\numberwithin{equation}{section}
\newtheorem{theorem}{Theorem}[section]
\newtheorem{lemma}[theorem]{Lemma}
\newtheorem{conjecture}[theorem]{Conjecture}
\newtheorem{corollary}[theorem]{Corollary}
\newtheorem{proposition}[theorem]{Proposition}
\newtheorem*{theorem*}{Theorem}
\newtheorem*{proposition*}{Proposition}
\theoremstyle{definition}
\newtheorem{remark}[theorem]{Remark}
\newcommand{\R}{{\mathbb R}}
\newcommand{\Z}{{\mathbb Z}}
\newcommand{\sbf}{{s}}
\newcommand{\bbf}{{b}}
\newcommand{\Try}{{\triangleleft}}
\date{\vspace{-9ex}}
\title{High-temperature scaling limit for directed polymers on a  hierarchical   lattice with bond disorder \vspace{.4cm}}
\date{  }
  \author{ \textbf{Jeremy Thane Clark}\footnote{ {\tt
jeremy@olemiss.edu}}\vspace{.1cm}  \\  University of Mississippi, Department of Mathematics   }
\providecommand{\keywords}[1]{\textbf{\textit{Keywords:}} #1}
\begin{document}
\maketitle

\begin{abstract} 
 Diamond ``lattices" are sequences of recursively-defined graphs that provide a network of directed pathways between two fixed root nodes, $A$ and $B$.  The construction recipe for diamond graphs  depends on a branching number $b\in \mathbb{N}$ and a segmenting number $s\in \mathbb{N}$, for which a larger value of the ratio $s/b$ intuitively corresponds to more opportunities for intersections between two randomly chosen paths.   By attaching  i.i.d.\ random variables to the bonds of the graphs, I construct a random Gibbs measure on the set of directed paths by assigning each path an ``energy" given by summing the random variables along the path.  For the case $b=s$, I propose a scaling regime in which the temperature grows along with the number of hierarchical layers of the graphs, and the  partition function (the normalization factor of the Gibbs measure) appears to converge in law. I prove that all of the positive integer moments of the partition function converge in this limiting regime.  The motivation of this work is to prove a functional limit theorem that is analogous to a previous result obtained in the $b<s$ case.

\end{abstract}

\keywords{disordered systems, hierarchical diamond lattice,
directed paths, partition function}

\section{Introduction}

 The phrase \textit{directed polymers in disordered environments} refers to a class of models for a randomized path whose probabilistic law is influenced by random local impurities scattered throughout the medium.  In this framework, there are two layers of randomness: the microscopic arrangement of the impurities in the environment forms the underlying layer and the path of the polymer through the medium, viewed as a random walk, forms the next layer.  The term \textit{directed} means that the polymer's path is partially restricted such that it maintains progress along a select axis while remaining free to wander  in its other spatial degrees of freedom.  This structure prevents the polymer from forming loops, and thus revisiting impurities.  An interesting but challenging question arises for these models in the limit that the polymer's length grows: are the environmental impurities essentially determinative of  the polymer's course through the medium or do they have a marginal, merely quantitative effect on the the polymer's statistics as a random walk?  These behavioral outcomes are referred to, respectively, as \textit{strong disorder} and \textit{weak disorder}.  The following  factors  modulate the effect of the impurities on the polymer's law:
\begin{enumerate}[(I)]

\item Increasing the temperature of the system decreases the influence of the  impurities.

\item Increasing the number of degrees of freedom available to the polymer weakens the influence of the  impurities.

\end{enumerate}
 (I) is directly equivalent to decreasing the strength of the impurities within the Gibbsian formalism of the model, and (II)  follows indirectly as an averaging effect generated by the greater number of available paths.   Classical results~\cite{Bolth,CometsII} in the field of disordered directed polymers show that when the  environmental impurities lie on the rectangular lattice $\Z_+\times \Z^d$, i.e., the  $(1+d)$-dimensional directed polymer, then for $d=1,2$  strong disorder behavior prevails for all finite temperatures $\beta^{-1}>0$, and for $d\geq 3$ there is a critical point temperature $\beta^{-1}_c\in (0,\infty)$ such that strong and weak disorder hold for temperatures below and above $\beta^{-1}_c$, respectively.  Formally, the critical temperature in the $d=1,2$ cases is $\beta^{-1}_c=\infty$ since weak disorder is present only at infinite temperature, $\beta=0$, in which disorder is completely absent.
  For a review of  results in the field of disordered directed polymers, see the recent book~\cite{CometsBook} by F.\ Comets.

From a probability perspective, one natural approach to these models is to search for white noise scaling limits in which the number of impurities (i.e., the system size) grows along with a counterbalancing effective decay in the strength of the individual impurities (controlled indirectly, for instance,  through a rising temperature).  In such a limiting regime, the random environmental impurities would be homogenized into a white noise field driving the limiting model.  A celebrated result of this type was obtained by Alberts, Khanin, and Quastel~\cite{AKQ} for the $(1+1)$-rectangular lattice  polymer.  Their work arrives at an interesting and conceptually important disordered continuum polymer that is formally related to the 1-d stochastic heat equation~\cite{AKQII}.   Caravenna, Sun, and Zygouras~\cite{CSZ1,CSZ3} recently introduced an analogous distributional limit theorem for  the $(1+2)$-polymer, which employs a  unified technique that is also applicable to  the $(1+1)$-polymer in a range of cases with heavy-tailed disorder variables.  Infinite temperature white noise limits of this type are not suitable  for the $(1+d)$-polymer when $d\geq 3$ since the critical point temperature $\beta_c^{-1}$ is finite.  

Although rectangular lattices are the most mathematically compelling graphical structures for studying  directed polymers in disordered enviornments--due, in part, to their limiting connection with the stochastic heat equation--, it is also interesting to explore analogous models on graphical structures that have contrasting characteristics, such as exact hierarchical symmetry.  The \textit{diamond hierarchical lattice} is one such toy structure that researchers have chosen  to grow their understanding of disordered polymers~\cite{Cook,lacoin,ACK} and a variety of other statistical mechanical phenomena such as pinning models~\cite{GLT, lacoin3}, resister networks~\cite{Spohn,Hambly,Goldstein}, diffusion on fractals~\cite{HamblyII}, and spin models~\cite{Griffiths}.  Diamond hierarchical lattices  are  sequences $\big\{D^{b,s}_n\big\}_{n\in \mathbb{N}}$ of recursively-defined finite graphs whose construction depends on a branching parameter $b\in \{2,3,\cdots\}$ and a segmenting parameter $s\in \{2,3,\cdots\}$ (see next section for  details). Lacoin and Moreno~\cite{lacoin} considered disordered polymers on the diamond lattice with disorder variables placed on the sites and proved that the polymers exhibit strong disorder for $s\geq b$ and weak disorder for $s<b$.  Thus,  the respective cases $s>b$, $s=b$, and $s<b$ are analogous to $d<2$, $d=2$, and $d>2$ of the rectangular lattice, and this correspondence can be understood heuristically  based on  the expected  number of sites shared by two randomly chosen directed paths; see the introduction of~\cite{lacoin}.     In~\cite{ACK} we studied an   infinite-temperature distributional scaling limit analogous to~\cite{AKQ}  for the partition function   of the diamond lattice polymer  in the case when $s>b$ and disorder variables are placed on either sites or bonds.   The techniques of~\cite{ACK} do not extend to proving a  limit theorem  for the partition function in the $s=b$ case, where it is relatively tricky to determine a plausible choice of  infinite-temperature scaling $\beta^{-1}\equiv \big(\beta_{n}^{b}\big)^{-1}\nearrow \infty$ as the generation, $n$, of the diamond graph $D^{b,b}_n$ grows.

 In this article I propose an infinite-temperature scaling for the $s=b$ diamond lattice polymer in the case of  bond disorder.   My main result is to prove that under this proposed limiting regime  the positive integer moments of the partition function  converge as the generation of the diamond graphs tends to infinity.  This extends results in~\cite{AC}, but falls short of proving a functional limit theorem for the partition function because the limiting moments increase at a super-factorial rate.  A similar analysis would likely apply to the analogous model with site disorder, but  messier estimates would be required.

\subsection{The model,  scaling limit, and main results}\label{SecMain}

\noindent \textbf{(a). Construction of the diamond graphs}\vspace{.2cm}\\
Fix a branching number $b\in \mathbb{N}$ and a segmenting number  $\sbf \in \mathbb{N}$.  The sequence of diamond graphs $D_{n}^{b,s}$ are defined inductively as follows:
\begin{itemize}
\item  $D_{0}^{b,s}$ is comprised of two  vertices, $A$ and $B$, and a single bond connecting them.

\item   $D_{n}^{b,s}$ is constructed from $D_{n-1}^{b,s}$  by replacing each bond on $D_{n-1}^{b,s}$ by a subgraph formed by
 $\bbf $ branches that are each split into $\sbf $ segments; see figure below.
\end{itemize}

\begin{center}
\includegraphics[scale=.6]{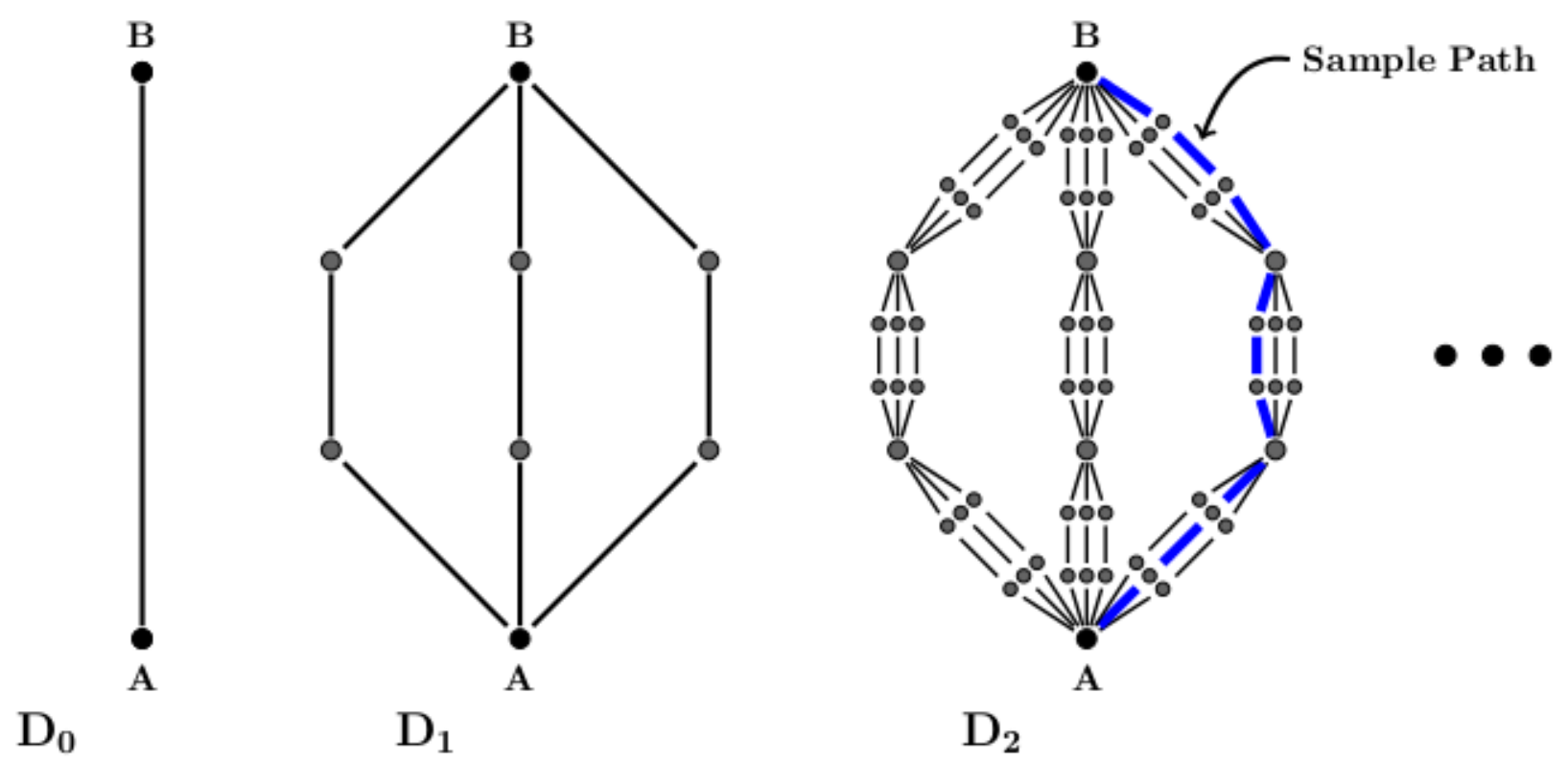}\\
\small The first three recursively-defined diamond graphs with $\bbf=3$  and $\sbf=3$.
\end{center}
A path on $D_{n}^{b,s}$ is said to be \textit{directed} if it begins at $A$ and moves monotonically to $B$ as its termination point.  The set of bonds and directed paths on $D_n^{b,s}$ are denoted by $E_n^{b,s}$ and $\Gamma_{n}^{b,s}$, respectively. \vspace{.4cm}

\noindent \textbf{(b). Randomized Gibbsian measure on paths}\vspace{.2cm}\\
Next let us build a statistical model by attaching  i.i.d.\ random variables $\omega_a$ to each bond $a\in E_n^{b,s}$ of the diamond graph $D_n^{b,s}$ with the following standard assumptions on the variables: mean zero, variance one, and finite exponential moments, $  \mathbb{E}[ e^{\beta \omega_{a}  }  ]$,  for sufficiently small $\beta>0$.   The  ``energy" assigned to a directed path $p\in \Gamma_n^{b,s}$ is the random quantity defined by
$$ H_{n}^{\omega}(p) \, :=  \, \sum_{a\Try p} \omega_{a}\,,$$
where the summation is over all bonds, $a$, lying along the path $p$. Given an inverse temperature value $\beta\in [0,\infty)$,  the Gibbs formalism defines a random probability measure on paths
$$\mu^{(\omega)}_{\beta, n}(p) \,   = \, \frac{  e^{\beta  H_{n}^{\omega}(p) }    }{  Z_{n}^{\omega}(\beta)   } \,  ,   $$
where the \textit{partition function}, $ Z_{n}^{\omega}(\beta)$,  normalizes the  measure: 
$$Z_{n}^{\omega}(\beta) \, := \,\sum_{p\in \Gamma_{n}^{b,s}  }e^{\beta  H_{n}^{\omega}(p) } \, .
     $$

My focus will be on the distributional behavior of the random variables $Z_{n}^{\omega}(\beta)$  in joint limits in which  the temperature $\beta^{-1} \equiv \beta_n^{-1}$ grows along with the number, $n$, of hierarchical layers of the system.  It is convenient to frame this analysis in terms of a normalized version of the partition function:
\begin{align}\label{ExpW}
W_{n}(\beta)\, := \, \frac{  Z_{n}^{\omega}(\beta)  }{  \mathbb{E}\big[  Z_{n}^{\omega}(\beta)   \big] }\, = \, \frac{1}{|\Gamma_{n}^{b,s}|  }\sum_{p\in \Gamma_{n}^{b,s}  }\prod_{a\Try p} \frac{e^{\beta  \omega_{a}  } }{ \mathbb{E}[e^{\beta  \omega_{a}  } ]} \,.
\end{align}

\vspace{.4cm}

\noindent \textbf{(c). Distributional and variance recursion relations}\vspace{.2cm}\\
From the constructive definition of the diamond graphs, it follows that $D_{n+1}^{b,s}$ is built from $b\cdot s$ embedded copies of $D_n^{b,s}$.  Hence if $W_{n}^{(i,j)}(\beta)$ are i.i.d.\ copies of $W_{n+1}(\beta)$, then I have the distributional recurrence  relation
\begin{align}\label{Induct}W_{n+1}(\beta)  \, \stackrel{ d }{ =}  \,  \frac{1}{b}\sum_{i=1}^{b} \prod_{j=1}^{s}  W_{n}^{(i,j)}(\beta) \hspace{1cm}\text{with}\hspace{1cm}   W_{0}(\beta)\,\stackrel{d}{=}\,\frac{e^{\beta\omega}}{\mathbb{E}\big[e^{\beta\omega}\big]}     \, .
\end{align}
Since the variables $W_{n}(\beta)$ have mean one, the above implies that the variances $\varrho_n(\beta):=\textup{Var}\big(W_{n}(\beta)\big)$ are related through 
\begin{align}\label{RecurVar}
\varrho_{n+1}(\beta)=M_{b,s}\big(\varrho_n(\beta)\big) \hspace{1cm} \text{where}\hspace{1cm}   M_{b,s}(x)\,:=\,\frac{1}{b}\Big[(1+x)^s\,-\,1   \Big] \,.  
\end{align}   
By induction,
\begin{align}\label{VarIterative}
\varrho_n(\beta)\,=\, M_{b,s}^n(V_\beta) \hspace{1cm}\text{for}\hspace{1cm} V_{\beta}:= \textup{Var}\big(  W_{0}(\beta)\big)  \,, 
\end{align} 
where $M_{b,s}^n$ refers to the $n$-fold composition of the polynomial maps $M_{b,s}:[0,\infty)\rightarrow [0,\infty)$. 
Notice that $x=0$ is a fixed point for  $M_{b,s}$ and that for $0<x\ll 1$
\begin{align}\label{TheEm}
 M_{b,s}(x)\,=\,  \begin{cases} \frac{s}{b}x +\mathcal{O}(x^2) & \quad  s\neq b\,,  \\  x+\frac{b-1}{2}x^2+\frac{(b-1)(b-2)}{6}x^3+\mathit{O}(x^4)     & \quad  s=b\,.  \end{cases}  
 \end{align}
Thus, the fixed point at zero is linearly repelling for $s>b$, marginally repelling for $s=b$, and linearly attracting for $s<b$.     
  
\vspace{.4cm}

\noindent \textbf{(d). Tuning high-temperature scaling limits through the variance}\vspace{.2cm}\\
To briefly discuss plausible large $n$ scaling limits   for the random variables $W_{n}(\beta)$  with vanishing inverse temperature   $\beta\equiv \beta_{n}^{b,s}\searrow 0$, let us consider  asymptotics for $\beta_{n}^{b,s}$ that are fine-tuned with a  decay rate  such that the variances $\varrho_n\big(\beta_{n}^{b,s}\big)$ converge.    In~\cite{ACK}, it was  shown that when $s>b$, the sequence $W_{n}\big(\beta_{n,r}^{b,s}\big)$ converges in law with large $n$ for any fixed  value of the parameter $r\in \R_+$, where
\begin{align}\label{BetaFormb<s}
\beta_{n,r}^{b,s}=\sqrt{r}\Big(\frac{b}{s}  \Big)^{n/2}\,+\,\mathit{o}\bigg(\Big(\frac{b}{s}  \Big)^{n/2}\bigg)\,.
\end{align}
This is plausible given~(\ref{VarIterative}) and~(\ref{TheEm}) since with $n\gg 1$
\begin{align}\label{XFormb<s}
V_{\beta_{n,r}^{b,s}}\,:=\, \textup{Var}\Bigg( \frac{ e^{\beta_{n,r}^{b,s}\omega}}{\mathbb{E}\big[e^{\beta_{n,r}^{b,s}\omega}\big] }\Bigg) \,=\, r\Big(\frac{b}{s}\Big)^{n}+\mathit{o}\bigg(\Big(\frac{b}{s}\Big)^{n}\bigg)  \,,
\end{align}
and  the variance $\varrho_n\big(\beta_{n,r}^{b,s}\big)$ converges to a limit function $R_{b,s}(r)$ satisfying $ M_{b,s}\big( R_{b,s}(r)  \big)=R_{b,s}(\frac{s}{b}r)   $.  The scaling~(\ref{BetaFormb<s}) makes use of the linearly repelling fixed point of the map $M_{b,s}$ at $x=0$ with $M'(0)=s/b>1$ when $s>b$.  The point $x=0$ is an attractor when $s<b$, so it is nonsensical to look for a high-temperature scaling limit in that case.

The case $b=s$, for which  $x=0$ is marginally repelling for the map $M_{b,s}$, requires a more intricate scaling than the $b<s$ case.
\begin{itemize}
\item For $b\in \{2,3,4,\cdots\}$, let the constants $\kappa_b,\eta_b \in \R_+$ be defined as
$$ \kappa_{b}:=\sqrt{\frac{2}{b-1}}  \hspace{1.2cm} \text{and}\hspace{1.2cm} \eta_{b}:=\frac{b+1}{3(b-1) }\,.$$
\item For a fixed parameter value $r\in \R$, define $\big(\beta_{n, r}^{(b)}\big)_{n\in \mathbb{N}}$ to be a sequence in $\R_+$ with the $n\gg 1$ asymptotic form
\begin{align}\label{BetaForm}
\beta_{n, r}^{(b)}\, :=\,  \frac{\kappa_{b}}{\sqrt{n}}\,-\,\frac{ \tau\kappa_{b}^2 }{2n}\,+\,\frac{\kappa_{b}\eta_{b}\log n}{n^{\frac{3}{2}}}\,+\,\frac{\kappa_{b}r}{n^{\frac{3}{2}}}\, +\,\mathit{o}\Big( \frac{1}{n^{\frac{3}{2}}} \Big) \,,
\end{align}
where $  \tau :=\mathbb{E}[\omega_a^3]$ (the skew of $\omega_a$).  In this case,
 \begin{align}\label{VForm}
V_{\beta_{n,r}^{(b)}}\,:=\,\textup{Var}\Big(  W_{n}\big(\beta_{n,r}^{(b)}\big) \Big)\,=\, \textup{Var}\Bigg( \frac{ e^{ \beta_{n, r}^{(b)}\omega}}{\mathbb{E}\big[e^{\beta_{n,r}^{(b)}\omega}\big] }\Bigg) \,=\, \kappa_b^2  \bigg(\frac{ 1  }{n}   \,+\, \frac{  \eta_b \log n  }{n^2}  \,+\, \frac{  r}{n^2} \bigg)  \,+\,\mathit{o}\Big(\frac{1}{n^2}  \Big)  \,.
\end{align}

\end{itemize}
The $\tau$ term  is needed in~(\ref{BetaForm}) so that it disappears from~(\ref{VForm}).

With~(\ref{VForm}), the  following lemma implies that the variance, $\rho_{n}\big(\beta_{n,r}^{(b)}\big)  =M^{n}_b\big(V_{\beta_{n,r}^{(b)}}\big)  $, of $W_{n}(\beta_{n,r}^{b,s})$ converges as $n\rightarrow\infty$.  The proof is based on elementary estimates; see Section~\ref{SecVariance}.

\begin{lemma}\label{LemVar} Assume $s=b$, and  for a fixed value of the parameter $r\in \R$, let the sequence $(X^{(n,r)})_{n\in \mathbb{N}} $ satisfy the large $n$ asymptotics 
\begin{align}
 X^{(n,r)}  =  \kappa_{b}^2 \bigg(\frac{ 1 }{n}  +\frac{ \eta_{b}\log n}{n^2}+\frac{r}{n^2}\bigg)\,+\,\mathit{o}\Big(\frac{1}{n^2}  \Big) \, .\label{Assumption}
\end{align}
Then there exists a function $R_b:\R\rightarrow \R_+$ such that 
\begin{align}\label{LimitAssump}
   M^{n}_b\big(X^{(n,r)}   \big)   \,  \stackrel{n\rightarrow \infty  }{\longrightarrow} \, R_b(r) \, .
\end{align}
The convergence is uniform over  bounded intervals in $r\in \R$ provided that the error term $\mathit{o}(1/n^2  )$ in~(\ref{Assumption}), regarded as a function of $r$, is uniformly controlled over bounded intervals.        The  additional properties below hold for  the function $R_b$.
\begin{enumerate}[(I).]
\item Composition with the map $M_b$ translates the parameter $r$:
   $$ M_b\big(R_b(r)\big)\,=\, R_b(r+1) $$
\item  $R_b(r)$ is continuously differentiable with derivative satisfying
 $$ \frac{d}{dr}R_b(r)\,=\,\lim_{n\rightarrow \infty} \frac{\kappa_b^2}{n^2}\prod_{k=1}^n \Big(1+ R_b\big(r-k\big)\Big)^{b-1}\,. $$

\item As $r\rightarrow \infty$, $R_b(r)$ grows super-exponentially.  As $r\rightarrow -\infty$, $R_b(r)$ vanishes with the asymptotics 
$$  R_b(r)\,=\,-\frac{ \kappa_b^2 }{ r }\,+\, \frac{ \kappa_b^2\eta_b\log(-r) }{ r^2 }\,+\,\mathit{O}\Big( \frac{1}{r^3} \Big)  \,. $$
\end{enumerate}

\end{lemma}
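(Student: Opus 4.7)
The plan is to analyze the iteration through the inverse substitution $y_k := 1/M_b^k(X^{(n,r)})$, which linearizes the marginally repelling dynamics of $M_b$ near the fixed point $x=0$. Using the Taylor expansion $M_b(x)=x+\tfrac{b-1}{2}x^{2}+\tfrac{(b-1)(b-2)}{6}x^{3}+O(x^{4})$, one derives the additive recursion
\begin{align*}
y_{k+1}\,=\,y_{k}\,-\,\tfrac{b-1}{2}\,+\,\tfrac{(b-1)(b+1)}{12\,y_{k}}\,+\,O(1/y_{k}^{2}),
\end{align*}
and inverting the hypothesis on $X^{(n,r)}$ (using $\kappa_b^{2}=2/(b-1)$ together with the identity $(b-1)\eta_b/2=(b+1)/6$) produces the initial value
\begin{align*}
y_{0}\,=\,\tfrac{(b-1)n}{2}\,-\,\tfrac{b+1}{6}\log n\,-\,\tfrac{(b-1)r}{2}\,+\,o(1).
\end{align*}
The specific choice of $\eta_b$ in the scaling is what makes the $\log n$-coefficient of $y_0$ equal to $(b+1)/6$; as I show below, this exactly matches the coefficient generated by the $1/y_k$ term in the recursion and yields the crucial cancellation.

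Fix an auxiliary integer $m\geq 1$. A straightforward induction first establishes the a priori bound $y_k\asymp n-k$ for $0\leq k\leq n-m$, since the dominant decrement per step is $-(b-1)/2$. Summing the $y$-recursion over this range gives
\begin{align*}
y_{n-m}\,=\,y_{0}\,-\,\tfrac{(b-1)(n-m)}{2}\,+\,\tfrac{(b-1)(b+1)}{12}\sum_{k=0}^{n-m-1}\tfrac{1}{y_{k}}\,+\,O(1/m),
\end{align*}
with the $O(1/m)$ collecting the summed $O(1/y_k^2)$ remainders. Replacing $y_k$ by $(b-1)(n-k)/2$ evaluates the harmonic-style sum as $\tfrac{2}{b-1}\log(n/m)+O(1)$, whose prefactor $\tfrac{(b-1)(b+1)}{12}$ multiplies to exactly $\tfrac{b+1}{6}$, matching the $\log n$ term in $y_0$. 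After cancellation,
\begin{align*}
\widetilde{y}_m(r)\,:=\,\lim_{n\to\infty}y_{n-m}\,=\,\tfrac{b-1}{2}(m-r)\,-\,\tfrac{b+1}{6}\log m\,+\,O(1/m),
\end{align*}
uniformly in $r$ on bounded intervals provided the $o(1/n^{2})$ in the hypothesis is uniform. Hence $M_b^{n-m}(X^{(n,r)})\to 1/\widetilde{y}_m(r)$, and by continuity of the polynomial $M_b^m$,
\begin{align*}
M_b^{n}(X^{(n,r)})\,=\,M_b^{m}\!\left(M_b^{n-m}(X^{(n,r)})\right)\,\xrightarrow{\,n\to\infty\,}\,M_b^{m}\!\left(1/\widetilde{y}_m(r)\right).
\end{align*}
Since the left-hand side is independent of $m$, the right-hand side must be constant in $m$; I define $R_b(r)$ to be this common value.

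For property (I), the direct expansion $X^{(n,\,r+1)}=X^{(n-1,\,r)}+o(1/n^{2})$ combined with the perturbation estimate $(M_b^{n})'(x)=\prod_{k=0}^{n-1}(1+M_b^{k}(x))^{b-1}=O(n^{2})$ along the orbits in question (a direct consequence of $y_k\asymp n-k$) shows that the $o(1/n^{2})$ discrepancy is absorbed after $n$ iterations, giving $R_b(r+1)=M_b(R_b(r))$. For (II), differentiating $T_n(r):=M_b^{n}(X^{(n,r)})$ in $r$ gives
\begin{align*}
T_n'(r)\,=\,\bigl[\tfrac{\kappa_b^{2}}{n^{2}}+o(\tfrac{1}{n^{2}})\bigr]\prod_{j=1}^{n}\bigl(1+M_b^{n-j}(X^{(n,r)})\bigr)^{b-1};
\end{align*}
invoking $M_b^{n-j}(X^{(n,r)})\to R_b(r-j)$ as $n\to\infty$ for each fixed $j$ (the case $m=j$ of the previous paragraph), together with uniform control of the tails using $R_b(r-j)\sim\kappa_b^{2}/j$ — so that $(1+R_b(r-j))^{b-1}=1+2/j+o(1/j)$ and the product is $\Theta(n^{2})$ — justifies the interchange of limit and derivative and produces the stated formula. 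For (III) at $s\to-\infty$: injectivity of $M_b^{m}$ on $[0,\infty)$ applied to the identity $M_b^{m}(R_b(s))=R_b(s+m)$, combined with $R_b(r)=M_b^{m}(1/\widetilde{y}_m(r))$, yields $R_b(s)=1/\widetilde{y}_{r-s}(r)$ for any fixed reference $r$, and expanding this rational expression in powers of $1/(r-s)$ gives $R_b(s)=-\kappa_b^{2}/s+\kappa_b^{2}\eta_b\log(-s)/s^{2}+O(1/s^{3})$ with the $r$-dependent contributions canceling through the displayed orders. For $r\to+\infty$, iterating $R_b(r+m)=M_b^{m}(R_b(r))$ with $M_b(x)\sim x^{b}/b$ for $x$ large yields $b^{m}$-th-power growth, hence the stated super-exponential behavior.

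The principal obstacle is the sharp bookkeeping in the second paragraph: the $O(1/y_k^2)$ remainders in the $y$-recursion must be controlled across $\sim n$ iterations so that their summed effect stays $O(1/m)$, and — simultaneously — the harmonic sum's $\log n$ contribution must cancel precisely against the $-\tfrac{b+1}{6}\log n$ in $y_0$. It is this exact cancellation that forces the choice $\eta_b=(b+1)/(3(b-1))$ in the scaling (\ref{BetaForm}); any deviation would leave a residual $\log n$ in the limit and destroy convergence.
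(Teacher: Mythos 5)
Your substitution $y_k=1/M_b^k(X^{(n,r)})$ is, up to the constant $\kappa_b^2$, exactly the paper's inverted variable $s_j^{(n,r)}=\kappa_b^2/M_b^j(X^{(n,r)})$, and your mechanism --- a unit drift per step (in normalized variables) plus a $1/y_k$ correction whose summed logarithm cancels the $\eta_b\log n$ built into the initial condition --- is precisely the mechanism of the paper's proof. The organizational difference (you stop the backward iteration at $k=n-m$ and push forward with the fixed polynomial $M_b^m$, whereas the paper runs the iteration to $k=n$, encodes the relation in a function $G_b$, inverts it for $-r$ large, and then extends to all $r$ by composing with $M_b^k$) is minor.

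There is, however, a genuine gap at the step you yourself flag: the existence of $\widetilde y_m(r)=\lim_{n\to\infty}y_{n-m}$. As written, you evaluate the harmonic-type sum by substituting $y_k\approx (b-1)(n-k)/2$, which determines the sum only to within $O(1)$; an $O(1)$ error neither yields the claimed $O(1/m)$ expression for $\widetilde y_m(r)$ nor, more importantly, shows that the limit over $n$ exists at all --- it is consistent with $y_{n-m}$ fluctuating by order one. The true deviations $y_k-\tfrac{b-1}{2}(n-k)$ are of size $\log(n-k)+|r|$, so the substitution error in each term is of order $\log(n-k)/(n-k)^2$, and proving that the accumulated error \emph{converges} (not merely stays bounded) as $n\to\infty$, uniformly for $r$ in bounded intervals, is the real analytic content of the lemma. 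This is exactly what the paper supplies: the exact telescoping-logarithm insertion leading to (\ref{Heffle})--(\ref{Yaper}), together with the quantitative estimates of Lemma~\ref{LemEstimates} (the bound $|M_b^{-n}(x)-x/(1+\tfrac{n}{\kappa_b^2}x)|\le C\log(1+nx)/n^2$, the summability of $\sum_k (M_b^{-k}(x))^2$, and the derivative bound), which turn the correction sum into a convergent series $F_b$ with a controlled tail and also justify the a priori two-sided control of the orbit that you assert as ``$y_k\asymp n-k$ by straightforward induction.'' The same quantitative input is what legitimizes your interchange of limit and derivative in property (II) (the paper does this via part (iv) of Lemma~\ref{LemEstimates} and uniform convergence). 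So the blueprint coincides with the paper's, but the decisive estimates are asserted rather than proved, and the displayed $O(1)$ bookkeeping is not strong enough to carry them.
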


 \begin{remark}\label{Remark} It is interesting that the initial variance scaling~(\ref{VForm}) involves three important terms rather than one as in~(\ref{XFormb<s}).   Since $\varrho_n\big(\beta_{n,r}^{(b)}\big)$ is convergent with large $n$,
  it follows that
   $$\varrho_n\bigg( \frac{t}{\sqrt{n}} \,+\,\mathit{o}\Big(\frac{1}{\sqrt{n}}\Big)  \bigg) \,=\,\begin{cases} 0   & \hspace{1cm}  0\leq t\leq \kappa_b \,, \\ \infty   & \hspace{1cm} \kappa_b < t  \,, \end{cases}\,  $$
and furthermore, if $\tau=0$, 
  $$\varrho_n\bigg( \frac{\kappa_b}{\sqrt{n}} \,+\,\frac{t \log n}{n^{\frac{3}{2}}} \,+\, \mathit{o}\Big(\frac{\log n}{n^{\frac{3}{2}}}\Big)    \bigg)\,=\,\begin{cases} 0   & \hspace{1cm}  0\leq t < \kappa_b\eta_b \,, \\ \infty   & \hspace{1cm} \kappa_b\eta_b < t  \,.\end{cases}  $$

 \end{remark}

\vspace{.4cm}

\noindent \textbf{(e). A theorem and a conjecture}\vspace{.2cm}\\
The following theorem is the main result of this article and concerns the moment behavior of the normalized partition function as $n\nearrow \infty$ when the inverse temperature is taken to be $\beta \equiv\beta_{n, r}^{(b)}$. Recall that the variables $W_{n}(\beta)$  have expectation $1$.

\begin{theorem}[Convergence of moments]\label{ThmMain}
Pick $b\in \{2,3,4,\cdots\}$ and assume $s=b$.    Define $Y_n^{(r)}=W_{n} \big(\beta_{n, r}^{(b)} \big)$ for $r\in \R$, $n\in \mathbb{N}$, and  $\beta_{n,r}^{(b)}$ as in~(\ref{BetaForm}).  As $n\rightarrow \infty$ the centered positive integer moments converge:
 $$ \mathbb{E}\Big[ \big(Y_n^{(r)}-1\big)^m\Big]\quad \longrightarrow \quad  R_{b}^{(m)}(r)\in \R_+\,.$$ 

The limiting moment functions  $R_{b}^{(m)}(r)$ satisfy the following properties:
\begin{enumerate}[(I).]

\item  For all $r\in \R$,
$$  R_{b}^{(2)}(r+1)\,=\,M_b\big(R_{b}^{(2)}(r)   \big)\,:=\, \frac{1}{b}\Big[\big(1+R_{b}^{(2)}(r)\big)^{b}-1\Big] \,. $$

\item  More generally, there exist  multivariate polynomials $P_m: \R^{m-1}\rightarrow \R$ of degree $b\cdot \min\big(b, \lfloor m/2\rfloor\big)$  with nonnegative coefficients such that for all $r\in \R$
$$ R_{b}^{(m)}(r+1)\,=\, P_m\big(R_{b}^{(2)}(r), R_{b}^{(3)}(r), \cdots, R_{b}^{(m)}(r)   \big)     \,. $$

\item  As $r\searrow -\infty$,
\begin{align*}
R_{b}^{(m)}(r)\,=\,\begin{cases}\kappa_b^{m}\frac{ m!  }{ 2^{\frac{m}{2}}(\frac{m}{2})! }|r|^{-\frac{m}{2}} \,+\, \mathit{O}\left(  |r|^{-\frac{m}{2}-1} \right)   & \quad \text{$m$ even,} \vspace{.2cm}  \\ \mathit{O}\Big(  |r|^{-\frac{m+1}{2}} \Big)    & \quad  \text{$m$ odd.}  \end{cases} 
\end{align*}
In words, the leading term of the the function $R_{b}^{(m)}(r)$ as $r\searrow -\infty$ agrees with the $m^{th}$ moment of a normal distribution with  mean zero and variance $\kappa_b^2/|r|$.

\end{enumerate}

\end{theorem}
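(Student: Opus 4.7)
My first move is to derive a polynomial recursion for the centered moments $\varrho_n^{(m)}(\beta) := \E\big[(W_n(\beta)-1)^m\big]$. Setting $V_n^{(i,j)} := W_n^{(i,j)}(\beta)-1$, the distributional recursion~(\ref{Induct}) with $s=b$ rewrites as $W_{n+1}(\beta)-1 \stackrel{d}{=} \tfrac{1}{b}\sum_{i=1}^b \xi_n^{(i)}$, where $\xi_n^{(i)} := \prod_{j=1}^b(1+V_n^{(i,j)})-1$ are i.i.d., mean zero. The multinomial theorem, combined with independence across the outer $i$ index and the inner $j$ index within each factor, kills every multi-index $(k_1,\ldots,k_b)$ having some $k_i = 1$, so only entries $k_i\in\{0\}\cup\{\geq 2\}$ contribute. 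Each surviving factor $\E[(\xi_n^{(i)})^{k_i}]$ expands into a polynomial of ordinary degree $\leq b$ in $\varrho_n^{(2)},\ldots,\varrho_n^{(k_i)}$ with non-negative coefficients (via $\E[(1+V)^l] = 1 + \sum_{p\geq 2}\binom{l}{p}\varrho_n^{(p)}$), and the constraint $\sum k_i = m$ with nonzero $k_i \geq 2$ bounds the number of nonzero slots by $\min(b,\lfloor m/2\rfloor)$. This gives the polynomial recursion
\begin{align*}
\varrho_{n+1}^{(m)}(\beta) \,=\, F_m\bigl(\varrho_n^{(2)}(\beta),\ldots,\varrho_n^{(m)}(\beta)\bigr)
\end{align*}
with non-negative coefficients and total degree $\leq b\min(b,\lfloor m/2\rfloor)$; the base $m=2$ recovers $F_2 = M_b$.

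\textbf{Induction on $m$ with a shifted-parameter identity.} The key auxiliary fact is a \emph{scaling compatibility} observation: Taylor expanding $1/(n+1) = 1/n - 1/n^2 + o(n^{-2})$ and $\log(n+1)/(n+1)^2 = \log n/n^2 + o(n^{-2})$ around $n$ shows that $V_{\beta_{n+1,r}^{(b)}}$, viewed as a sequence indexed by $n$, satisfies the ansatz~(\ref{Assumption}) with parameter $r-1$ (the $-1/n^2$ contribution absorbs exactly one unit of $r$). I would induct on $m$, strengthening the hypothesis to: for any admissible sequence $(\beta_n^*)$ with parameter $r^*$, $\varrho_n^{(k)}(\beta_n^*) \to R_b^{(k)}(r^*)$ for $k < m$, uniformly on compact $r^*$-intervals. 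The base $m=2$ is Lemma~\ref{LemVar}. In the inductive step, the level-$(n+1)$ recursion at $\beta = \beta_{n+1,r}^{(b)}$ reduces, via scaling compatibility and the hypothesis applied at parameter $r-1$, to controlling the single self-referential slot $\varrho_n^{(m)}(\beta_{n+1,r}^{(b)})$ appearing on the right-hand side.

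\textbf{Main obstacle and completion.} The principal challenge is handling this $m$-th feedback. The structural lever is that the linear coefficient of $\varrho^{(m)}$ in $F_m$ at the origin equals $b^{-(m-2)}$ (from the single-slot multi-index $k_i=m$ combined with the $b\varrho^{(m)}$ linear term of $\E[\xi_n^m]$, producing $\tfrac{b\cdot b}{b^m}$), which is strictly less than $1$ for $m\geq 3$. To bootstrap the induction I would first establish convergence for $r$ sufficiently negative, where the limiting lower moments $R_b^{(k)}(r-1)$ are small by property~(III) on lower orders (known inductively); here $F_m$ behaves as a contraction in its final variable with a small inhomogeneous source, and a Banach/contraction argument yields a limit $R_b^{(m)}(r^*)$ for very negative $r^*$. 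The convergence then propagates to all $r\in\R$ by iteratively applying the limiting recursion $R_b^{(m)}(r+1) = F_m\bigl(R_b^{(2)}(r),\ldots,R_b^{(m)}(r)\bigr)$, giving property~(I); property~(II) is inherited directly from the polynomial form and the degree bound of Step~1. Finally, property~(III) is verified via a matched-asymptotic ansatz $R_b^{(m)}(r) \sim c_m |r|^{-m/2}$ (for even $m$) substituted into the linearized recursion in the small-$R$ regime, solved order by order using Lemma~\ref{LemVar}(III) for $R_b$; the Gaussian combinatorial factor $\tfrac{m!}{2^{m/2}(m/2)!}$ should emerge from the leading structure of $F_m$. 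The hardest part will be making the contraction argument uniform in $n$ and ensuring that the $o(n^{-2})$ scaling errors propagate controllably through the iterated polynomial recursion across a uniform range of parameter values.
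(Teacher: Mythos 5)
Your proposal follows essentially the same route as the paper: the same centered-moment recursion with the structural decomposition $\tfrac{1}{b^{m-2}}y_m + y_m U_m + V_m$, nonnegative coefficients and degree bound (Proposition~\ref{PropPoly}), convergence obtained by exploiting the contraction factor $b^{-(m-2)}$ in the $m$-th slot in a small-moment regime and then extending to all $r$ via the parameter-shift/recursion identity (the paper packages exactly this as the composed maps $F_n^{(x)}$ of Lemma~\ref{LemFunGeneral} and Theorem~\ref{ThmMomConv}), and the Gaussian asymptotics extracted from the leading generating-function structure of $V_m$ together with the geometric damping (Theorem~\ref{ThmAsy}). The uniform-in-$n$ control you flag as the hardest part is precisely what the paper supplies through the $\epsilon$-ball invariance argument in Lemma~\ref{LemFunGeneral} and the uniform moment bound of Lemma~\ref{LemMomBounds}.
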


\begin{remark}
The function $R_{b}^{(2)}(r)$ in the statement of Theorem~\ref{ThmMain} is the same as $R_{b}(r)$ in Lemma~\ref{LemEstimates}.

\end{remark}

For any fixed $r\in \R$ the sequence $\big\{R_{b}^{(m)}(r)\big\}_{m\in \mathbb{N}}$ of limiting moments grows at a rate significantly faster than the factorial of $n$, and thus the above theorem does not suffice to prove the following obvious conjecture.

\begin{conjecture}[Functional convergence]\label{Conj} Under the assumptions of Theorem~\ref{ThmMain}, there is convergence in law as $n\rightarrow \infty$
$$ W_{n}\big(\beta_{n,r}^{(b)}\big)\hspace{1cm}\Longrightarrow \hspace{1cm} L_{r}^{(b)}    $$
to a family of limit distributions $\big\{L_{r}^{(b)}\big\}_{r\in \R}$ satisfying the following:
\begin{enumerate}[(I).]

\item Let $\mathbf{W}_{r}$ be a random variable with distribution $L_{r}^{(b)}$. Then $\mathbf{W}_{r}$ has mean $1$ and the centered variables $  \sqrt{-r} (\mathbf{W}_{r}-1) $ converge in law as $r\rightarrow -\infty$  to a mean zero normal with variance $\kappa_b^2$.

\item  If $\mathbf{W}^{(i,j)}_r$ are independent variables with distribution $L_{r}^{(b)}$, then there is equality in distribution
\begin{align}\label{Simul}
\mathbf{W}_{r+1} \,\stackrel{d}{=}\,\frac{1}{b}\sum_{1\leq i\leq b}  \prod_{1\leq j\leq b} \mathbf{W}^{(i,j)}_r   \,.    
\end{align}

\end{enumerate}

\end{conjecture}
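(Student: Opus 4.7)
My plan is strong induction on the moment order $m$, with induction hypothesis that $m_n^{(k)}(r):=\mathbb{E}\big[(Y_n^{(r)}-1)^k\big]$ converges to a continuous function $R_b^{(k)}(r)$ uniformly on compact $r$-intervals, for each $k<m$. The case $m=1$ is immediate from $\mathbb{E}[Y_n^{(r)}]=1$, and $m=2$ is exactly Lemma~\ref{LemVar} (giving $R_b^{(2)}=R_b$ and property~(I)).

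For the inductive step, I would feed~(\ref{Induct}) with $s=b$ into the $m$th centered moment: letting $X_n^{(i,j)}$ denote i.i.d.\ copies of $W_n\big(\beta_{n+1,r}^{(b)}\big)$,
\begin{align*}
Y_{n+1}^{(r)}-1\,\stackrel{d}{=}\,\frac{1}{b}\sum_{i=1}^{b}\bigg(\prod_{j=1}^{b}X_n^{(i,j)}\,-\,1\bigg).
\end{align*}
Expanding the $m$th power via the multinomial theorem together with $\prod_{j}X^{(i,j)}-1=\sum_{\emptyset\neq S\subseteq\{1,\ldots,b\}}\prod_{j\in S}(X^{(i,j)}-1)$, taking expectations, and using independence yields
\begin{align*}
m_{n+1}^{(m)}(r)\,=\,P_m\big(m_n^{(2)}(\tilde r_n),\,m_n^{(3)}(\tilde r_n),\,\ldots,\,m_n^{(m)}(\tilde r_n)\big)\,+\,\varepsilon_n(r),
\end{align*}
where $P_m$ is a polynomial with nonnegative integer coefficients of degree at most $b\cdot\min(b,\lfloor m/2\rfloor)$---delivering property~(II) at the level of the limit---and $\tilde r_n\to r-1$ by matching $V_{\beta_{n+1,r}^{(b)}}$ with $V_{\beta_{n,r-1}^{(b)}}$ via~(\ref{VForm}). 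The $\tau$ and $\eta_b\log n$ corrections in~(\ref{BetaForm}) are exactly what force this shift to be exact to $o(n^{-2})$. The error $\varepsilon_n(r)\to 0$ uniformly on compact sets by Lemma~\ref{LemVar} and the inductive hypothesis.

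I then construct the candidate limit $R_b^{(m)}$ as the solution to $R_b^{(m)}(r+1)=P_m(R_b^{(2)}(r),\ldots,R_b^{(m)}(r))$ with the boundary condition at $r\searrow-\infty$ of property~(III) (the $m$th moment of a centered Gaussian of variance $\kappa_b^2/|r|$). Existence and uniqueness come from a backward Picard scheme: $P_m$ contains the linear term $\tfrac{1}{b^{m-2}}x_m$ (arising from the unique multinomial/subset combination producing a lone $(X^{(i,j)}-1)^m$), and since $1/b^{m-2}<1$ for $m\geq 3$, backward iteration becomes a contraction whenever the lower $R_b^{(k)}$ are small---which by property~(III) is the regime $r\ll 0$. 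Property~(III) itself drops out by substituting the known $R_b^{(2)}(r)\sim\kappa_b^2/|r|$ from Lemma~\ref{LemVar}~(III) into the polynomial recursion and matching leading orders in $1/|r|$; the Isserlis/Wick pair-partition combinatorics of $P_m$ reproduce the Gaussian prefactor $m!/(2^{m/2}(m/2)!)$ for even $m$, with odd moments smaller by one power of $|r|$ thanks to the vanishing odd Gaussian moments.

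Convergence $m_n^{(m)}(r)\to R_b^{(m)}(r)$ then follows by studying $\delta_n(r):=m_n^{(m)}(r)-R_b^{(m)}(r)$. Subtracting the two recursions gives $\delta_{n+1}(r)=\Lambda_n(r)\,\delta_n(\tilde r_n)+e_n(r)$, where $\Lambda_n(r)$ is an effective linearization coefficient around the trajectory and $e_n(r)\to 0$ from the inductive hypothesis combined with $\varepsilon_n\to 0$. Iterating $K$ steps backward pushes $r\mapsto r-K$ into a region where the lower $R_b^{(k)}$ are small, so $\Lambda_n\to 1/b^{m-2}<1$; combined with a uniform a priori bound on $m_n^{(m)}$ over compact intervals (obtained by directly dominating $P_m$ via the controlled lower moments), a Gronwall-type estimate forces $\delta_n(r)\to 0$. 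The main obstacle is this contractivity: because $P_m$ is genuinely nonlinear in its top slot (e.g.\ a $(x_m)^b$ term appears when all $c_j=m$ are simultaneously attainable from the product structure in~(\ref{Induct})), the effective $\Lambda$ is not globally below one, so one must seed the argument in the contractive $r\to-\infty$ regime and transport the conclusion forward. A secondary technical nuisance is the sharp bookkeeping of the $o(n^{-3/2})$ errors in the identification $\beta_{n+1,r}^{(b)}\leftrightarrow\beta_{n,r-1}^{(b)}$, which must be uniform in $r$ on compacts to justify replacing $m_n^{(k)}(\tilde r_n)$ by $m_n^{(k)}(r-1)$ in the inductive step.
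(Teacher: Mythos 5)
The statement you are proving is stated in the paper as a \emph{conjecture}, and the paper gives no proof of it; what your sketch actually outlines is essentially the content of Theorem~\ref{ThmMain} (convergence of the centered positive integer moments and the recursion/asymptotics of the limits $R_b^{(m)}$), which the paper already proves by a closely related route (the polynomial recursion~(\ref{RecHigherMom}), Proposition~\ref{PropPoly}, and the contraction-type iteration in Lemma~\ref{LemFunGeneral}, seeded in the small-variance regime exactly as you propose). The genuine gap is the passage from moment convergence to convergence in law, which your proposal never addresses: you construct candidate limits $R_b^{(m)}(r)$ and show $\mathbb{E}[(Y_n^{(r)}-1)^m]\to R_b^{(m)}(r)$, but the method of moments requires the limiting moment sequence to determine a unique probability law (e.g.\ via Carleman's condition $\sum_m \big(R_b^{(2m)}(r)\big)^{-1/(2m)}=\infty$), and here that fails. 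The paper points this out explicitly: for fixed $r$ the moments $R_b^{(m)}(r)$ grow super-factorially in $m$ (this is forced by the recursion $R_b^{(m)}(r+1)=P_m(\ldots)$, whose top-degree terms compound multiplicatively), so the would-be limit law is not moment-determinate, and convergence of all moments yields only tightness plus subsequential limits whose moments agree with $R_b^{(m)}(r)$ --- not weak convergence of the full sequence, nor uniqueness of the limit. This is precisely why the statement remains Conjecture~\ref{Conj} rather than a theorem.

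A secondary issue is that items (I) and (II) of the conjecture are assertions about the limit laws $L_r^{(b)}$ themselves (the Gaussian behavior of $\sqrt{-r}\,(\mathbf{W}_r-1)$ as $r\to-\infty$ and the distributional fixed-point identity~(\ref{Simul})), so they cannot be established until the limit laws are shown to exist; deducing them from the moment asymptotics of Theorem~\ref{ThmMain}(III) again runs into the same indeterminacy obstruction. To close the gap you would need an argument that does not go through moments alone --- for instance, controlling characteristic functions or Laplace transforms along the recursion~(\ref{Induct}), or proving uniqueness of solutions to the distributional fixed-point equation~(\ref{Simul}) within a suitable class and showing that all subsequential weak limits of $W_n(\beta_{n,r}^{(b)})$ belong to that class --- none of which is supplied by the proposed induction on $m$.
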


\subsection{Further discussion  }

The approach in articles~\cite{AKQ} and~\cite{CSZ1} of analyzing the  high-temperature behavior of the partition function through an expansion that converges term-by-term to a limiting chaos expansion is not applicable to the diamond lattice polymer in the case $s=b$.  I will argue this point through the following heuristic failure at determining a suitable high-temperature scaling $0<\beta_{n}^{(b)}\ll 1$ when $n\gg 1$: By linearizing the partition function, I get
\begin{align}\label{FakeNews}
W_{n}\big(\beta_{n}^{(b)} \big)\, := \,  \frac{1}{|\Gamma_{n}^{b,b}|  }\sum_{p\in \Gamma_{n}^{b,b}  }\prod_{a\Try p} \frac{e^{\beta_{n}^{(b)} \omega_a } }{ \mathbb{E}\big[e^{  \beta_{n}^{(b)}\omega_a  } \big]}
\,\stackrel{\beta_{n}^{(b)}\ll 1 }{ \approx  }\,& \frac{1}{|\Gamma_{n}^{b,b}|  }\sum_{p\in \Gamma_{n}^{b,b}  }\prod_{a\Try p} \big( 1\,+\, \beta_{n}^{(b)}\omega_a \big) 
\nonumber \\  \,=\,& 1\,+\,\sum_{k=1}^\infty \big( \beta_{n}^{(b)}\big)^{k}Q_{k}^{(n)}\big( \omega_{a};\, a\in E_{n}^{b,b}  \big)\,,
\end{align}
where $Q_{k}^{(n)}$ is a homogeneous degree $k$ polynomial of the disorder variables $ \omega_a$.  The linear term $k=1$ in the above expansion has the form
$$\beta_{n}^{(b)}  Q_{1}^{(n)}\big( \omega_{a};\, a\in E_{n}^{b,b}  \big) \,=\,\frac{\beta_{n}^{(b)}}{ b^n }\sum_{ a\in E_{n}^{b,b}} \omega_{a}\, $$
since the  probability that a randomly chosen path passes through any given bond $a\in E_{n}^{b,b}$  is   $1/b^{n}$.  The variance of the linear term is $(\beta_{n}^{(b)})^2$ since $\big|E_{n}^{b,b}\big|=b^{2n}$.  Thus, the linear term converges in probability to zero if  $\beta_{n}^{(b)}\searrow 0$ as $n\rightarrow \infty$.   Likewise, any other fixed term $k\geq 2$ from the expansion~(\ref{FakeNews}) will converge in probability to zero if $\beta_{n}^{(b)}\searrow 0$.  On the other hand, the variance of $ W_{n}(\beta_{n}^{(b)} )$ blows up as $n\rightarrow \infty$ if $\beta_{n}^{(b)} $ does not vanish quickly enough by Remark~\ref{Remark}.\vspace{.2cm}

\subsection{The rest of this article}

 I will assume that  $s=b$ throughout the remainder of this text.  The organization is as follows:  Section~\ref{SecVariance} contains the proof Lemma~\ref{LemVar} and the parts of Theorem~\ref{ThmMain} pertaining to the limiting variance of the variables $ W_n( \beta_{n, r}^{(b)} ) $. Sections~\ref{SecRecRel}-\ref{SecMainProof}  show how the convergence of the variances can be leveraged to obtain convergence of the higher centered moments.  In Section~\ref{SecGaussian} I prove part (III) of Theorem~\ref{ThmMain} regarding the  Gaussian behavior of the limiting moments $R^{(m)}(r)$ when $-r\gg 1$.

\section{Variance analysis}\label{SecVariance}

In this section, I will prove the results from Theorem~\ref{ThmMain} relevant to   the variance case ($m=2$), which are stated in the following corollary of Lemma~\ref{LemVar}.

\begin{corollary}\label{CorVar}
Let $\beta_{n, r}^{(b)}>0$ have the large $n$ asymptotics~(\ref{BetaForm}). Then the  variances $\textup{Var}\big(  W_n( \beta_{n, r}^{(b)} ) \big)$ converge as $n\rightarrow \infty$ to $R_b(r)$, where the function $R_b(r)$ satisfies the properties (I)-(III) listed in Lemma~\ref{LemVar}.   The convergence is uniform over bounded intervals.

\end{corollary}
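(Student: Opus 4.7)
My plan is to obtain Corollary~\ref{CorVar} as a direct application of Lemma~\ref{LemVar}. The substance of the variance analysis is already packaged in that lemma, so the only real task here is to verify that the initial condition $V_{\beta_{n,r}^{(b)}}$ satisfies the three-term asymptotic hypothesis~(\ref{Assumption}) uniformly over bounded $r$-intervals.

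First I would Taylor-expand the map $\beta\mapsto V_\beta$ around $\beta=0$. Writing
$$V_\beta \,=\, \frac{\mathbb{E}[e^{2\beta\omega}]}{\big(\mathbb{E}[e^{\beta\omega}]\big)^2}-1$$
and using the cumulant expansion $\log \mathbb{E}[e^{s\omega}] = s^2/2 + \tau s^3/6 + \mathcal{O}(s^4)$, which is valid in a neighborhood of the origin because $\omega$ has finite exponential moments, I get
\begin{align*}
V_\beta \,=\, \beta^2 \,+\, \tau \beta^3 \,+\, \mathcal{O}(\beta^4),
\end{align*}
with a remainder that is uniform on a fixed small neighborhood of $\beta=0$ and depends only on the distribution of $\omega$. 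I would then substitute~(\ref{BetaForm}) into this expansion and collect all terms down to order $n^{-2}$. The ansatz for $\beta_{n,r}^{(b)}$ is engineered precisely for this substitution: the $-\tau\kappa_b^2/(2n)$ correction produces a cross term in $\beta^2$ of order $n^{-3/2}$ that exactly cancels the leading contribution $\tau\kappa_b^3/n^{3/2}$ coming from $\tau\beta^3$, so the odd-order $n^{-3/2}$ piece drops out and what survives matches~(\ref{VForm}). Uniformity of the error over compact $r$-intervals is automatic, because the Taylor coefficients of $V_\beta$ do not depend on $r$, and the $\mathit{o}(n^{-3/2})$ remainder in~(\ref{BetaForm}) is assumed uniform on compact sets.

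With~(\ref{VForm}) verified, Lemma~\ref{LemVar} applied to $X^{(n,r)} := V_{\beta_{n,r}^{(b)}}$ yields
\begin{align*}
\textup{Var}\big(W_n(\beta_{n,r}^{(b)})\big) \,=\, \varrho_n\big(\beta_{n,r}^{(b)}\big) \,=\, M_b^n\big(V_{\beta_{n,r}^{(b)}}\big) \,\longrightarrow\, R_b(r)\qquad \text{as } n\to \infty,
\end{align*}
with convergence uniform over bounded intervals of $r$. Properties (I)--(III) for the function $R_b$ then transfer verbatim from the statement of Lemma~\ref{LemVar}. I do not anticipate any serious obstacle beyond careful bookkeeping in the Taylor expansion; all the delicate work concerning the marginally repelling dynamics of $M_b$ near the fixed point $x=0$ is already absorbed into the proof of Lemma~\ref{LemVar}.
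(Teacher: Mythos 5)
Your proposal takes exactly the paper's route: the paper's proof of Corollary~\ref{CorVar} likewise writes $\textup{Var}\big(W_n(\beta_{n,r}^{(b)})\big)=M_b^n\big(X^{(n,r)}\big)$ with $X^{(n,r)}:=\varrho_0^{(2)}\big(\beta_{n,r}^{(b)}\big)=V_{\beta_{n,r}^{(b)}}$, asserts that $X^{(n,r)}$ has the asymptotics~(\ref{Assumption}) with error uniform over bounded $r$-intervals, and then cites Lemma~\ref{LemVar}; your cumulant expansion $V_\beta=\beta^2+\tau\beta^3+\mathcal{O}(\beta^4)$ and the cancellation of the $n^{-3/2}$ term against the $-\tau\kappa_b^2/(2n)$ correction supply detail that the paper leaves implicit, and that part is right.

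One caveat on the step ``what survives matches~(\ref{VForm})'': with~(\ref{BetaForm}) exactly as printed this does not come out. Substituting, the cross terms in $\beta^2$ give $2\kappa_b^2\eta_b\log n/n^2$ and $2\kappa_b^2 r/n^2$ (the factor $2$ from $2\cdot\frac{\kappa_b}{\sqrt n}\cdot\frac{\kappa_b\eta_b\log n}{n^{3/2}}$, etc.), and there is an additional $r$-independent $n^{-2}$ contribution from $\big(\tau\kappa_b^2/(2n)\big)^2$, from the term $3\tau\big(\kappa_b/\sqrt n\big)^2\big(-\tau\kappa_b^2/(2n)\big)$ in $\tau\beta^3$, and from the $\beta^4$ coefficient of $V_\beta$. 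So the log-term coefficient is $2\kappa_b^2\eta_b$ rather than the $\kappa_b^2\eta_b$ required by~(\ref{Assumption}), and the $n^{-2}$ parameter is $2r$ plus a moment-dependent constant rather than $r$; to land on~(\ref{VForm}) one must halve the two $n^{-3/2}$ coefficients in~(\ref{BetaForm}) and add a constant correction depending on $\mathbb{E}[\omega^3]$ and $\mathbb{E}[\omega^4]$. This mismatch is inherited from the paper itself, whose proof of the corollary asserts the~(\ref{Assumption})-form asymptotics for $X^{(n,r)}$ without performing the expansion, so it is not a defect of your strategy; but since your write-up promises the explicit bookkeeping, you should either adjust the parametrization of $\beta_{n,r}^{(b)}$ accordingly or state the limit as $R_b$ evaluated at the correspondingly rescaled and shifted argument. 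Everything downstream (applying Lemma~\ref{LemVar} to $X^{(n,r)}$ and transferring properties (I)--(III), with uniformity on bounded $r$-intervals) is fine and identical to the paper.
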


The proof of Corollary~\ref{CorVar} is placed at the end of this section after the proof of Lemma~\ref{LemVar}.  Before I move to the proof of Lemma~\ref{LemVar}, I will develop some estimates on the polynomial map $M_b:[0,\infty)\rightarrow [0,\infty)$ having the form   $M_b(x)=((1+x)^b-1)/b $, or, more  precisely, estimates involving its function inverse $M_b^{-1}$.

\begin{lemma}\label{LemEstimates} Let $M_b^{-k}$ be the $k$-fold composition of the function inverse of $M_b$.

\begin{enumerate}[(i).]
\item The series $\displaystyle S_b(x):=\sum_{k=0}^{\infty}\big(M_b^{-k}(x)\big)^2$ is convergent for all $x\geq 0$.  Moreover, $S_b(x)$ is $\mathcal{O}(x)$ as $x\searrow 0$.

\item  For $n\gg 1$, $M_b^{-n}(x)= \kappa_b^2/n +\mathit{O}\big( 1/n^2\big)$.  Moreover, there is a $C>0$ such that for small enough $x\geq 0$ and all $n\in \mathbb{N}$
$$ \bigg| M_b^{-n}(x)\,-\,\frac{x}{1+\frac{n}{\kappa_b^{2}} x}  \bigg| \,\leq \,\frac{C\log\big(1+nx\big) }{n^2}\,. $$

\item   There is a $C>0$ such that for small enough $x\geq 0$ and all $n\in \mathbb{N}$
$$ \frac{d}{dx}M_b^{-n}(x)\,\leq \, \frac{C}{\big(1+\frac{n}{\kappa_b^2}x\big)^2} \,. $$

\item  The limit $\displaystyle D_b(x) :=  \lim_{n\rightarrow \infty} \frac{1}{n^2}\prod_{k=1}^n \Big(1+ M_b^{-k}(x)\Big)^{b-1}   $ exists for all $x\geq 0$, and the convergence is uniform over bounded intervals.

\end{enumerate}

\end{lemma}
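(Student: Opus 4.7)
The guiding idea is to analyze the orbit $y_n := M_b^{-n}(x)$ near the marginally attracting fixed point $0$ of $M_b^{-1}(y) = (1+by)^{1/b}-1$, whose Taylor expansion is
$$M_b^{-1}(y) = y - \tfrac{1}{\kappa_b^2}y^2 + \tfrac{(b-1)(2b-1)}{6}y^3 + \mathit{O}(y^4).$$
In the reciprocal coordinate $z_n := 1/y_n$, this becomes $z_{n+1} - z_n = 1/\kappa_b^2 - \tfrac{b^2-1}{12}y_n + \mathit{O}(y_n^2)$. Since the subleading coefficient is \emph{negative}, for small enough $x$ one gets the clean monotone comparison $z_n \leq z_n^\ast$ with the affine target $z_n^\ast := 1/x + n/\kappa_b^2$, equivalently $y_n \geq y_n^\ast := x/(1 + \tfrac{n}{\kappa_b^2}x)$. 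This pair of one-sided bounds is the workhorse for all four parts.

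For part (ii), the crude bound $z_n \gtrsim n + 1/x$ lets me telescope the sharper recursion to $z_n = z_n^\ast + \mathit{O}\bigl(\sum_{k<n} y_k\bigr) = z_n^\ast + \mathit{O}(\log(1+nx))$, after which
$$y_n - y_n^\ast = \frac{z_n^\ast - z_n}{z_n z_n^\ast} = \mathit{O}\!\left(\frac{\log(1+nx)}{(n+\kappa_b^2/x)^2}\right) = \mathit{O}\!\left(\frac{\log(1+nx)}{n^2}\right),$$
and the asymptotic $M_b^{-n}(x) = \kappa_b^2/n + \mathit{O}(1/n^2)$ for fixed $x$ drops out of the explicit formula for $y_n^\ast$. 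Part (i) then follows by an integral comparison: $\sum_k y_k^2 \leq x^2 + C\sum_{k\geq 1}(k+\kappa_b^2/x)^{-2} = \mathit{O}(x)$ as $x\searrow 0$, while convergence for arbitrary $x\geq 0$ is immediate from $y_k = \mathit{O}(1/k)$.

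For part (iii), the chain rule together with the identity $(M_b^{-1})'(y) = (1+M_b^{-1}(y))^{-(b-1)}$ yields
$$\frac{d}{dx}M_b^{-n}(x) = \prod_{k=1}^n (1+y_k)^{-(b-1)}.$$
Using $y_k \geq y_k^\ast$, the key identity $(b-1)\kappa_b^2 = 2$, the inequality $\log(1+y)\geq y - y^2$, and the harmonic-sum estimate $\sum_{k=1}^n \kappa_b^2/(k+\kappa_b^2/x) = \kappa_b^2\log(1+nx/\kappa_b^2) + \mathit{O}(1)$, I lower-bound $(b-1)\sum_{k=1}^n\log(1+y_k) \geq 2\log(1+nx/\kappa_b^2) - \mathit{O}(1)$, which exponentiates to the claimed derivative bound.

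For part (iv), take logs of $\frac{1}{n^2}\prod_{k=1}^n(1+y_k)^{b-1}$ and decompose $(b-1)\log(1+y_k) = \tfrac{2}{k} + \bigl[(b-1)y_k - \tfrac{2}{k}\bigr] - \tfrac{b-1}{2}y_k^2 + \mathit{O}(y_k^3)$. The refined estimate $z_k = k/\kappa_b^2 + 1/x + \mathit{O}(\log(1+kx))$ shows the bracketed defect is $\mathit{O}\bigl((1/x + \log(1+kx))/k^2\bigr)$, hence absolutely summable; together with $\sum_{k=1}^n 2/k - 2\log n \to 2\gamma$ (Euler's constant) this produces
$$D_b(x) = \exp\!\left(2\gamma + \sum_{k=1}^\infty\bigl[(b-1)\log(1+y_k) - \tfrac{2}{k}\bigr]\right).$$
I expect the main technical obstacle to be making everything uniform on bounded intervals \emph{down to $x=0$}, where $1/x$ blows up in the error bounds even though the product itself vanishes like $x^2$. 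The fix is to split the sums at $k \sim 1/x$, separately treating the early ``frozen'' regime ($y_k\approx x$) and the late asymptotic regime ($y_k\sim\kappa_b^2/k$), and to verify that $D_b$ extends continuously to $x=0$ with $D_b(0)=0$.
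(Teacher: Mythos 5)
Your proposal is correct in substance but proceeds by a genuinely different mechanism than the paper. You pass to the reciprocal coordinate $z_n=1/M_b^{-n}(x)$, compute the one-step drift $z_{n+1}-z_n=\kappa_b^{-2}-\tfrac{b^2-1}{12}y_n+\mathit{O}(y_n^2)$, and exploit the sign of the cubic coefficient to get the clean one-sided comparison $y_n\geq x/(1+\tfrac{n}{\kappa_b^2}x)$ plus a crude bound $z_n\gtrsim n+1/x$, after which part (ii) is a telescoping estimate in $z$; the paper instead sandwiches $M_b^{-1}$ between two fractional linear maps $x/(1+a_i x)$ with $a_1<\kappa_b^{-2}<a_2$ and compares $M_b^{-n}$ with the iterate of $U_b(x)=x/(1+\kappa_b^{-2}x)$ via a telescoping sum and Taylor's theorem. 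For part (iii) your exact identity $\frac{d}{dx}M_b^{-n}(x)=\prod_{k=1}^n(1+M_b^{-k}(x))^{-(b-1)}$, combined with $(b-1)\kappa_b^2=2$ and the lower bound on $y_k$, replaces the paper's inequality chain $(M_b^{-1})'(y)\leq 1-2y/\kappa_b^2+cy^2$ followed by exponentiation; and for part (iv) you take logarithms and exhibit an explicit limit (Euler constant plus an absolutely convergent defect series), whereas the paper runs a Cauchy-sequence argument with the renormalizing factors $(\tfrac{k-1}{k})^2$. What your route buys: sharper constants without the auxiliary $a_1,a_2$, an exact rather than approximate derivative formula, a closed-form expression for $D_b$, and—importantly—an honest treatment of uniformity near $x=0$ in part (iv): your proposed split of the sum at $k\sim 1/x$ (with $D_b(0)=0$, $D_b(x)\asymp x^2$) is needed, and in fact the paper's own estimate (I) there fails at $x=0$, where the deviation is of order $1/k$ rather than $1/k^2$, so your extra care is not wasted. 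Two small presentational points: the upper bound on $y_k$ used in part (i) comes from the crude bound $z_k\gtrsim k+1/x$ (not from $y_k\geq y_k^\ast$), so state it there explicitly; and, exactly as in the paper, your argument really yields $M_b^{-n}(x)=\kappa_b^2/n+\mathit{O}(\log n/n^2)$ for fixed $x$ (the nonzero cubic coefficient produces a genuine logarithmic correction), which is what is actually used downstream.
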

\begin{proof}  I can assume that $x\geq 0$ is $\ll 1$ since the sequence $\big\{ M_b^{-k}(x) \big\}_{k=1}^{\infty}$ decreases at an exponential rate  as long as the terms $M_b^{-k}(x)$ remain above some cut-off $c>0$.  In other words, for any $c>0$ there is a $\lambda_c>0$ such that $M_b^{-1}(x)\leq e^{-\lambda_c}x$ for all $x>c$.  Note that for $0<x\ll 1$,
 \begin{align*}
 M_b^{-1}(x)\,=\,\big(1+bx    \big)^{\frac{1}{b}}\,-\,1  \,=\,x\,-\,\frac{x^2}{ \kappa_b^{2}  }\,+\,\mathit{O}\big(x^3\big)\,.  
 \end{align*}
Pick $a_{1}\in (0,\kappa_b^{-2 })$ and $ a_{2}\in ( \kappa_b^{-2 }  , \infty)$, and define $U_\downarrow(x)=\frac{x}{1+ a_{2}x }  $, $U_\uparrow(x)=\frac{x}{1+ a_{1}x }$.  For small enough $x\geq 0$,
\begin{align}\label{Obglidu}
U_\downarrow(x)    \,\leq \,M^{-1}_b(x)\,\leq  \, U_\uparrow(x)  \,.   
\end{align}
Functions of the form $U(x)=\frac{x}{1+ax}$, which happen to be  fractional linear transforms, are useful in the estimates below because the $k$-fold composition has the form 
\begin{align}\label{FLT}
U^k(x)=\frac{x}{1+kax}\,,\quad \text{and the derivative is}\quad \frac{d}{dx}U^k(x)=\frac{1}{(1+kax)^2}\,.
\end{align}

 \vspace{.25cm}

\noindent Part (i): For $x\geq 0$ small, I can apply~(\ref{Obglidu}) to get
$$ S_b(x)\,\leq \,\sum_{k=0}^{\infty}\big(U_\uparrow^{k}(x)\big)^2\, = \,\sum_{k=1}^{\infty} \frac{x^2}{\big(1+ ka_1 x \big)^2   }   \,\leq  \, \int_0^{\infty}\frac{x^2}{\big(1+ ra_1 x \big)^2   } dr\,=\,\frac{x}{a_1}   \,.   $$

\vspace{.25cm}

\noindent Part (ii): For $U_b(x):=\frac{x}{1+ \kappa_b^{-2}x }$, there is a $c>0$ such that for all $x\geq 0$
\begin{align}\label{Ergtido}
 \big| M_b^{-1}(x)\,-\,   U_b(x)\big|\,\leq \, cx^3   \,   .  
 \end{align}
By inserting a telescoping sum and applying the triangle inequality, I have that
\begin{align*}
\Big| M_b^{-n}(x)-U^{n}_b(x)   \Big|\,\leq \,& \sum_{k=0}^{n-1} \Big|U^{ n-k-1}_b\big(M_b^{-(k+1)}(x)\big) \,-\,  U^{n-k}_b\big(M_b^{-k}(x)\big)  \Big|\,. 
\intertext{As a consequence of Taylor's theorem, the above is bounded by } 
\leq \,&  \sum_{k=0}^{n-1} \Big| M_b^{-1}\big(M_b^{-k}(x)\big) \,-\, U_b\big(M^{-k}_b(x)\big) \Big|  \frac{d}{dy} U_b^{n-k-1}(y)\Big|_{y= M^{-k}_b(x) }\,.
 \intertext{The above also used that the derivative of $U_b^{n-k-1}$ is increasing, and thus maximized at the right endpoint of the interval $\big[M^{-k-1}_b(x), M^{-k}_b(x)    \big]$.    Inserting the expression~(\ref{FLT}) for  the derivative of $U_b^{n-k-1}$ and applying~(\ref{Ergtido}) yields} 
  \leq \,& c \sum_{k=0}^{n-1}\big(M_b^{-k}(x)\big)^3 \bigg(\frac{M_b^{-k}(x)}{ 1+\frac{n-k-1}{\kappa_b^2}M_b^{-k}(x)  }\bigg)^2 \frac{1}{\big(M_b^{-k}(x)\big)^2}
 \\ \leq \,& c \sum_{k=0}^{n-1} U_\uparrow^{k}(x) \bigg(\frac{U_\downarrow^{k}(x)}{1+\frac{n-k-1}{\kappa_b^2}U_\downarrow^{k}(x)  }\bigg)^2\, = \, c \sum_{k=0}^{n-1} \frac{x}{1+k a_1 x   }\frac{x^2}{\big(1+k a_2x+\frac{n-k-1}{\kappa_b^2}x\big)^2  } \,,
 \intertext{where the second inequality above employs~(\ref{Obglidu}) and that $\ell(x):=x/(1+ax)$ for $a>0$ is an increasing function on $[0,\infty)$.  Since   $a_2>\kappa_b^{-2}$, I can replace $a_2$ by $\kappa_b^{-2} $ in the above to get } 
    \leq  \,& \frac{c\kappa_b^4}{(n-1)^2} \sum_{k=0}^{n-1} \frac{x}{1+ka_1   x}
   \\ \leq  \,& \frac{c\kappa_b^4}{(n-1)^2} \bigg(x\,+\, \int_0^{n-1}\frac{x}{1+ra_{1}x}dr\bigg)
    \,=\, \frac{c \kappa_b^4}{a_1(n-1)^2}\Big(x\,+\, \log\big(1+xna_{1}   \big)\Big)\,,\intertext{and hence there is a $C>0$ large enough so such that for all $n\in \mathbb{N}$ and $x\in \R_+$} \, \leq \,&\frac{C }{n^2} \log\big(1+xn   \big)\,.
\end{align*}

\vspace{.5cm}

\noindent Part (iii):  By the chain rule,
\begin{align*}
\frac{d}{dx}M^{-n}_b(x)\,= \,&\prod_{k=0}^{n-1}\frac{d}{dy} M^{-1}_b(y)\Big|_{y= M^{-k}_b(x)}\,. 
\intertext{The derivative of $M_b^{-1}(x)$ is bounded by  $1-2x/\kappa_b^2+cx^2$ for some $c>0$ and small enough $x\geq 0$, so    }
\,\leq \,&\prod_{k=0}^{n-1} \left( 1\,-\,\frac{2}{\kappa_b^2}M^{-k}_b(x) \,+\,c\big(M^{-k}_b(x)\big)^2 \right)\\
\,\leq \,&\exp\left\{ -\frac{2}{\kappa_b^2}\sum_{k=0}^{n-1} M^{-k}_b(x)\,+\,c\sum_{k=0}^{n-1} \big( M^{-k}_b(x)\big)^2 \right\}\,.
\intertext{Applying the triangle inequality and using the definition of $S_b(x)$ allows us to write   }
\,\leq \,&\exp\left\{ -\frac{2}{\kappa_b^2}\sum_{k=0}^{n-1} \frac{x}{1+ \frac{k}{\kappa_b^2}x  }\,+\, \frac{2}{\kappa_b^2}\sum_{k=0}^{n-1}\bigg| \frac{x}{1+ \frac{k}{\kappa_b^2} x }- M^{-k}_b(x)   \bigg| \,+\,cS_b(x)  \right\}\,,
\intertext{which by parts (i) and (ii) is smaller than    }
\,\leq \,&C\exp\left\{ -\frac{2}{\kappa_b^2}\sum_{k=0}^{n-1} \frac{x}{1+ \frac{k}{\kappa_b^2}x  } \right\}
\,\leq \,C\exp\left\{ -\frac{2}{\kappa_b^2}\int_0^n dy \frac{x}{1+ \frac{y}{\kappa_b^2}x  }\right\}
\,= \,C \frac{1}{\big( 1 + x\frac{n}{\kappa_b^2}\big)^2 }\,
\end{align*}
for some $C>0$ and all $n$ and small enough $x$.

\vspace{.5cm}
\noindent Part (iv): I will need a few estimates:
\begin{enumerate}[(I).]
\item There is a $c>0$ such that for all  $k\in \mathbb{N}  $ and $x\geq 0$ in a bounded interval
 $$\bigg| \Big(\frac{k-1}{k}\Big)^2\Big(1+ M_b^{-k}(x)\Big)^{b-1}\,-\,1\bigg| \, \leq \, \frac{c}{k^2}\,.$$

\item  The expression
\begin{align}\label{ProdWell}
 \frac{1}{n^2}\prod_{k=1}^n \Big(1+ M_b^{-k}(x)\Big)^{b-1} 
 \end{align}
is uniformly bounded for $n\in \mathbb{N}$ and $x$ in a bounded interval. 

\end{enumerate}
Statement (I) follows since $M_b^{-k}(x)=\frac{\kappa_b^2}{k}+\mathit{O}\big(\frac{1}{k^2}\big)=\frac{2}{(b-1)k}+\mathit{O}\big(\frac{1}{k^2}\big)$ by part (ii).  For statement (II)
\begin{align*}
 \frac{1}{n^2}\prod_{k=1}^n \Big(1+ M_b^{-k}(x)\Big)^{b-1}\,= \,  \prod_{k=1}^n \Big(\frac{k-1}{k}\Big)^2\Big(1+ M_b^{-k}(x)\Big)^{b-1}\,\leq  \,  \prod_{k=1}^n\Big(1+\frac{c}{k^2}\Big) \,\leq \,\exp\bigg\{ \sum_{k=1}^{\infty}\frac{c}{k^2}  \bigg\} \,.  
 \end{align*}

Now I will proceed with the proof by showing that the sequence~(\ref{ProdWell}) is Cauchy.    For $N>n$, I can write
\begin{align*}
 \Bigg|  \frac{1}{N^2}&\prod_{k=1}^N \Big(1+ M_b^{-k}(x)\Big)^{b-1}  \,-\,  \frac{1}{n^2}\prod_{k=1}^n \Big(1+ M_b^{-k}(x)\Big)^{b-1}  \Bigg| \\ & \,= \,\frac{1}{n^2}\prod_{k=1}^n \Big(1+ M_b^{-k}(x)\Big)^{b-1}\Bigg|  \frac{n^2}{N^2}\prod_{k=n+1}^N \Big(1+ M_b^{-k}(x)\Big)^{b-1} \,-\, 1     \Bigg| \,.    
\intertext{Rewriting the right term above and applying (II) to the left term  gives us a $C>0$ such that  }   
 &\,\leq  \,C \Bigg|  \prod_{k=n+1}^N \Big(\frac{k-1}{k}\Big)^2\Big(1+ M_b^{-k}(x)\Big)^{b-1} \,-\, 1     \Bigg|\,.
\intertext{(I) and (II) give us the second inequality below:   }
 &\,\leq  \,C \sum_{k=n+1}^N \Bigg|\Big(\frac{k-1}{k}\Big)^2\Big(1+ M_b^{-k}(x)\Big)^{b-1} \,-\, 1     \Bigg| \prod_{\ell=k+1}^n \Big(\frac{\ell-1}{\ell}\Big)^2\Big(1+ M_b^{-\ell}(x)\Big)^{b-1} \,.  \\
  &\,\leq  \,C^2 \sum_{k=n+1}^N \frac{c}{k^2}\hspace{.5cm} \stackrel{n\rightarrow \infty}{\longrightarrow}\hspace{.5cm} 0  \,.
  \end{align*}
Hence the sequence~(\ref{ProdWell}) is convergent, and the convergence is uniform over bounded intervals.

\end{proof}

With the estimates in Lemma~\ref{LemEstimates} at my disposal, I next turn to the proof of Lemma~\ref{LemVar}.  Recall that $X^{(n,r)}$ is a sequence of the form~(\ref{Assumption}), where the error term $\mathit{o}(1/n^2)$ is uniformly controllable for  $r$ in bounded intervals $\mathcal{I}\subset \R$.  In other terms, for $n\gg 1$ and  any bounded $\mathcal{I}$ 
$$ \sup_{r\in \mathcal{I} } \bigg|X^{(n,r)}\,-\,\kappa_{b}^2 \bigg(\frac{ 1 }{n}  +\frac{ \eta_{b}\log n}{n^2}+\frac{r}{n^2}\bigg)       \bigg|\,=\mathit{o}\Big( \frac{1}{n^2}  \Big) \, . $$
\vspace{.2cm}

\begin{proof}[Proof of Lemma~\ref{LemVar}]  I will begin by proving the existence of the limit~(\ref{LimitAssump}).   It is sufficient for us to show that $M^{n}_b\big(X^{(n,r)}   \big)  $ converges with large $n$  to a limit, $R_b(r)$, for all $r$ smaller than some cut-off $r_0 \in \R$.  To demonstrate this claim, let us temporarily assume that $M^{n}_b\big(X^{(n,r)}   \big)\stackrel{n\rightarrow \infty}{\longrightarrow} R_b(r)  $ for $r\leq r_0$ and   pick  some $r'> r_0$.  Fix some  $k\in \mathbb{N}$ with  $k>r'-r_0$.  I can write
\begin{align}
M^{-k}_b\big(X^{(n,r')}   \big)\,=\,& M^{-k}_b \bigg(\frac{  \kappa_{b}^2 }{n}  +\frac{ \kappa_{b}^2 \eta_{b}\log n }{n^2}+\frac{ \kappa_{b}^2r'}{n^2}\,+\,\mathit{o}\Big(\frac{1}{n^2}   \Big) \bigg)   \,.\nonumber
\intertext{Moreover, through the quadratic approximation  $M^{-1}_b(x)=x- x^2/\kappa_b^2+\mathit{O}\big(x^3\big)$ for $0<x\ll 1$, I can expand $M^{-k}_b$ as  }
  \,=\,&\frac{  \kappa_{b}^2 }{n}  +\frac{ \kappa_{b}^2 \eta_{b}\log n }{n^2}+\frac{ \kappa_{b}^2(r'-k)}{n^2}\,+\,\mathit{o}\Big(\frac{1}{n^2}   \Big) \,.\label{Dish}
\end{align}
Using~(\ref{Dish}) and my assumption on the convergence of $M^{n}_b\big(X^{(n,r)} \big)$ for $r=r'-k<r_0$, 
\begin{align*}
M^{n}_b\big(X^{(n,r')} \big)\,=\,& M^k_b\bigg(M^{n}_b\Big(M^{-k}_b\big(X^{(n,r')}\big)   \Big)\bigg)\\ \,=\,& M^k_b\bigg(\underbrace{M^{n}_b\bigg(\frac{  \kappa_{b}^2 }{n}\,  +\,\frac{ \kappa_{b}^2 \eta_{b}\log  n }{n^2}\,+\,\frac{ \kappa_{b}^2(r'-k)}{n^2}\,+\,\mathit{o}\Big(\frac{1}{n^2}   \Big)  \bigg) }\bigg)\,\,\stackrel{n\rightarrow \infty}{\longrightarrow}\,\, M^k_b\big( R_b(r'-k)   \big) \,. \\   & \hspace{4cm} \stackrel{n\rightarrow \infty}{\longrightarrow }\,\,  R_b(r'-k) 
\end{align*}
Hence, $M^{n}_b\big(X^{(n,r')} \big)$ is convergent with large $n$.

Now I will prove that $M^{n}_b\big(X^{(n,r)}   \big)  $ converges with large $n$ for all $r\in \R$ sufficiently far in the negative direction. \vspace{.35cm}\\
\noindent \textbf{Defining inverted variables:} Let the sequence   $s_{j}^{(n,r)}\in (0,n)$ be defined by
$$s_{j}^{(n,r)} \, := \,   \frac{\kappa_{b}^2}{M^{j}_b\big(X^{(n,r)}  \big)}    \,. $$
Note that assumption~(\ref{Assumption}) is equivalent to
\begin{align}\label{Inuit}
s_{0}^{(n,r)}\,=\, n \,-\,  \eta_{b}\log n \,-\,r\,+\,\mathit{o}(1  )  \, .
\end{align}
I will identify $s_{j}\equiv s_{j}^{(n,r)}$ for  notational convenience  in the remainder of the proof.

The form of the map $M_b$  and  the definition of the $s_{j}$'s imply the recursive equation
\begin{align}\label{Selge}
  s_{j+1}     \,= &\,\bigg( \frac{1  }{   s_{j}  }\, +\, \frac{1  }{ s_{j}^2  }\,+\,\frac{2(b-2)}{3(b-1)} \frac{1  }{ s_{j}^3  }\,+\,\frac{1}{s_j^4}f_b\Big( \frac{1}{s_j}\Big)   \bigg)^{-1}   \,   \, ,
\end{align}
where $f_b(x):= \frac{1}{\kappa_b^2 x^4}\big(M_b\big(\kappa_b^2x\big)-\kappa_b^2x -\kappa_b^4x^2- \frac{(b-1)(b-2)}{6}\kappa_b^6x^3 \big)$.  Note that $f_b$ is a $(b-4)$-degree polynomial for $b\geq 4$ and $f_b=0$ for $b\leq 3$.  

Moreover, as long as $s_j>1$ I can write
\begin{align}\label{SelgeII}
s_{j}\,-\,1  \,= &\, \bigg(\frac{1  }{  s_{j}  }\, +\, \frac{1  }{   s_{j}^2  }\, +\, \frac{1  }{ s_{j}^3  }\,+\,\frac{1}{s_j^4}g\Big( \frac{1}{s_j}  \Big)   \bigg)^{-1}\, ,
\end{align}
where $g(x):=   \frac{ 1  }{1-x   } $.  Notice that the expressions within the inverses on the right sides of~(\ref{Selge}) and~(\ref{SelgeII}) differ only by
\begin{align}
\frac{y}{1-y}\,-\,   \frac{1}{\kappa_b^2} M_b\big(\kappa_b^2y  \big) \,=\,    \eta_b y^3\,+\,y^4g ( y  )\,-\,y^4f_b( y )\hspace{.4cm}\text{evaluated at}\hspace{.4cm}y=\frac{1}{s_j}\,,\label{PreHoink}
\end{align}
since $1-\frac{2(b-2)}{3(b-1)}=\frac{b+1}{3(b-1)}=:\eta_{b}$.  Note that~(\ref{PreHoink}) is positive because for any $k\in \mathbb{N}$
\begin{align*}
\frac{d^k}{dy^k} \frac{1}{\kappa_b^2} M_b\big(\kappa_b^2y  \big) \Big|_{y=0}\, =\, 1_{k\leq b} \frac{2^{k-1} (b-1)! }{(b-k)!(b-1)^{k-1}}\,\leq \,k!\,=\,\frac{d^k}{dy^k} \frac{y}{1-y}\Big|_{y=0}\,.
\end{align*}
Hence, $s_{j+1}>s_j-1  $.\vspace{.35cm}\\
\noindent \textbf{Controlling the difference between $\mathbf{s_{j+1}}$ and $\mathbf{s_{j}-1}$:}
Subtracting~(\ref{SelgeII}) from (\ref{Selge}), I get the second line below 
\begin{align}
s_{j+1}+1 -  s_{j}  \,=\,& \bigg(\frac{  \kappa_b^2  }{  M_b\big( \kappa_b^2y  \big)   }\,-\,\Big(\frac{1}{y}-1\Big)\bigg)\bigg|_{y=\frac{1}{s_j}}\label{PreDimple}
\\  \,=\, & \bigg( \Big( y + y^2+\frac{2(b-2)}{3(b-1)}y^3+y^4f_b( y)\Big)^{-1}\nonumber    \,-\,\Big( y + y^2+ y^3+y^4g( y) \Big)^{-1}\bigg)\bigg|_{y=\frac{1}{s_j}}\nonumber \\
\,=\,&y\bigg( \eta_b+ y\Big( g(y) - f_b(y)\Big) \bigg) \frac{ \kappa_b^2y (1-y)  }{ M_b\big( \kappa_b^2y \big)}\bigg|_{y=\frac{1}{s_j}}  \nonumber
 \\
\,=\,&\big(\eta_b y+y^2h_b(y)    \big)\Big|_{y=\frac{1}{s_j}} \,,\label{Dimple}
\end{align}
where the   function $h_b:[0,1]\rightarrow \R$ is defined by
\begin{align*}
h_b(x)\,=\,&  \eta_b x\bigg(  \frac{ \kappa_b^2x (1-x)  }{ M_b\big( \kappa_b^2x \big)}-1 \bigg)+ x\Big( g(x) - f_b(x)\Big)  \frac{ \kappa_b^2x (1-x)  }{ M_b\big( \kappa_b^2x \big)}\\ \,=\,&\frac{\kappa_b^2x}{ M_b\big( \kappa_b^2x \big)}\bigg(  \eta_b x(1-x) - \frac{ \eta_b}{\kappa_b^{2}}M_b\big( \kappa_b^2x \big) + x -   x(1-x)f_b(x)  \bigg)\,. 
  \end{align*}
 The above form shows that $h_b$ has a bounded, continuous derivative since $f_b$ and $M_b$ are polynomials and $M_b(x)/x$ is $\geq 1$ on $\R_+$.\vspace{.35cm}\\
\noindent \textbf{Estimating $\mathbf{s_{k}}$ through a series:}
Fix some $\epsilon\in (1,\infty) $ and define  $u_n^{(\epsilon)}\in \mathbb{N}$ as the first number $k=u_n$ such that $s_k  < \epsilon $.   For $ 1\leq k\leq  \min\big( n ,u_n^{(\epsilon)}\big)$, the value $s_k$ can be written in terms of~(\ref{Dimple}) and a telescoping sum as follows:
\begin{align}
s_k\,  = &\,n\, -\,r\, -\, \eta_{b}\log n \,+\,(s_k-s_0)\,+\,\mathit{o}(1)     \,\nonumber   \\
 =& \,n\,-\,r\, -\,    k  \,- \, \eta_{b}\log n \,+\,   \,\sum_{j=0}^{k-1}\big(s_{j+1}+1-s_{j}   \big)\,+\, \mathit{o}(1) \,\nonumber  \\
  = &\, n\,-\,r\, -\,  k  \,- \, \eta_{b}\log n \,+\, \eta_b\sum_{j=0}^{k-1}\frac{1}{s_{j}}  \, + \, \sum_{j=0}^{k-1}\frac{1}{s_{j}^2}h_b\Big(\frac{1}{s_j}  \Big)  \,+\, \mathit{o}(1) \, .\nonumber
 \intertext{Next I insert a telescoping sum of $\log s_k   $ terms:  }
 =&\, n\,-\,r \, -\,k  \,- \,\eta_{b}\log n\,+\,\eta_b(\log s_0-\log s_k      )  \,+\, \eta_b\sum_{j=0}^{k-1}\bigg(\frac{1}{s_{j}}-\log s_j+\log s_{j+1} \bigg)\nonumber \\
  &\,+\,\sum_{j=0}^{k-1}\frac{1}{s_{j}^2}h_b\Big(\frac{1}{s_j}\Big)     \,+\, \mathit{o}(1)\,, \nonumber  
\intertext{and since $\log n-\log s_0=\mathit{o}(1)$, which holds as a consequence of~(\ref{Inuit}), I also have} 
 =&\, n\,-\,r  \,- \, k \,- \,\eta_{b}\log s_k  \,+\, \eta_b\sum_{j=0}^{k-1}\bigg(\frac{1}{s_{j}}+\log\Big(1+\frac{s_{j+1}-s_j}{s_j}  \Big) \bigg)\nonumber   \\
  &\,+\,\sum_{j=0}^{k-1}\frac{1}{s_{j}^2}h_b\Big(\frac{1}{s_j}\Big)     \,+\, \mathit{o}(1) \,.\nonumber
  \intertext{Now I will use~(\ref{Dimple}) to  substitute for the difference $s_{j+1}-s_j$ inside the logarithm     }\,.
  =&\, n\,-\,r \, -\,  k \,-\,\eta_{b}\log s_k   \,+\, \eta_b\sum_{j=0}^{k-1}\bigg(\frac{1}{s_{j}}+\log\bigg(1-\frac{1}{s_j}+\frac{\eta_b}{s_j^2} +\frac{1}{s_{j}^3 }h_b\Big( \frac{1}{s_{j} } \Big)\bigg) \bigg)\nonumber \\ &\,+\,\sum_{j=0}^{k-1}\frac{1}{s_{j}^2}h_b\Big(\frac{1}{s_j}\Big)     \,+\, \mathit{o}(1) \,\nonumber  \\
 =&\, n\,-\,r \, - \, k  \,-\,\eta_{b}\log s_k \,+\,\sum_{j=0}^{k-1}\Big(\frac{\kappa_b^2}{s_{j}}\Big)^2\widehat{h}_b\Big(\frac{\kappa_b^2}{s_j}\Big)     \,+\, \mathit{o}(1) \, ,\label{Heffle}
\end{align} 
where $\widehat{h}_b:(0,\kappa_b^2]\rightarrow \R$ is defined by
\begin{align}
\widehat{h}_b(x)\,: = &\,  \frac{1}{ x^2}\bigg( \frac{x}{\kappa_b^2} -\log\Big( \frac{M_b(x)}{x}\Big)        \bigg)      \,+\,h_b\Big(\frac{x}{\kappa_b^2} \Big)   \nonumber  \\
\,= &\, \frac{1}{ x^2}\bigg(  \frac{x}{\kappa_b^2}  +\log\bigg(1-\frac{x}{\kappa_b^2} +\eta_b\frac{ x^2}{ \kappa_b^4  }+\frac{x^3}{ \kappa_b^6 }h_b\Big( \frac{x}{\kappa_b^2}\Big)\bigg)\bigg)\,+\,h_b\Big(\frac{x}{\kappa_b^2} \Big)\,.\nonumber 
\end{align}
The first line above implies that $\widehat{h}_b$ has a bounded, continuous derivative since the polynomial $M_b(x)/x$ is $\geq 1$ on $(0,\infty)$ and $h_b$  has a bounded, continuous derivative. \vspace{.35cm}\\
\noindent \textbf{Translating back to  the $\mathbf{M^k_b\big(X^{(n,r)}\big)} $ variables:} I can write~(\ref{Heffle}) in terms of the $M^j_b\big(X^{(n,r)}\big) $'s as
\begin{align}\label{Yaper}
\frac{\kappa_b^2}{ M^k_b\big(X^{(n,r)}\big) }  \,  =&\, n\,-\,r\,  -\,k  \,-\,\eta_{b}\log\bigg(\frac{\kappa_b^2}{M^k_b\big(X^{(n,r)}\big) }\bigg) \,+\,\sum_{j=0}^{k-1}\widehat{h}_b\Big(M^j_b\big(X^{(n,r)}\big)\Big)\Big(M^j_b\big(X^{(n,r)}\big) \Big)^2     \,+\, \mathit{o}(1) \,.\nonumber 
\intertext{For $\widetilde{X}_k^{(n,r)}:= M^k_b\big(X^{(n,r)}\big) $ I have }
 \, \frac{\kappa_b^2}{\widetilde{X}_k^{(n,r)} }  =&\, n \,-\,r \,-\,k  \,-\,\eta_{b}\log\bigg(\frac{\kappa_b^2}{\widetilde{X}_k^{(n,r)}}\bigg) \,+\,\sum_{\ell=1}^{k}\widehat{h}_b\Big(M^{-\ell}_b\big(\widetilde{X}_k^{(n,r)}\big)\Big)     \Big(M^{-\ell}_b\big( \widetilde{X}_k^{(n,r)} \big)\Big)^2\,+\, \mathit{o}(1) \,\nonumber
\\ \,  =&\, n\,-\,r \, -\,k \,-\,\eta_{b}\log\bigg(\frac{\kappa_b^2}{\widetilde{X}_k^{(n,r)}}\bigg) \,+\,F_b\big( \widetilde{X}_k^{(n,r)}\big)   \,+\, \mathit{o}(1) \,,
\end{align}
where $F_b:(0,\kappa_b^2]\rightarrow \R$ is defined by
$$F_b(x)\,:=\, \sum_{\ell=1}^{\infty}\widehat{h}_b\big(M^{-\ell}_b(x)\big)\big(M^{-\ell}_b( x)\big)^2 \,.  $$
Part (i) of Lemma~\ref{LemEstimates} and the boundedness of $\widehat{h}_b$ imply that the series defining $F_b(x)$ is absolutely convergent for all $x\in (0,\kappa_b^2]$ and that $F_b(x)=\mathit{O}(x)$ for $x\ll 1$.  In the last line of~(\ref{Yaper}), I was able to throw in the tail of the series since
\begin{align*}
 \sum_{\ell=k+1}^{\infty}\widehat{h}_b\Big(M^{-\ell}_b\big(\widetilde{X}_k^{(n,r)}\big)\Big)\Big(M^{-\ell}_b\big( \widetilde{X}_k^{(n,r)}\big)\Big)^2  \,=\,& \sum_{j=1}^{\infty}\widehat{h}_b\Big(M^{-j}_b\big( X^{(n,r)}\big)\Big)\Big(M^{-j}_b\big(  X^{(n,r)}\big)\Big)^2\\  \, = \,& F_b\big(  X^{(n,r)}\big)    \, = \,\mathit{O}\big( X^{(n,r)}\big)\, = \,\mathit{O}\Big(\frac{1}{n}\Big)\,.
 \end{align*}
Rearranging~(\ref{Yaper}),  I have
\begin{align}
 n \,-\,r \, -\,k  \,+\, \mathit{o}(1)\,  =& \,\frac{\kappa_b^2}{\widetilde{X}_k^{(n,r)} } \,+\,\eta_{b}\log\bigg(\frac{\kappa_b^2}{\widetilde{X}_k^{(n,r)}}\bigg)\,-\,F_b\big( \widetilde{X}_k^{(n,r)}\big) \,:=\,G_b\big(\widetilde{X}_k^{(n,r)}\big)   \,,\label{Ubdle}
\end{align}
where $G_b:(0,\kappa_b^2]\rightarrow \R$ has the $0<x\ll 1$ asymptotics
$$ G_b(x)\,=\,\frac{\kappa_b^2}{x}\,-\,\eta_{b}\log x\,+\,\eta_{b}\log\big(\kappa^2_b\big) \,+\,  \mathit{O}(x)\,.  $$
\vspace{.35cm}\\
\noindent \textbf{The function inverse of $\mathbf{G_b(x)}$:} I would like to solve for $\widetilde{X}_k^{(n,r)}$ in~(\ref{Ubdle}), but for this I need to show that the function inverse, $G_b^{-1}$, exists over an appropriate domain.  The function $F_b(x)$ has a bounded, continuous derivative, which can be seen through the inequalities
\begin{align}
\big|F_b'(x)\big|\,=\,&\left|2\sum_{k=1}^{\infty}\widehat{h}_b\big(M^{-k}_b(x)\big)M^{-k}_b( x)\frac{d}{dx}M^{-k}_b( x)  \,+\, \sum_{k=1}^{\infty}\widehat{h}_b'\big(M^{-k}_b(x)\big)\big(M^{-k}_b( x)\big)^2\right| \nonumber  \\
\leq \,&  2C\Big(\sup_{0 < x\leq \kappa_b^2} \big|\widehat{h}_b(x)\big|  \Big)\sum_{k=1}^{\infty} \frac{ M^{-k}_b( x)}{\big(1+x\frac{k}{\kappa_b^2}\big)^2  }\,+\,\Big(\sup_{0 < x\leq \kappa_b^2} \big|\widehat{h}_b'(x)\big|  \Big) S_b(x)\,,\nonumber 
\intertext{where $C>0$ comes through an application of part (iii) of Lemma~\ref{LemEstimates}.  As a consequence of part (ii) of Lemma~\ref{LemEstimates}, $M^{-k}_b( x)$ is bounded by a constant multiple of $x/\big(1+x\frac{k}{\kappa_b^2}\big)$.  Hence, there is a $\widehat{C}>0$ such that }
\leq \,&  \widehat{C}\sum_{k=1}^{\infty} \frac{ x}{\big(1+x\frac{k}{\kappa_b^2}\big)^3 }\,+\, \widehat{C} S_b(x)\nonumber  \\ \,\leq \,&\widehat{C}\int_0^\infty \frac{x}{(1+xt)^3}dt\,+\,\widehat{C}S_b(x)\,=\, \frac{1}{2}\widehat{C}+\widehat{C}S_b(x)  \,.
\end{align}
The sum in the first line above is viewed as a  Riemann lower bound for the integral.  The right side of the last line is bounded over bounded intervals and, in particular, $[0,1]$.

Note that $G_b$ is strictly decreasing over intervals $[0,\delta]$ for small enough $\delta>0$ since 
the derivative of $\kappa_b^2/x -\eta_{b}\log x$ blows up towards $-\infty$ for $0<x\ll 1$ and $F_b$ has a bounded derivative by the analysis above. Consequently, the function inverse $G^{-1}_b(y)$ exists and is differentiable on the interval $[  G_b(\delta)  ,\infty)$, and has  the $ y \gg 1$ asymptotics
\begin{align}\label{Heggazy}
 G^{-1}_b(y)\,=\, \frac{\kappa_b^2}{y} \,+\,\frac{\kappa_b^2\eta_{b}\log y}{y^2}\,+\,\mathit{O}\Big(\frac{1}{y^3}\Big)\,. 
 \end{align}
\text{}\vspace{.35cm}\\
\noindent \textbf{The limit of  $\mathbf{M^{n}_b\big(X^{(n,r)}   \big)}$ as $\mathbf{n\rightarrow \infty}$:} 
As long as $ \widetilde{X}_k^{(n,r)}$ is smaller than $\delta$ and $k\leq \min \big(n,u_{n}^{(\epsilon)}\big)$, then I can write
\begin{align}\label{Tizdu}
 G^{-1}_b\big(  n \,-\,r \, -\,k  \,+\, \mathit{o}(1)  \big)\,=\, \widetilde{X}_k^{(n,r)}\,. 
\end{align}
If  $-r>0$ is chosen to be sufficiently large, and then $n$ is chosen to be sufficiently large (possibly based on $r$), then I will have that
\begin{itemize}
\item  $s_k:=\frac{\kappa_b^2}{\widetilde{X}_k^{(n,r)}}$ remains above $\epsilon$ for all $ k\in [1, n]$ . Consequently $u_{n}^{(\epsilon)}\geq n$, and thus equation~(\ref{Yaper}) is valid for $k=n$.

\item $ \widetilde{X}_n^{(n,r)}$ will lie in the interval $[0,\delta]$, where $G_b$ is invertible.

\end{itemize}
Therefore, since $\widetilde{X}_n^{(n,r)}:=M^{n}_b\big(X^{(n,r)}   \big)$, applying~(\ref{Tizdu}) with $k=n$ completes the proof 
$$ \lim_{n\rightarrow \infty}M^{n}_b\big(X^{(n,r)}   \big) \,=\,G^{-1}_b\big( -r   + \mathit{o}(1)  \big)\,=\, G^{-1}_b(  -r    )\,. $$
The second equality uses that $G^{-1}_b$ is differentiable and hence continuous over its domain.   \vspace{.35cm}\\
\noindent \textbf{Properties (I)-(IV):} Now I will discuss the properties listed for the limit function $R_b(r)$.  The fact that $M_b\big(R_b(r)\big)=R_b(r+1)$ was implicitly already derived in the beginning of the proof, but I can clarify this idea as follows: 
\begin{align*}
M_b\big(R_b(r)\big)\,=\,M_b\Big(\lim_{n\rightarrow \infty} M^{n}_b\big(X^{(n,r)}  \big)  \Big)\,=\,&\lim_{n\rightarrow \infty}  M^{n+1}_b\big(X^{(n,r)} \big)\,=\,\lim_{n\rightarrow \infty}  M^{n}_b\big(M_b\big(X^{(n,r)} \big)  \big)\\  \,=\,&\lim_{n\rightarrow \infty}  M^{n}_b\bigg( \kappa_{b}^2 \bigg(\frac{ 1 }{n}  +\frac{ \eta_{b}\log n }{n^2}+\frac{r+1}{n^2}\bigg)\,+\,\mathit{o}\Big(\frac{1}{n^2}  \Big)   \bigg)\\  \,=\,& R_b(r+1)\,.
\end{align*}
The last line uses the convergence result that I have proved above.

To see property (II), first notice that the derivative of $R_{b,n}(r):=M_b^{n}\big(X^{(n,r)}\big)$ has the form
\begin{align*}
R_{b,n}^{ '}(r)\,:=\,\frac{d}{dr}M^n_b\big(X^{(n,r)}\big) \,=\,&\frac{\kappa_b^2}{n^2}\prod_{k=1}^n  M'_b\Big(M^{k-1}_b\big(X^{(n,r)}\big)    \Big)\\  \,=\,&\frac{\kappa_b^2}{n^2}\prod_{k=1}^n \Big(1+M^{k-1}_b\big(X^{(n,r)}\big)    \Big)^{b-1} \\ \,=\,& \widetilde{D}_b^{(n)}\Big(M_b^n\big(X^{(n,r)}\big)  \Big) \,=\, \widetilde{D}_b^{(n)}\big(R_{b,n}(r)  \big)\,,
\end{align*}
for $\widetilde{D}_b^{(n)}:\R_+\rightarrow \R_+  $ defined by 
$$ \widetilde{D}_b^{(n)}(x)\,:=\,\frac{\kappa_b^2}{n^2}\prod_{k=1}^n \Big(1+M^{-k}_b(x)    \Big)^{b-1}\,. $$
However, $\widetilde{D}_b^{(n)}$ converges uniformly over bounded intervals to a limit $D_b$  by part (III) of Lemma~\ref{LemEstimates}.  Hence, my convergence results as $n\rightarrow \infty$ can be summarized as follows:
\begin{itemize}
\item $R_{b,n}(r)$ converges uniformly over bounded intervals to a limit $R_b(r)$. 

\item $R_{b,n}^{ '}(r)\,=\,\widetilde{D}_b^{(n)}\big( R_{b,n}(r) \big) $ converges uniformly over bounded intervals  to $D_b\big(R_b(r)\big)   $.
 
\end{itemize}
Hence, 
$$R_{b,n}^{'}(r)\,=\, \lim_{N\rightarrow \infty} \frac{\kappa_b^2}{n^2}\prod_{k=1}^n \Big(1+ M^{-k}_b\big(R_b(r)\big)\Big)^{b-1}   \,=\,  \lim_{n\rightarrow \infty} \frac{\kappa_b^2}{n^2}\prod_{k=1}^n \Big(1+ R_b(r-k)\Big)^{b-1}     \, .  $$

For (III), the super-exponential growth of $R_b(r)$ as $r\rightarrow \infty$ follows easily from the identity (I).  The asymptotic behavior (IV) of $R_b(r)$ as $r\rightarrow -\infty$ follows from~(\ref{Heggazy}) since $R_b(r):=G_b^{-1}(-r)$.

\end{proof}

\begin{proof}[Proof of Corollary~\ref{CorVar}] Recall that $\varrho_{k}^{(2)}( \beta) :=  \textup{Var} \big( W_k( \beta )\big)$.  Then, $\varrho_{k}^{(2)}\big(\beta_{n, r}^{(b)}\big) = M^{k}_b\big( X^{(n,r)}   \big)$, where $X^{(n,r)}:=\varrho_{0}^{(2)}\big( \beta_{n, r}^{(b)}\big)$ has the large $n$ asymptotics
\begin{align*}
X^{(n,r)}  \,=\,\mathbb{E}\Bigg[\bigg(\frac{e^{\beta_{n, r}^{(b)}\omega}}{\mathbb{E}\big[  e^{\beta_{n, r}^{(b)}\omega} \big]   }-1   \bigg)^2 \Bigg]\,=\,  \kappa_{b}^2 \bigg(\frac{ 1 }{n}  +\frac{ \eta_{b}\log n }{n^2}+\frac{r}{n^2}\bigg)\,+\,\mathit{o}\Big(\frac{1}{n^2}  \Big) \,,
\end{align*}
where the error $\mathit{o}(1/n^2)$ is uniformly controlled over bounded intervals. Hence,  $ \textup{Var} \big( W_n( \beta_{n, r}^{(b)})\big)= M^{n}_b\big( X^{(n,r)}   \big)$ converges uniformly over bounded intervals to $R_b(r)$ by Lemma~\ref{LemVar}.

\end{proof}

\section{Convergence of the centered moments}\label{SecCentMom}

\subsection{Recursive relations  for higher moments  }\label{SecRecRel}

Recall that the partition function satisfies the distributional recursive  relation
\begin{align}
 W_{n+1}(\beta)-1\,\stackrel{d}{=}\, & \frac{1}{b}\sum_{i=1}^{b}\Bigg[ \prod_{j=1}^{b} W_{n}^{(i,j)}(\beta) \,-\,1\Bigg]\,, \nonumber
\intertext{where $W_{n}^{(i,j)}(\beta)$ are independent copies of $W_{n}(\beta)$.   The above can be written in terms of the centered variables $ R_{n}^{(i,j)}(\beta):= W_{n}^{(i,j)}(\beta)-1$ as}
 R_{n+1}(\beta) \,\stackrel{d}{=}\, & \frac{1}{b}\sum_{i=1}^{b} \sum_{A\subseteq \{1,\cdots,b\} } \prod_{j\in A} R_{n}^{(i,j)}(\beta)\,.\label{RecRel}
 \end{align}
Let $\varrho_{k}^{(m)}( \beta)$ denote the $m^{th}$ centered moment of $W_k( \beta )$.   Then~(\ref{RecRel}) implies that
\begin{align}
\varrho_{n+1}^{(m)}( \beta)\,=\, \frac{1}{b^m}\mathbb{E}\Bigg[\bigg(\sum_{i=1}^{b}\sum_{A\subseteq \{1,\cdots,b\} } \prod_{j\in A} R_{n}^{(i,j)}(\beta)\bigg)^m \Bigg] \, =  \, P_m\big( \varrho_{n}^{(2)}( \beta)  ,  \varrho_{n}^{(3)}( \beta) , \cdots , \varrho_{n}^{(m)}( \beta) \big)\label{RecHigherMom}
\end{align}
for a joint polynomial $P_m(y_2, \cdots, y_{m})$.  The variables $y_j$ are indexed to correspond to the $j^{th}$ centered moment in~(\ref{RecHigherMom}). The following proposition concerns the polynomials $P_m$.

\begin{proposition}\label{PropPoly}
The multivariate polynomial $P_m$ is of degree $d_b:=b\cdot \min (b,\lfloor m/2 \rfloor  )$ and satisfies the properties below.
\begin{enumerate}[(i).]

\item   $P_m(y_2,\cdots, y_{m})$ has no constant term, and its only linear term is $\frac{1}{b^{m-2}} y_{m}$.  In other terms, there exist polynomials $U_m:\R^{m-1}\rightarrow \R$, $V_m:\R^{m-2}\rightarrow \R$ 
such that
\begin{align}\label{PMRecur}
 P_m(y_2, \cdots, y_{m})\,=\,\frac{1}{b^{m-2}}y_{m}\,+\,y_{m}U_m(y_2,\cdots, y_{m})\,+\,V_m(y_2,\cdots, y_{m-1}) \, ,   
 \end{align}
where the polynomials $y_{m}U_m(y_2,\cdots, y_{m})$ and $V_m(y_2,\cdots,y_{m-1})$  have no constant or linear terms.

\item The polynomial  $V_m(y_2,\cdots,y_{m-1})$ is a linear combinations of monomials $y_{j_1}\cdots y_{j_\ell}$ 
for $1\leq \ell \leq d_b$ with 
$$j_1+\cdots +j_\ell  \geq  \begin{cases}m & \quad  \text{$m$ even,}  \\    m+1& \quad  \text{$m$ odd.}   \end{cases}$$
The polynomial $y_{m}U_m(y_2,\cdots, y_{m})$ is a linear combination of monomials with $j_1+\cdots +j_\ell\geq m+2$.

\item Suppose that for $0<x\ll 1$ there are constants $c_j \in \R$ such that the variables $y_{j}\equiv y_j(x)$ have the asymptotics 
 $$y_{j}\,=\,\begin{cases} c_{j}x^{\frac{j}{2}}+\mathit{O}\big(x^{\frac{j}{2}+1}\big)  & \quad \text{$j$ even,} \\ \mathit{O}\big(x^{\frac{j+1}{2}}\big)    &  \quad \text{$j$ odd.} \end{cases} $$
Then when $m$ is odd, $ V_m(y_2 ,     \cdots,  y_{m-1})$ is $ \mathit{O}\big(x^{\frac{1}{2}}\big) $, and when $m$  is even,
\begin{align*}
V_m(y_2 ,     \cdots,  y_{m-1})\,=\,&x^{\frac{m}{2} } \frac{1}{b^{m}}\frac{d^m}{dt^m}\Bigg( 1+\sum_{1\leq j<\frac{m}{2}}  c_{2j}\frac{t^{2j}}{(2j)!}  \Bigg)^{b^2}\bigg|_{t=0} \,+\,\mathit{O}(x)  \,, \text{ or equivalently,}\\
\,=\,&\frac{1}{b^{m}}\frac{d^m}{dt^m}\Bigg( 1+\sum_{1\leq j<\frac{m}{2}}  y_{2j}\frac{t^{2j}}{(2j)!}  \Bigg)^{b^2}\bigg|_{t=0} \,+\,\mathit{O}(x)  \,.
\end{align*}

\end{enumerate}

\end{proposition}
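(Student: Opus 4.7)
The overall approach is to exploit the independence of the $b$ branches in the recursion. Writing $b^m\varrho_{n+1}^{(m)}(\beta) = \mathbb{E}[(\sum_{i=1}^b T_i)^m]$, where $T_i := \prod_{j=1}^b W_n^{(i,j)}(\beta) - 1$ are i.i.d.\ copies of $T$ satisfying $\mathbb{E}[T]=0$, the multinomial theorem gives
\begin{align*}
b^m\varrho_{n+1}^{(m)}(\beta) \,=\, \sum_{(m_1,\ldots,m_b):\,\sum m_i = m} \binom{m}{m_1,\ldots,m_b}\prod_{i=1}^b \mathbb{E}\big[T^{m_i}\big].
\end{align*}
The mean-zero condition forces every surviving tuple to satisfy $m_i \in \{0\}\cup\{2,3,\ldots\}$, bounding the number of nonzero entries by $\min(b,\lfloor m/2\rfloor)$. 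Using $T = \prod_j(1+R^{(j)})-1$ and setting $\nu_k := \mathbb{E}[(1+R)^k] = 1 + \sum_{j=2}^k \binom{k}{j} y_j$, one expands
$\mathbb{E}[T^\ell] = \sum_{k=0}^\ell \binom{\ell}{k}(-1)^{\ell-k}\nu_k^b$,
a polynomial of total degree $\leq b$ in the $y$-variables; multiplying at most $\min(b,\lfloor m/2\rfloor)$ such factors yields total degree $\leq d_b$. Setting $y=0$ gives each $\nu_k=1$ and hence $\mathbb{E}[T^\ell]=(1-1)^\ell=0$ for $\ell\geq 1$, so $P_m$ has no constant term. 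For the linear coefficient of $y_j$, one computes $\partial_{y_j}\mathbb{E}[T^\ell]\big|_{y=0} = b\sum_k\binom{\ell}{k}(-1)^{\ell-k}\binom{k}{j}$, which via the identity $\binom{\ell}{k}\binom{k}{j} = \binom{\ell}{j}\binom{\ell-j}{k-j}$ collapses to $b\binom{\ell}{j}(1-1)^{\ell-j}$ and vanishes unless $\ell=j$. Consequently the only linear $y_m$ contribution comes from single-branch tuples $(m,0,\ldots,0)$, producing $b\cdot b = b^2$ in $b^m P_m$, i.e.\ $\frac{1}{b^{m-2}}y_m$ in $P_m$, and giving the decomposition in (i).

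For part (ii) I would supplement this with a more combinatorial representation: $\mathbb{E}[T^\ell] = \sum_{(S_1,\ldots,S_\ell)}\prod_j\varrho^{(c_j)}$, ranging over ordered tuples of nonempty $S_s\subseteq\{1,\ldots,b\}$, with $c_j := |\{s:j\in S_s\}|$ and $\varrho^{(0)}:=1$. Tuples with any $c_j=1$ vanish since $\varrho^{(1)}=0$, and the surviving monomial $\prod_{j:c_j\geq 2}y_{c_j}$ has weighted degree $\sum c_j = \sum_s|S_s|\geq \ell$. Multiplying across branches forces every monomial in $\prod_i \mathbb{E}[T^{m_i}]$ to have weighted degree $\geq \sum m_i = m$, yielding the bound on $V_m$. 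For $y_m U_m$, the $U_m$ factor has no constant term by (i), so contributes additional weighted degree $\geq 2$, giving $\geq m+2$ overall. The sharpening to $\geq m+1$ in the odd-$m$ case for $V_m$ will require ruling out the weighted-degree-$m$ tuples that avoid $y_m$, and I expect this step to hinge on parity-based cancellations inherent in the alternating-sum formula $\sum_{k=0}^{\ell}\binom{\ell}{k}(-1)^{\ell-k}\nu_k^b$; this is the most delicate combinatorial step.

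Finally, part (iii) is the asymptotic refinement under the scaling $y_j = c_j x^{j/2} + O(x^{j/2+1})$ for $j$ even and $y_j = O(x^{(j+1)/2})$ for $j$ odd. A monomial $\prod y_{j_i}$ is then of order $x^{(\sum j_i + k)/2}$, where $k$ counts the odd $j_i$'s and has the same parity as $\sum j_i$. For even $m$ the leading $x^{m/2}$ contribution must have all $j_i$ even with $\sum j_i=m$, which by the tuple expansion corresponds to configurations in which every $|S_s|=1$ (singletons) and every $c_j^{(i)}$ is even across all $b$ branches. To match the formula $\frac{1}{b^m}\frac{d^m}{dt^m}\big(1 + \sum_{1\leq j<m/2}c_{2j}\,t^{2j}/(2j)!\big)^{b^2}\big|_{t=0}$, I would package the leading-order tuple sum into a formal generating function in an auxiliary variable $t$: each singleton is an independent choice among $b$ indices, and combining the $m_i$ singletons within each branch with the sum over $b$ branches amounts to $b\cdot b = b^2$ independent selections, producing the exponent $b^2$, while the $(2j)!$ denominators absorb the multinomial factors from grouping singletons by label. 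For odd $m$, parity guarantees at least one odd $c_j$ in any minimum-order contribution, costing an extra $\sqrt{x}$ and giving $V_m = O(x^{(m+1)/2}) \subseteq O(x^{1/2})$. The principal obstacle I anticipate is the clean generating-function identification in the even-$m$ case: tracking the ordered-tuple multinomials and matching them term-by-term with the Taylor coefficients of the stated power.
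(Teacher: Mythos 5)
Your overall route is the same as the paper's (the paper simply asserts (i)--(ii) from the mean-zero property and, for (iii), restricts to singleton sets $A$ so that $b^mP_m$ reduces at leading order to the $m^{th}$ moment of a sum of the $b^2$ i.i.d.\ centered variables $R_n^{(i,j)}$); your multinomial-over-branches computation with $\nu_k=\mathbb{E}[(1+R)^k]$ is a legitimate fleshing-out of part (i), your ordered-tuple representation gives the bounds $j_1+\cdots+j_\ell\geq m$ for $V_m$ and $\geq m+2$ for $y_mU_m$, and your even-$m$ generating-function identification in (iii) is exactly the paper's argument. (Two small points: your degree argument only yields the upper bound $d_b$; exactness follows because all coefficients of $P_m$ are nonnegative counts, so the top-degree monomial, e.g.\ the one produced by taking $A=\{1,\dots,b\}$ in every factor across $\min(b,\lfloor m/2\rfloor)$ branches with $m_i\geq 2$, cannot cancel.)

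The genuine gap is the step you defer in (ii): the sharpening to $j_1+\cdots+j_\ell\geq m+1$ for odd $m$ cannot be rescued by ``parity-based cancellations in the alternating-sum formula,'' because there are no cancellations available: expanding $b^mP_m=\mathbb{E}\big[\big(\sum_i\sum_{A\neq\emptyset}\prod_{j\in A}R^{(i,j)}\big)^m\big]$ term by term shows that every coefficient of $P_m$ as a polynomial in $y_2,\dots,y_m$ is a nonnegative count (this nonnegativity is also asserted in Theorem~\ref{ThmMain}(II)), and the alternating signs in $\sum_k\binom{\ell}{k}(-1)^{\ell-k}\nu_k^b$ merely implement the inclusion--exclusion that produces those counts. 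In fact the literal claim fails for odd $m\geq5$: for $b=2$, $m=5$, the monomial $y_2y_3$ (weighted degree $5=m$, not divisible by $y_5$, hence belonging to $V_5$) appears with coefficient $120/2^5=15/4$, coming from the all-singleton tuples in $\mathbb{E}[T^5]$ (contributing $2\cdot 20$) plus the cross term $20\,\mathbb{E}[T^2]\mathbb{E}[T^3]\supset 20\cdot(2y_2)(2y_3)$. So the ``delicate combinatorial step'' you planned is not provable as stated. What is true, and is all that is needed both for part (iii) and for the later induction in Theorem~\ref{ThmAsy}, is precisely the parity observation you already make in your part (iii): any monomial of $V_m$ with $j_1+\cdots+j_\ell=m$ and $m$ odd must contain at least one odd index $j_i$, and under the scaling of (iii) (or under the bounds $R_b^{(j)}(r)=\mathit{O}(|r|^{-\lceil j/2\rceil})$ used later) each odd-indexed factor carries an extra half power, so $V_m=\mathit{O}\big(x^{\frac{m+1}{2}}\big)$. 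Replace the deferred step by this effective-weight statement (equivalently, restate (ii) with odd indices counted with weight $j_i+1$); as written, your plan for that step would fail, while the rest of the proposal stands.
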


\begin{proof}Parts (i) and (ii) follow easily from the defining relation~(\ref{RecHigherMom}) since the random variables $ R_{n}^{(i,j)}(\beta)$ have mean zero.  For part (iii), the case of $m$ odd follows from part (ii).  For $m$ even, I have that as $x\ll 1$
\begin{align}\label{Gring}
V_m(y_2 ,     \cdots,  y_{m-1})\,=\,&\bigg(\text{Lin.\ comb.\ of monomials $\prod_{i=1}^{\ell} y_{j_i}$ with  $\sum_{i=1}^{\ell}  j_i=m$ and $j_i<m$ }\bigg)\,+\,\mathit{O}\big( x^{\frac{m}{2}+1} \big)\,,\nonumber  \\
y_mU_m(y_2 ,     \cdots,  y_{m})\,=\,&\mathit{O}\big( x^{\frac{m}{2}+1 } \big)\,.
\end{align}
 The error on the top line of~(\ref{Gring}) is $\mathit{O}\big( x^{\frac{m}{2}+1}\big)$ rather than $\mathit{O}\big( x^{\frac{m+1}{2}}\big)$ since a monomial with $j_1+\cdots + j_\ell=m+1$ must have at least one odd term $j_i$.  A similar statement can be made about $P_m$:
\begin{align}
P_m(y_2 ,     \cdots,  y_{m})\,=\,&\bigg(\text{Lin.\ comb.\ of monomials $\prod_{i=1}^{\ell} y_{j_i}$ with  $\sum_{i=1}^{\ell}  j_i=m$  }\bigg)\,+\,\mathit{O}\big( x^{\frac{m}{2}+1} \big)\nonumber  \\  \,=\,&\,x^{\frac{m}{2} }\frac{1}{b^{m}}\frac{d^m}{dt^m}\Bigg( 1+\sum_{1\leq j\leq \frac{m}{2}} c_{2j}\frac{t^{2j}}{(2j)!}  \Bigg)^{b^2}\Bigg|_{t=0} \,+\,\mathit{O}\big( x^{\frac{m}{2}+1}\big)\,.\label{Nork}
\end{align}
The second inequality follows from~(\ref{RecHigherMom}) since there are $b^2$ random variables $R_n^{(i,j)}$ involved in the expression and only the  sets $A \subseteq \{1,\cdots, b\}$ containing a single element can give rise to the terms $y_{j_1}\cdots y_{j_\ell}$ with $j_1+\cdots + j_\ell=m$.   The generating function in~(\ref{Nork}) captures the relevant combinatorics for the coefficient of the leading term, $x^{\frac{m}{2}}$.  The above considerations give us the $x\ll 1$ asymptotics
\begin{align*}
V_m(y_2 ,     \cdots,  y_{m-1})\,=\,& P_m(y_2 ,     \cdots,  y_{m})\,-\,\frac{1}{b^{m-2}}y_m\,-\, y_{m}U_m(y_2 ,     \cdots,  y_{m}) \\
\,=\,& x^{\frac{m}{2} }\frac{1}{b^{m}}\frac{d^m}{dt^m}\Bigg( 1+\sum_{1\leq j\leq \frac{m}{2}} c_{2j}\frac{t^{2j}}{(2j)!}  \Bigg)^{b^2}\Bigg|_{t=0}\,-\,\frac{1}{b^{m-2}}c_m x^{\frac{m}{2}}
\,+\,\mathit{O}\big( x^{\frac{m}{2}+1}\big)\,\\
\,=\,& x^{\frac{m}{2} }\frac{1}{b^{m}}\frac{d^m}{dt^m}\Bigg( 1+\sum_{1\leq j < \frac{m}{2}} c_{2j}\frac{t^{2j}}{(2j)!}  \Bigg)^{b^2}\Bigg|_{t=0}
\,+\,\mathit{O}\big( x^{\frac{m}{2}+1}\big)\,.
\end{align*}

\end{proof}

\vspace{.3cm}

\noindent{\textbf{Notation and conventions:}}
\begin{itemize}

\item  For any $2\leq k<m$, I will interpret $P_k$ flexibly as a polynomial in $y_2$, $\dots$, $y_{m}$ that is independent of the variables $y_j$ for $j>k$:
$$ P_k(y_2,\cdots, y_{k})\,\equiv\,  P_k(y_2,\cdots, y_{m})    \,.   $$

\item For $x\in \R$ and $\mathbf{y}:=(y_3,\cdots, y_{m}) $, I define the vector-valued function $\vec{P}_m:\R^{m-1}\rightarrow \R^{m-2}$  
\begin{align*}
 \vec{P}_m (x,\mathbf{y}) \,:=\,\big(P_3(x,\mathbf{y}),\cdots, P_{m}(x,\mathbf{y})   \big) \,. 
 \end{align*}

\item   I define $\big(\mathbf{\widetilde{D}}\vec{P}_m\big) (x,\mathbf{y}) $ as the $b-2$ by $b-2$ matrix of partial derivatives  
$$\Big[\big(\mathbf{\widetilde{D}}\vec{P}_m\big) (x,\mathbf{y})\Big]_{i,j}\,=\, \frac{\partial}{\partial y_j} P_{i}(x,\mathbf{y})   \,$$
for $i,j\in \{3,4,\cdots, b\}$. I denote the partial derivative with respect to $x$ as $(\partial_1\vec{P}_m) (x,\mathbf{y})$.

\end{itemize}

\subsection{Iterating the recursive relation}

The following technical lemma shows how iterating the recursive relation~(\ref{RecHigherMom}) under a scaling defined through the variance maps $M_b$ yields a limit that is independent of the higher ($m\geq 3$) initial centered moments.   For a vector $\textbf{y}$, define $\|\textbf{y}\|_\infty :=\max_i |\textbf{y}_i|$, i.e., the max norm of  $\textbf{y}$.

\begin{lemma}\label{LemFunGeneral}
For $m\geq 3$ and $x\geq 0$, define $F_{n}^{(x)}:\R^{m-2}\rightarrow \R^{m-2}$ as 
$$F_{n}^{(x)}(\mathbf{y}):= \vec{P}_m \big(M^{-n}_b(x),\mathbf{y}\big)\,. $$

\begin{enumerate}[(i).]
\item For sufficiently small $\epsilon>0$ and all $  \| \mathbf{y} \|_\infty\leq \epsilon$, the limit
$$ \lim_{n\rightarrow \infty} F_{1}^{(x)}\circ\cdots     \circ F_{n}^{(x)}(\mathbf{y})\,=\,\big(H_3(x),  H_4(x),\cdots, H_m(x)\big)\,=:\,\vec{H}_m(x)$$
exists and is independent of the argument $\mathbf{y} \in \R^{m-2}$. Moreover, the convergence is uniform for all $ \|\mathbf{y}\|_{\infty}\leq \epsilon$ and $x$ in any bounded interval.

\item $H_m(x)$ is nonnegative and increasing on $[0,\infty)$ with $H_m(0)=0$ and $H_m(x)\nearrow \infty $ as $x\nearrow \infty$.

\item  The limit vector $\vec{H}_m(x)$ satisfies the recursion relation:
 $$ \vec{H}_m\big(M_b(x)\big)\,=\,\vec{P}_m \big(x,\vec{H}_m(x)\big)\,.  $$

\item The derivative of $\vec{H}_m(x)$ has the form
\begin{align*}
\frac{d}{dx}&\vec{H}_m(x)\\ &\,=\,\sum_{k=1}^{\infty} \Bigg( \prod_{j=1}^{k-1} (\widetilde{\mathbf{D}} \vec{P}_m)\Big(  M^{-j}_b(x)    , \vec{H}_m\big( M^{-j}_b(x)  \big)    \Big) \Bigg) (\partial_1 \vec{P}_m)\Big(M^{-k}_b(x), \vec{H}_m\big(M^{-k}_b(x)\big)\Big) \frac{d}{dx}M^{-k}_b(x)\,.
\end{align*}
The convergence of the series is uniform for $x\geq 0$ in bounded intervals.  In the above, I interpret a product $\prod_{i=1}^{n} A_i$ of square matrices $A_i$ as $A_1 A_2\cdots A_n$.  

\item For the $\epsilon>0$ from part (i), there is uniform convergence  for all  $\|\mathbf{y}\|_{\infty}\leq \epsilon$ and $x\geq 0$ in  any bounded interval:
$$ 
  \lim_{n\rightarrow \infty}\frac{d}{dx} F_{1}^{(x)}\circ\cdots     \circ F_{n}^{(x)}(\mathbf{y})\,=\,\frac{d}{dx}\vec{H}_m(x)\,.$$

\end{enumerate}

\end{lemma}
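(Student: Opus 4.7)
The plan is to exploit two facts: $M_b^{-n}(x)\to 0$ uniformly on bounded intervals (Lemma~\ref{LemEstimates}(ii)) and the fact that Proposition~\ref{PropPoly}(i) pins down the Jacobian $\widetilde{\mathbf{D}}\vec{P}_m$ at the origin as the diagonal matrix with entries $1/b^{k-2}$, $k=3,\ldots,m$, whose $\ell^\infty$-operator norm is $1/b<1$. By continuity, I fix $\epsilon,\delta>0$ and a constant $q\in(1/b,1)$ so that $\|\widetilde{\mathbf{D}}\vec{P}_m(a,\mathbf{y})\|_\infty\le q$ whenever $|a|\le\delta$ and $\|\mathbf{y}\|_\infty\le\epsilon$. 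Shrinking $\delta$ if necessary, I also arrange that $\vec{P}_m(a,\cdot)$ maps $\overline{B}_\epsilon\subset\R^{m-2}$ into itself in that range, using that $\vec{P}_m(a,\mathbf{0})\to\mathbf{0}$ as $a\searrow 0$ (no constant term, Proposition~\ref{PropPoly}(i)).

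For part (i), given $M>0$ there is a cutoff $N=N(M)$ so that $M_b^{-j}(x)\le\delta$ for all $j\ge N$ and $x\in[0,M]$, making $F_j^{(x)}$ a $q$-contraction of $\overline{B}_\epsilon$ for such $j$. A standard Cauchy estimate
\[
\big\|F_1^{(x)}\circ\cdots\circ F_n^{(x)}(\mathbf{y})-F_1^{(x)}\circ\cdots\circ F_{n+\ell}^{(x)}(\mathbf{y})\big\|_\infty\,\le\,C q^{n-N}
\]
then holds uniformly for $\|\mathbf{y}\|_\infty\le\epsilon$, $x\in[0,M]$, $\ell\ge 0$, where $C$ absorbs the $x$-uniform Lipschitz bounds on the initial maps $F_1^{(x)},\ldots,F_{N-1}^{(x)}$. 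This gives existence of the limit $\vec{H}_m(x)$, uniformity, and starting-point independence.

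Properties (ii) and (iii) are then algebraic. The recursion comes from the re-indexing identity $F_{k+1}^{(M_b(x))}=F_k^{(x)}$ combined with continuity of $\vec{P}_m$ in its first slot:
\[
\vec{H}_m(M_b(x))\,=\,\lim_{n\to\infty}\vec{P}_m\Big(x,\,F_2^{(M_b(x))}\circ\cdots\circ F_n^{(M_b(x))}(\mathbf{y})\Big)\,=\,\vec{P}_m\bigl(x,\vec{H}_m(x)\bigr).
\]
For (ii), the nonnegativity of the coefficients of $\vec{P}_m$ noted in Theorem~\ref{ThmMain}(II) makes each component of $\vec{P}_m$ componentwise monotone in its variables, which together with monotonicity of $M_b^{-j}$ transfers nonnegativity and monotonicity to $\vec{H}_m$. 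Moreover, $\vec{H}_m(0)=\mathbf{0}$ since $\mathbf{0}$ is a fixed point of $F_j^{(0)}$, and the unboundedness of $H_m$ as $x\to\infty$ follows by iterating the recursion.

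The remaining step, parts (iv) and (v), is a chain-rule computation. Setting $G_n(x):=F_1^{(x)}\circ\cdots\circ F_n^{(x)}(\mathbf{y})$ and $\mathbf{z}_{j+1}:=F_{j+1}^{(x)}\circ\cdots\circ F_n^{(x)}(\mathbf{y})$, telescoping the chain rule across the composition produces
\[
\frac{dG_n}{dx}=\sum_{k=1}^{n}\bigg(\prod_{j=1}^{k-1}(\widetilde{\mathbf{D}}\vec{P}_m)\bigl(M_b^{-j}(x),\mathbf{z}_{j+1}\bigr)\bigg)(\partial_1\vec{P}_m)\bigl(M_b^{-k}(x),\mathbf{z}_{k+1}\bigr)\,\tfrac{d}{dx}M_b^{-k}(x).
\]
The re-indexing identity $F_{j+1}^{(x)}\circ\cdots\circ F_n^{(x)}(\mathbf{y})\to\vec{H}_m(M_b^{-j}(x))$ (from part (i)) shows each summand converges pointwise to the corresponding term of the series claimed in (iv). The main obstacle is to promote this to uniform convergence so that differentiation commutes with the limit; I handle it by constructing a summable majorant for the $k$-th term, uniform in $n$ and $x\in[0,M]$. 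Indeed, for $k\ge N$ the product of Jacobians is bounded in operator norm by $C q^{k-N}$, the $\partial_1\vec{P}_m$ factor is bounded on compacts, and the factor $|\tfrac{d}{dx}M_b^{-k}(x)|\le C(1+\tfrac{k}{\kappa_b^2}x)^{-2}$ (Lemma~\ref{LemEstimates}(iii)) is itself summable in $k$. Dominated convergence then yields uniform convergence of the series; combined with the uniform convergence $G_n\to\vec{H}_m$ from part (i), this lets me pass the $n\to\infty$ limit inside the derivative, giving (iv) and (v).
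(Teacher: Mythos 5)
Your proposal is correct and takes essentially the same approach as the paper: the lower-triangular Jacobian of $\vec{P}_m$ with diagonal $1/b^{i-2}$ gives a contraction on an invariant ball $\|\mathbf{y}\|_\infty\le\epsilon$ once $M_b^{-j}(x)$ is small, the re-indexing identity yields (iii), and (iv)--(v) follow from the chain-rule series dominated by $Cq^{k}\big(1+k x/\kappa_b^2\big)^{-2}$ via Lemma~\ref{LemEstimates}(iii), exactly as in the paper. The only organizational difference is that you absorb the finitely many non-contracting initial maps into a constant through a cutoff $N(M)$ on a bounded interval, whereas the paper first proves everything for $x\le\epsilon$ and then extends via $\vec{H}_m(x)=F_{1}^{(x)}\circ\cdots\circ F_{\ell}^{(x)}\big(\vec{H}_m\big(M_b^{-\ell}(x)\big)\big)$.
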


\begin{proof} As a consequence of part (i) of by Proposition~\ref{PropPoly}, the matrix of partial derivatives $\mathbf{D}F_{n}^{(x)}$ has entries $3\leq i,j\leq m$:
\begin{align}
\big[\mathbf{D}F_{n}^{(x)}(\mathbf{y})\big]_{i,j}\,=\,\frac{\partial}{\partial y_j}P_{i}\big(M^{-n}_b(x),y_3 ,\cdots ,y_{i}\big) \,,\nonumber
\end{align}
satisfying
\begin{align*}
\big[\mathbf{D}F_{n}^{(x)}\big]_{i,j}(\mathbf{y})\,=\,\begin{cases} 0  &\quad  i< j\,,  \\  \frac{1}{b^{i-2}} &\quad i=j\,, \\ \text{polynomial in $x$, $y_3$,$\cdots$, $y_i$ with no constant term} &\quad i>j \,. \end{cases}
\end{align*}
Thus when $x$, $\|\mathbf{y}\|_\infty$ are small, $\mathbf{D}F_{n}^{(x)}(\mathbf{y})$ is close to the diagonal matrix $ \big[\frac{1}{b^{i-2}}\delta_{i,j}\big]$.

Define the sequence $s_{k}^{(n)}\in \R^{m-2}$ as $s_0^{(n)}=\mathbf{y}$ and 
$$s_{k}^{(n)}\,=\,  F_{n-k+1}^{(x)}\circ\cdots     \circ F_{n}^{(x)}(\mathbf{y}) $$
for $1\leq k\leq n$.  For a square matrix $A$, let $\|A\|:=\max_{i}\sum_j|A_{i,j}| $ denote the operator norm of $A$ with respect to the max norm.
 By part (i) of Proposition~\ref{PropPoly}, the $s_{k}^{(n)}$'s satisfy the recurrence relation
\begin{align}\label{SRecur}
\big[s_{k+1}^{(n)}\big]_{i}\,=\,\frac{1}{b^{m-2}}\big[s_k^{(n)}\big]_{i} \,+\,  \big[s_k^{(n)}\big]_{i}  U_i\Big( M^{-(n-k)}_b(x) , s_{k}^{(n)} \Big)\,+\,V_i\Big( M^{-(n-k)}_b(x), s_{k}^{(n)} \Big)  \end{align}
for $0\leq k<n$.  Pick $\epsilon>0$ small enough so  that for all $i\in\{3,\cdots,m\}$
\begin{align}
\sup_{0<\|\mathbf{y}\|_\infty\leq\epsilon}  \big| &V_i(\mathbf{y} )\big|\,\leq \,\frac{\epsilon}{b^{i-2}} \,,\hspace{1cm}\text{and}\hspace{1cm}  \sup_{0<\|\mathbf{y}\|_\infty\leq \epsilon} \big| U_{i}(\mathbf{y})\big|\,\leq \,\frac{b-1}{2b^{i-2}}\,,\nonumber \\ &\sup_{ x, \|\mathbf{y}\|_\infty\leq\epsilon}  \Big\| \mathbf{D}F_{n}^{(x)}(\mathbf{y})\,-\,\Big[\frac{1}{b^{i-2}}\delta_{i,j}\Big]\Big\|  \,\leq \,\frac{b-1}{2b}    \,.  \label{DerDef}
\end{align}
If $M^{-(n-k)}_b(x)\leq \epsilon$ and $\|s_{k}^{(n)}\|_{\infty}\leq \epsilon$, then  the recursion relation~(\ref{SRecur}) implies that $\|s_{k+1}^{(n)}\|_{\infty}\leq \epsilon$.  Therefore, by induction, if $x,\|\mathbf{y}\|_\infty\leq \epsilon$, then the $\|s_{k}^{(n)}\|_\infty$'s stay below $\epsilon$ for all $k\in \mathbb{N}$.  

\vspace{.3cm}

\noindent Part (i): Let $\|\mathbf{y}\|_{\infty}\leq \epsilon$ for $\epsilon>0$ chosen above.  Also, I will temporarily assume that $ x\leq \epsilon$.  To see that the sequence $F_{1}^{(x)}\circ\cdots     \circ F_{n}^{(x)}(\mathbf{y}) $ is Cauchy, first notice that for $N>n$ I can write
\begin{align*}
F_{1}^{(x)}\circ\cdots     \circ F_{N}^{(x)}(\mathbf{y})\,=\,F_{1}^{(x)}\circ\cdots     \circ F_{n}^{(x)}\big(s_{N-n}^{(N)}\big)\,.
\end{align*}
By the remarks above, since $x,\|\mathbf{y}\|_\infty\leq \epsilon$, I have that $\|s_{N-n}^{(N)}\|_\infty \leq  \epsilon $, and thus  the first inequality below.
\begin{align*}
\Big\| F_{1}^{(x)}\circ\cdots     \circ F_{N}^{(x)}(\mathbf{y})\,-\, F_{1}^{(x)}\circ\cdots     \circ F_{n}^{(x)}(\mathbf{y})  \Big\|_{\infty}\,= \,&\Big\|F_{1}^{(x)}\circ\cdots     \circ F_{n}^{(x)}\big(s_{N-n}^{(N)}\big)\,-\,F_{1}^{(x)}\circ\cdots     \circ F_{n}^{(x)}(\mathbf{y}) \Big\|_{\infty}\\
\,\leq \,&2\epsilon\sup_{ x, \|\mathbf{y}\|_{\infty}\leq \epsilon}\big\|\mathbf{D} F_{1}^{(x)}\circ\cdots     \circ F_{n}^{(x)}(\mathbf{y})\big\|    \,\\
\,\leq \,&2\epsilon\Big(\frac{b+1}{2b}  \Big)^n
\end{align*}
To see the second inequality above, first notice that by the chain rule
\begin{align*}
 \mathbf{D} F_{1}^{(x)}\circ\cdots     \circ F_{n}^{(x)}(\mathbf{y})\,=\, \prod_{k=1}^{n} \big(\mathbf{D}F_k^{(x)}\big)\big( F_{k+1}^{(x)}\circ\cdots    \circ  F_{n}^{(x)}(\mathbf{y})\big)    \,=\,\prod_{k=1}^{n} \big(\mathbf{D}F_k^{(x)}\big)(s_{n-k}^{(n)}) \,.
 \end{align*}
Moreover, since $x,\|s_{n-k}^{(n)}\|_\infty\leq \epsilon$, the inequality~(\ref{DerDef}) implies
\begin{align}\label{Blim}
\big\|\mathbf{D}  F_{1}^{(x)}\circ\cdots     \circ F_{n}^{(x)}(\mathbf{y})\big\|\,\leq \, \Big(\frac{b+1}{2b}  \Big)^n\,.
\end{align}
Therefore, the statement of (i) holds for $x\leq \epsilon$.

To extend the analysis to $x>\epsilon$, let $\ell\in \mathbb{N}$ be the smallest value such that $M_b^{-\ell}(x)\leq \epsilon $.  Then,
\begin{align}\label{Hezel}
\vec{H}_m(x)\,:=\, \lim_{n\rightarrow \infty}F_{1}^{(x)}\circ\cdots     \circ F_{n}^{(x)}(\mathbf{y})\,=\, &  \lim_{n\rightarrow \infty}F_{1}^{(x)}\circ\cdots     \circ F_{\ell}^{(x)}\Big(F_{\ell+1}^{(x)}\circ\cdots     \circ F_{n}^{(x)}(\mathbf{y})\Big)\nonumber \\ \,=\,& \lim_{n\rightarrow \infty} F_{1}^{(x)}\circ\cdots     \circ F_{\ell}^{(x)}\Big( F_{1}^{(M_b^{-\ell}(x))}\circ\cdots     \circ F_{n-\ell}^{(M_b^{-\ell}(x))}(\mathbf{y})\Big) \,\nonumber
 \\ \,=\,&F_{1}^{(x)}\circ\cdots     \circ F_{\ell}^{(x)}\Big( \lim_{n\rightarrow \infty}  F_{1}^{(M_b^{-\ell}(x))}\circ\cdots     \circ F_{n-\ell}^{(M_b^{-\ell}(x))}(\mathbf{y})\Big) \nonumber
 \\ \,=\,&F_{1}^{(x)}\circ\cdots     \circ F_{\ell}^{(x)}\big(\vec{H}_m\big(M_b^{-\ell}(x)\big)\big) \,.
 \end{align}
The limit on the third line exists by the result above.

\vspace{.3cm}

\noindent Part (ii): The components of $\vec{H}_m(x)$ are nonnegative, increasing functions since the multivariate polynomials $P_j(x,\mathbf{y})$ defining the functions  $F_{n}^{(x)}$, $n\in \mathbb{N}$ have nonnegative coefficients.

\vspace{.3cm}

\noindent Part (iii): The identity $\vec{H}_m\big(M_b(x)\big)=\vec{P}_m\big(x,\vec{H}_m(x)\big)$ follows from~(\ref{Hezel}) by setting $\ell=1$ and replacing $x$ by $M_{b}(x)$.

\vspace{.3cm}

\noindent Parts (iv) and (v): For $\epsilon>0$ as above, let us assume $\|\textbf{y}\|_\infty\leq \epsilon $.   By the chain rule,
\begin{align}\label{Heurashna}
 \frac{d}{dx} F_{1}^{(x)}\circ\cdots     \circ F_{n}^{(x)}(\mathbf{y})\,=\,&\sum_{k=1}^{n}\mathbf{D}F_{1}^{(x)}\circ\cdots     \circ F_{k-1}^{(x)}(z)\Big|_{z = F_{k}^{(x)}\circ\cdots     \circ F_{n}^{(x)}(\mathbf{y})  }\nonumber \\  &\, \times (\partial_1 \vec{P}_m)\Big(M^{-k}_b(x),  F_{k+1}^{(x)}\circ\cdots     \circ F_{n}^{(x)}(\mathbf{y}) \Big)\frac{d}{dx}M^{-k}_b(x)\,.
\end{align}
For $1<k<n$,  part (i) implies the second equality below:
\begin{align*}
\lim_{n\rightarrow \infty} F_{k+1}^{(x)}\circ\cdots     \circ F_{n}^{(x)}(\mathbf{y})\,=\,\lim_{n\rightarrow \infty}  F_{1}^{(M^{-k}_b(x))}\circ\cdots     \circ F_{n-k}^{(M^{-k}_b(x))}(\mathbf{y})\,=\,\vec{H}_m\big( M^{-k}_b(x) \big)\,.
\end{align*}
Thus, a single term from the sum~(\ref{Heurashna}) has the $n\rightarrow \infty$ limit
\begin{align}
 \lim_{n\rightarrow \infty}&\mathbf{D}F_{1}^{(x)}\circ\cdots     \circ F_{k-1}^{(x)}(z)\Big|_{z = F_{k}^{(x)}\circ\cdots     \circ F_{n}^{(x)}(\mathbf{y})  }  (\partial_1 \vec{P}_m)\Big(M^{-k}_b(x), F_{k+1}^{(x)}\circ\cdots     \circ F_{n}^{(x)}(\mathbf{y}) \Big)\frac{d}{dx}M^{-k}_b(x)\nonumber  \\  &\, = \,\mathbf{D}F_{1}^{(x)}\circ\cdots     \circ F_{k-1}^{(x)}(z)\Big|_{z = \vec{H}_m\big( M^{-(k-1)}_b(x) \big)  }  (\partial_1 \vec{P}_m)\Big(M^{-k}_b(x), \vec{H}_m\big( M^{-k}_b(x) \big)\Big)\frac{d}{dx}M^{-k}_b(x)\,.\label{Dintle}
 \end{align}
The convergence is also uniform for $\|\mathbf{y}\|_{\infty}\leq\epsilon$ and $x\geq 0$ in any bounded interval since  $\mathbf{D}F_{1}^{(x)}\circ\cdots     \circ F_{k-1}^{(x)}(z)$ and $\partial_1 \vec{P}_m$ are vectors of  polynomials, and thus uniformly continuous in bounded regions of $(x,\mathbf{y})\in \R^{n-1}$. 

Next I will obtain a  bound for a single term from the sum~(\ref{Heurashna}).  As in the proof of part (i), I will temporarily assume $x\in[0,\epsilon]$.      A single  term has the bound
\begin{align}\label{Quatible}
\bigg\|\big(\mathbf{D}F_{1}^{(x)}&\circ\cdots     \circ F_{k-1}^{(x)}\big)(z)\Big|_{z = F_{k}^{(x)}\circ\cdots     \circ F_n^{(x)}(\mathbf{y})  }  (\partial_1 \vec{P}_m)\Big(M^{-k}_b(x), F_{k+1}^{(x)}\circ\cdots     \circ F_n^{(x)}(\mathbf{y}) \Big)\frac{d}{dx}M^{-k}_b(x)\bigg\|_\infty\nonumber \\  &\, \leq \,c\Big(\frac{b+1}{2b}  \Big)^k\,   \frac{1}{\big(1+k\frac{x}{\kappa_b^2}\big)^2} 
\end{align}
for some $c>0$.  The above uses~(\ref{Blim}) to bound the first term on the left side and part (iii) of Lemma~\ref{LemEstimates} to bound the rightmost scalar term, $\frac{d}{dx}M^{-k}_b(x)$,  by a multiple of $ \big(1+k\frac{x}{\kappa_b^2}\big)^{-2}$.  For the second term I have merely used that $ (\partial_1 \vec{P}_m)(x, \mathbf{y} )$ is bounded  for all $x,\|\mathbf{y}\|_\infty\leq \epsilon$.  It follows from~(\ref{Quatible}) that the sum~(\ref{Heurashna}) is uniformly convergent for $x,\|\mathbf{y}\|_\infty\leq \epsilon$ as $n\rightarrow \infty$.  Therefore, if $x, \|\textbf{y}\|_{\infty}\leq \epsilon$, I have
\begin{align*}
\lim_{n\rightarrow \infty} \frac{d}{dx} F_{1}^{(x)}\circ\cdots     \circ F_{n}^{(x)}(\mathbf{y})\,=\,&\sum_{k=1}^{\infty}  \prod_{j=1}^{k-1} (\widetilde{\mathbf{D}} \vec{P}_m)\Big(  M^{-j}_b(x)    , \vec{H}_m\big( M^{-j}_b(x)  \big)    \Big) \\ &\times(\partial_1 \vec{P}_m)\Big(M^{-k}_b(x),\vec{H}_m\big(M^{-k}_b(x)\big)\Big) \frac{d}{dx}M^{-k}_b(x)\,.
\end{align*}
Since the convergence above is uniform over bounded intervals, it follows that the above is equal to $\frac{d}{dx}\vec{H}_m(x)$.

The results can be extended from $x\leq \epsilon$ to all of $[0,\infty)$ by applying the identity in part (ii) and the fact that $\cup_j M_b^j([0,\epsilon])=[0,\infty)$.

\end{proof}

\subsection{Proof of Theorem~\ref{ThmMain} }\label{SecMainProof}

Now I am prepared to prove the results stated in Theorem~\ref{ThmMain} except for property (III), which I postpone to the next section. The key steps in the proof involve applications of Lemma~\ref{LemVar} and Lemma~\ref{LemFunGeneral}.
\begin{theorem}\label{ThmMomConv}
Let $\beta_{n,r}^{(b)}$ be defined as in~(\ref{BetaForm}). 
\begin{enumerate}[(i).]

\item  For any $m\geq 2$ and $r\in \R$, there is convergence as $n\rightarrow \infty$ of the moments
$$ \varrho_{n}^{(m)}\big( \beta_{n,r}^{(b)}\big) \quad \longrightarrow \quad  R_b^{(m)}(r)   \, .  $$

\item  The limit functions $R_b^{(m)}(r)$ are positive, increasing, and  satisfy 
$$\lim_{r\rightarrow -\infty}R_b^{(m)}(r)\,=\,0  \hspace{.5cm}\text{and}  \hspace{.5cm} \lim_{r\rightarrow \infty}R_b^{(m)}(r)\,=\,\infty \,.   $$

\item For all $r\in \R$,
   $$ P_m\Big(R_b^{(2)}(r),\cdots , R_b^{(m)}(r)\Big)\,=\, R_b^{(m)}(r+1) \,.$$

\item  For $U_m$ and $V_m$ defined as in Proposition~\ref{PropPoly},
\begin{align*}
 R_b^{(m)}(r)\,=\, \sum_{k=1}^{\infty}&V_m\Big(R_b^{(2)}(r-k),\cdots, R_{b}^{(m-1)}(r-k)\Big)\\  &  \times \prod_{j=1}^{k-1}\bigg(\frac{1}{b^{m-2}}\,+\, U_m\Big(R_b^{(2)}(r-j),\cdots, R_b^{(m)}(r-j)\Big)\bigg)\,.
\end{align*}

\item  The derivative of $R_b^{(m)}(r)$ given by the last component of the vector equation 
\begin{align*}
\big(R_b^{(2)\,'}(r),\cdots, R_b^{(m)\,'}(r)\big)\,=\,&\sum_{k=1}^{\infty} \prod_{j=1}^{k-1} (\mathbf{\widetilde{D}} \vec{P}_m)\Big( R_b^{(2)}(r-j),\cdots, R_b^{(m)}(r-j)    \Big)\\ & \times  (\partial_1 \vec{P}_m)\Big(  R_b^{(2)}(r-k),\cdots, R_b^{(m)}(r-k)   \Big) R_b^{(2)\,'}(r-k)\,.
\end{align*}

\end{enumerate}

\end{theorem}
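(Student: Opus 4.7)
The plan is to reduce every part of Theorem~\ref{ThmMomConv} to a single application of Lemma~\ref{LemFunGeneral} after rewriting the centered-moment iteration as a composition of the maps $F_k^{(x)}$ at a parameter $x$ tied to the limiting variance. The base case $m=2$ is already settled by Corollary~\ref{CorVar}, so I set $R_b^{(2)}(r):=R_b(r)$ and proceed by induction on $m\geq 3$.

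The key identification is as follows. Let $x_n:=\varrho_n^{(2)}\bigl(\beta_{n,r}^{(b)}\bigr)$ and $\vec{\varrho}_k:=\bigl(\varrho_k^{(3)}(\beta_{n,r}^{(b)}),\ldots,\varrho_k^{(m)}(\beta_{n,r}^{(b)})\bigr)$. Because $\varrho_k^{(2)}\bigl(\beta_{n,r}^{(b)}\bigr)=M_b^{-(n-k)}(x_n)$ for $0\leq k\leq n$, the recursion $\vec{\varrho}_{k+1}=\vec{P}_m(\varrho_k^{(2)},\vec{\varrho}_k)$ from Proposition~\ref{PropPoly} is precisely $\vec{\varrho}_{k+1}=F_{n-k}^{(x_n)}(\vec{\varrho}_k)$, so
\[
\vec{\varrho}_n \;=\; F_1^{(x_n)}\circ\cdots\circ F_n^{(x_n)}\!\bigl(\vec{\varrho}_0\bigr).
\]
The initial moments satisfy $\varrho_0^{(j)}\bigl(\beta_{n,r}^{(b)}\bigr)=\mathit{O}\bigl(n^{-j/2}\bigr)$, so $\vec{\varrho}_0$ lies inside the $\epsilon$-ball of Lemma~\ref{LemFunGeneral}(i) for all $n$ large, while Corollary~\ref{CorVar} gives $x_n\to R_b(r)$ with $x_n$ confined to a bounded interval. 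The joint uniform convergence of Lemma~\ref{LemFunGeneral}(i) in $(x,\mathbf{y})$, together with the continuity of $\vec{H}_m$, then yields $\vec{\varrho}_n\to\vec{H}_m(R_b(r))$, and I define $R_b^{(m)}(r)$ as the $m$-th component of $\vec{H}_m(R_b(r))$. This proves part (i).

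The remaining claims follow quickly. Property (iii) is the $m$-th component of the identity $\vec{H}_m(M_b(x))=\vec{P}_m(x,\vec{H}_m(x))$ from Lemma~\ref{LemFunGeneral}(iii), evaluated at $x=R_b(r)$ and using $M_b(R_b(r))=R_b(r+1)$. Property (ii) combines Lemma~\ref{LemFunGeneral}(ii), which gives $H_m\geq 0$, increasing, with $H_m(0)=0$ and $H_m(\infty)=\infty$, with the corresponding monotonicity and limits of $R_b(r)$ from Lemma~\ref{LemVar}. For (iv), I iterate (iii) backwards using the decomposition $P_m=\tfrac{1}{b^{m-2}}y_m+y_m U_m+V_m$ from Proposition~\ref{PropPoly}(i); the telescoping remainder after $N$ steps has the form $R_b^{(m)}(r-N)\prod_{j=1}^{N}\bigl[\tfrac{1}{b^{m-2}}+U_m(\ldots)\bigr]$, which vanishes as $N\to\infty$ because $R_b^{(j)}(r-N)\to 0$ drives $U_m\to 0$ and the bracket down to $1/b^{m-2}<1$. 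Finally, (v) is the chain rule applied to $R_b^{(m)}(r)=H_m(R_b(r))$, combined with the derivative formula in Lemma~\ref{LemFunGeneral}(iv) and the identity $R_b(r-k)=M_b^{-k}(R_b(r))$, which converts $(M_b^{-k})'(R_b(r))R_b'(r)$ into $R_b'(r-k)$.

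The main obstacle is essentially already packaged into Lemma~\ref{LemFunGeneral}: the contraction estimate $\|\mathbf{D}F_k^{(x)}\|\leq(b+1)/(2b)<1$ near the origin is what lets the iteration forget its small initial data and supplies the uniformity needed to pass from the varying parameter $x_n$ to the fixed limit $R_b(r)$. At the theorem level the only delicate verifications are that $x_n$ stays in a bounded interval and that $\vec{\varrho}_0(\beta_{n,r}^{(b)})$ stays inside the prescribed $\epsilon$-ball, both of which are immediate from the asymptotics~(\ref{BetaForm}) and Corollary~\ref{CorVar}.
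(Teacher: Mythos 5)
Your proposal is correct and takes essentially the same route as the paper: you rewrite the centered-moment recursion as the composition $F_{1}^{(x_n)}\circ\cdots\circ F_{n}^{(x_n)}$ with $x_n=\varrho_n^{(2)}\big(\beta_{n,r}^{(b)}\big)$ exactly as in the paper's identity~(\ref{Diffle}), invoke Corollary~\ref{CorVar} for $x_n\to R_b(r)$, and then read off parts (ii)--(v) from the corresponding parts of Lemma~\ref{LemFunGeneral} and Proposition~\ref{PropPoly} just as the paper does. The only (harmless) difference is in part (i), where you handle the varying parameter via the joint uniform convergence in $(x,\mathbf{y})$ plus continuity of $\vec{H}_m$, whereas the paper controls $\big|x_n-R_b^{(2)}(r)\big|$ through the uniform derivative bound of Lemma~\ref{LemFunGeneral}(v) in a triangle-inequality estimate.
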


\begin{proof} Part (i).  By Corollary~\ref{CorVar}, as $n\rightarrow \infty$
$$\varrho_{n}^{(2)}\big( \beta_{n,r}^{(b)}\big)\quad\longrightarrow  \quad R_b^{(2)}(r)\,.   $$
Define $Y^{(n,r)}:=\varrho_{n}^{(2)}\big( \beta_{n,r}^{(b)}\big)$, and let the functions $F_{k}^{(x)}:\R^{m-1}\rightarrow \R^{m-1}$ be defined as in the proof of Lemma~\ref{LemFunGeneral}.
   Then I can write the vector of moments $\varrho_{n}^{(i)}\big( \beta_{n,r}^{(b)}\big)$ for $3\leq i\leq m$ in terms of $Y^{(n,r)}$ and the vector of moments $\varrho_{0}^{(i)}\big( \beta_{n,r}^{(b)}\big)$ for $3\leq i\leq m$ as
\begin{align}\label{Diffle}
\Big(\varrho_{n}^{(3)}\big( \beta_{n,r}^{(b)}\big),\cdots, \varrho_{n}^{(m)}\big( \beta_{n,r}^{(b)}\big)  \Big)\,=\, F_{1}^{(Y^{(n,r)})}\circ\cdots     \circ F_{n}^{(Y^{(n,r)})}\Big(\varrho_{0}^{(3)}\big( \beta_{n,r}^{(b)}\big),\cdots, \varrho_{0}^{(m)}\big( \beta_{n,r}^{(b)}\big)  \Big)\,.
\end{align}
For any fixed $\epsilon>0$, I can pick $-r>0$ large enough so that  $\varrho_{n}^{(2)}( \beta_{n,r}^{(b)})\leq\epsilon$  for large enough $n\in \mathbb{N}$.  Moreover, for  $n\gg 1$,
$$\varrho_{0}^{(i)}( \beta_{n,r}^{(b)})\,=\, \mathbb{E}\Bigg[\bigg( \frac{e^{ \beta_{n,r}^{(b)}\omega}}{\mathbb{E}\big[ e^{ \beta_{n,r}^{(b)}\omega}  \big]   }-1\bigg)^i\Bigg] \,=\, \mathit{O}\big(  n^{-\frac{i}{2}} \big) \,.    $$
Thus the above will be smaller in absolute value than any fixed $\epsilon>0$.  Pick $\epsilon>0$ so that the conclusions of part (i) of Lemma~\ref{LemFunGeneral} hold. By the triangle inequality,
\begin{align}\label{Yuigle}
\Big\| \Big(\varrho_{n}^{(3)}\big(& \beta_{n,r}^{(b)}\big), \cdots,\varrho_{n}^{(m)}\big( \beta_{n,r}^{(b)}\big)\Big) \,-\,\vec{H}_m\big(R_b^{(2)}(r)\big)\Big\|_\infty\nonumber \\   \,\leq \,&\Big\| \Big(\varrho_{n}^{(3)}\big( \beta_{n,r}^{(b)}\big),\cdots,\varrho_{n}^{(m)}\big( \beta_{n,r}^{(b)}\big)\Big) \,-\,F_{1}^{(R_b^{(2)}(r))}\circ\cdots     \circ F_{}^{(R_b^{(2)}(r))}\Big(\varrho_{0}^{(3)}\big( \beta_{n,r}^{(b)}\big),\cdots,\varrho_{0}^{(m)}\big( \beta_{n,r}^{(b)}\big)\Big) \Big\|_\infty \nonumber \\ & \,+\, \Big\|F_{1}^{(R_b^{(2)}(r))}\circ\cdots     \circ F_{n}^{(R_b^{(2)}(r))}\Big(\varrho_{0}^{(3)}\big( \beta_{n,r}^{(b)}\big),\cdots,\varrho_{0}^{(m)}\big( \beta_{n,r}^{(b)}\big)\Big)\,-\, \vec{H}_m\big(R_b^{(2)}(r)\big)  \Big\|_\infty \,.
\end{align}
 Then since  $\varrho_{n}^{(2)}\big( \beta_{n,r}^{(b)}\big)\leq \epsilon $ and $ \max_{1\leq i\leq m}|\varrho_{0}^{(i)}( \beta_{n,r}^{(b)})|\leq \epsilon $ hold for large enough $n$, the first term above is bounded by
\begin{align}
\Big\|  \Big(\varrho_{n}^{(3)}\big(& \beta_{n,r}^{(b)}\big),\cdots,\varrho_{n}^{(m)}\big( \beta_{n,r}^{(b)}\big)\Big) \,-\,F_{1}^{(R_b^{(2)}(r))}\circ\cdots     \circ F_{n}^{(R_b^{(2)}(r))}\Big(\varrho_{0}^{(3)}\big( \beta_{n,r}^{(b)}\big),\cdots,\varrho_{0}^{(m)}\big( \beta_{n,r}^{(b)}\big)\Big) \Big\|_\infty \nonumber\\
= \bigg\| &\Big( F_{1}^{(Y^{(n,r)})}\circ\cdots     \circ F_{n}^{(Y^{(n,r)})}(\mathbf{y})-F_{1}^{(R_b^{(2)}(r))}\circ\cdots     \circ F_{n}^{(R_b^{(2)}(r))}(\mathbf{y})\Big)\Big|_{\mathbf{y}=\big(\varrho_{0}^{(3)}( \beta_{n,r}^{(b)}),\cdots, \varrho_{0}^{(m)}( \beta_{n,r}^{(b)}) \big)}    \bigg\|_\infty \nonumber
 \\ &\,\leq\,\big| Y^{(n,r)}-R_b^{(2)}(r) \big|\sup_{ \substack{ x, \|\mathbf{y}\|_{\infty}\leq \epsilon \\ n\in \mathbb{N}  }}\Big\|\frac{d}{dx} F_{1}^{(x)}\circ\cdots     \circ F_{n}^{(x)}(\mathbf{y})\Big\|_{\infty}\,,\label{diz}
\end{align}
where the equality is from~(\ref{Diffle}).  The supremum on the bottom line is finite by part (v) of Lemma~\ref{LemFunGeneral}.  Thus~(\ref{diz}) goes to zero since $Y^{(n,r)}$ converges to $R_b^{(2)}(r)$.  The second term on the right side of~(\ref{Yuigle}) converges to zero by part (i) of Lemma~\ref{LemFunGeneral}.  Thus the vector $\big(\varrho_{n}^{(3)}( \beta_{n,r}^{(b)}),\cdots,\varrho_{n}^{(m)}( \beta_{n,r}^{(b)})\big)$ converges with large $n$ to $\vec{H}_m\big(R_b^{(2)}(r)\big)$.  \vspace{.3cm}

\noindent Part (ii).   By the above analysis,
$$R_b^{(m)}(r)\,=\,H_m\big(R_b^{(2)}(r)\big) \,. $$
 The result then follows from part (ii) of Lemma~\ref{LemFunGeneral}.

\vspace{.3cm}

\noindent  Part (iii). This result follows from part (iii) of Lemma~\ref{LemFunGeneral} since
 \begin{align*}
  P_m\big(R_b^{(2)}(r),\cdots , R_b^{(m)}(r)\big)\,=\,&  P_m\big(R_b^{(2)}(r), \vec{H}_m\big( R_b^{(2)}(r)\big)\big) \\ \,=\,& H_m\big(M_b\big(  R_b^{(2)}(r)\big)\big)\\  \,=\,& H_m\big(  R_b^{(2)}(r+1)\big)\,=\,   R_b^{(m)}(r+1)  \,. 
  \end{align*}

\vspace{.3cm}

\noindent Part  (iv).  Since  $P_m(\mathbf{y})=\frac{1}{b^{m-2}} y_m+ y_mU_m(\mathbf{y})+V_m(\mathbf{y})$, I can iterate part (ii) to get
\begin{align}
 R_b^{(m)}(r)\,=\,& \sum_{k=1}^{n-1}V_m\Big(R_b^{(2)}(r-k),\cdots, R_{b}^{(m-1)}(r-k)\Big)\nonumber \\  &  \times \prod_{j=1}^{k-1}\bigg(\frac{1}{b^{m-2}}\,+\, U_m\Big(R_b^{(2)}(r-j),\cdots, R_b^{(m)}(r-j)\Big)\bigg)\,
\nonumber \\ & \,+\, R_b^{(m)}(r-n) \prod_{j=1}^{n-1}\bigg(\frac{1}{b^{m-2}}\,+\, U_m\Big(R_b^{(2)}(r-j),\cdots, R_b^{(m)}(r-j)\Big)\bigg) \,.\label{Rexp}
\end{align}
Since $R_b^{(j)}(r)\searrow 0$ as $r\searrow -\infty $, and $U_m(\mathbf{y})\rightarrow 0$ as $\|\mathbf{y}\|_\infty\rightarrow  0$, the term on the bottom line of~(\ref{Rexp}) vanishes exponentially as $n\rightarrow \infty$ for large enough $-r>0$.

\vspace{.3cm}

\noindent Part (v).   Part (v) of Lemma~\ref{LemFunGeneral} yields
\begin{align*}
\big(R_b^{(3)\,'}(r),\cdots, R_b^{(m)\,'}(r)\big)\,=\,&\frac{d}{dr}\vec{H}_m\big(R_b^{(2)}(r)\big)\\ \,=\,&\sum_{k=1}^{\infty}  \prod_{j=1}^{k-1} (\mathbf{\widetilde{D}} \vec{P}_m)\Big(  M^{-j}_b(x)    ; \vec{H}_m\big( M^{-j}_b(x)  \big)    \Big)\\  &\times (\partial_1 \vec{P}_m)\Big(M^{-k}_b(x); \vec{H}_m\big(M^{-k}_b(x)\big)\Big)\frac{d}{dx}M^{-k}_b(x) \Big|_{x=R_b^{(2)}(r)}R^{(2)\,'}_b(r) \\
\,=\,&\sum_{k=1}^{\infty} \prod_{j=1}^{k-1} (\mathbf{\widetilde{D}} \vec{P}_m)\Big( R_b^{(2)}(r-j),\cdots, R_b^{(m)}(r-j)    \Big)\\  &\times  (\partial_1 \vec{P}_m)\Big( R_b^{(2)}(r-k),\cdots, R_b^{(m)}(r-k)   \Big) R^{(2)\,'}_b(r-k)\,.
\end{align*}

\end{proof}

\subsection{Asymptotics for the centered moments as $r\rightarrow -\infty$}\label{SecGaussian}

In this section, I will prove property (III) of Theorem~\ref{ThmMain}, which is a corollary of Theoerem~\ref{ThmAsy} below.  The following technical lemma is similar to Lemma~3.5 in~\cite{AC}, and the proof is placed in the appendix.


\begin{lemma}\label{LemMomBounds} Let $\beta_{n,r}^{(b)}$ be defined as in~(\ref{BetaForm}) and  fix $m\in \Z$ and $\lambda\in \R$.  There is a $C>0$ such that for all $r\in (-\infty,\lambda] $
\begin{align}\label{Rho}
\big|\varrho_ {k}^{(m)}\big(\beta_{n,r}^{(b)}\big)\big|\, \leq  \, C \Big(\varrho_ {k}^{(2)}\big(\beta_{n,r}^{(b)}\big)\Big)^{\frac{m}{2}}  \end{align}
for large enough $n\in \mathbb{N}$ and all $1\leq k\leq n $

\end{lemma}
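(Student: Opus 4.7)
My plan is to prove the lemma by induction on $m$. Set $x_k := \varrho_k^{(2)}(\beta_{n,r}^{(b)})$ and $\Phi_k := |\varrho_k^{(m)}(\beta_{n,r}^{(b)})|/x_k^{m/2}$. Cases $m \leq 2$ are immediate from the definitions. For $m \geq 3$, I assume the inductive bounds $|\varrho_k^{(m')}| \leq C_{m'}\, x_k^{m'/2}$ for all $m' \in \{2, \ldots, m-1\}$ and all $n, k, r$ in the allowed ranges, and seek a constant $C_m$ for which $\Phi_k \leq C_m$.

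The argument combines the recursion $\varrho_{k+1}^{(m)} = P_m(\varrho_k^{(2)}, \ldots, \varrho_k^{(m)})$ with the decomposition
$$P_m(\mathbf{y}) \,=\, \frac{y_m}{b^{m-2}} \,+\, y_m\, U_m(\mathbf{y}) \,+\, V_m(y_2, \ldots, y_{m-1})$$
from Proposition~\ref{PropPoly}, along with the uniform upper bound $x_k \leq R_b(\lambda) + o(1)$ supplied by Corollary~\ref{CorVar}. The monomials of $V_m$ have total weighted degree at least $m$, so the inductive hypothesis gives $|V_m(\varrho_k^{(2)}, \ldots, \varrho_k^{(m-1)})| \leq D_V\, x_k^{m/2}$ with $D_V$ depending only on $\lambda$ and $\{C_{m'}\}_{m'<m}$. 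The monomials of $U_m$ have weighted degree at least $2$, so under a running bound $\Phi_k \leq \widetilde{C}_m$ the inductive hypothesis yields $|U_m(\varrho_k^{(2)}, \ldots, \varrho_k^{(m)})| \leq \mathcal{P}(\widetilde{C}_m)\, x_k$ for a polynomial $\mathcal{P}$ of bounded degree with $\lambda$-dependent coefficients. Substituting and dividing by $x_{k+1}^{m/2} \geq x_k^{m/2}$ yields the one-step estimate
$$\Phi_{k+1} \,\leq\, \frac{\Phi_k}{b^{m-2}} \,+\, \widetilde{C}_m\, \mathcal{P}(\widetilde{C}_m)\, x_k \,+\, D_V.$$
Fixing $\widetilde{C}_m$ large enough that $\widetilde{C}_m\bigl(1 - b^{-(m-2)}\bigr) \geq 2 D_V$ and setting $x^* := \bigl(1 - b^{-(m-2)}\bigr)/\bigl(2\mathcal{P}(\widetilde{C}_m)\bigr)$, the fact that $b^{m-2}>1$ for $m \geq 3$ provides a contraction strong enough to propagate $\Phi_k \leq \widetilde{C}_m$ to $\Phi_{k+1} \leq \widetilde{C}_m$ whenever $x_k \leq x^*$. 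The base case $\Phi_0 \leq \widetilde{C}_m$ follows (after possibly enlarging $\widetilde{C}_m$) by Taylor expanding $\varrho_0^{(m)}(\beta) = \mathbb{E}[(e^{\beta\omega}/\mathbb{E}[e^{\beta\omega}]-1)^m] = \mathit{O}(\beta^m) = \mathit{O}(x_0^{m/2})$ uniformly for $r \leq \lambda$ as $n \to \infty$.

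The main obstacle is the regime $x_k > x^*$, where the contraction argument fails. I plan to exploit the asymptotic identification $x_k \approx R_b(r - (n-k))$ (from iterating Lemma~\ref{LemVar} backwards via $M_b(R_b(r))=R_b(r+1)$) to observe that the set $\{k \in [1,n] : x_k > x^*\}$ has cardinality bounded by a constant $N^* = N^*(\lambda, x^*)$ independent of $n$ and of $r \leq \lambda$. Letting $k_0$ denote the largest $k$ with $x_k \leq x^*$, propagation gives $|\varrho_{k_0}^{(m)}| \leq \widetilde{C}_m\, R_b(\lambda)^{m/2}$ and the inductive bound $|\varrho_{k_0}^{(m')}| \leq C_{m'}\, R_b(\lambda)^{m'/2}$ for $m' < m$. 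For $k \in (k_0, n]$, iterating the polynomial recursion $P_m$ at most $N^*$ times from these bounded inputs yields $|\varrho_k^{(m)}| \leq B$ for some fixed constant $B = B(\lambda, m)$. Since $x_k \geq x^*$ throughout this regime, $\Phi_k \leq B/(x^*)^{m/2}$, and the lemma follows by taking $C_m := \max\bigl(\widetilde{C}_m,\, B/(x^*)^{m/2}\bigr)$.
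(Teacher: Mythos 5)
Your proposal is correct and follows essentially the same route as the paper: strong induction on $m$, the decomposition $P_m(\mathbf{y})=b^{-(m-2)}y_m+y_mU_m+V_m$ from Proposition~\ref{PropPoly}, control of $V_m$ by $x_k^{m/2}$ via the inductive hypothesis, a contraction driven by the factor $b^{-(m-2)}<1$ in a regime where the $U_m$ contribution is small, and a finite number of extra applications of the recursion to cover the remaining indices. The only difference is bookkeeping: you delimit the contractive regime by the variance threshold $x^*$ and bound the cardinality of the bad set $\{k: x_k>x^*\}$, while the paper first proves the bound for $\lambda$ sufficiently negative (so contraction holds up to $k=n$) and then shifts $\lambda'=\lambda+N$, patching the last $N$ steps directly.
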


\begin{theorem}[Gaussian behavior for $-r\gg 1$]\label{ThmAsy}
The function $R_b^{(m)}(r)$  has the following $r\rightarrow -\infty$ asymptotics:
\begin{enumerate}[(i).]
\item For any $m\in \mathbb{N}$ with $m\geq 2$,  $  R_b^{(m)}(r) =\mathit{O}\big( |r|^{-\lceil \frac{m}{2} \rceil} \big) $.

\item  For even $m\in \mathbb{N}$, $  R_b^{(m)}(r)=\kappa_b^{m}\frac{ m!  }{ 2^{\frac{m}{2}}(\frac{m}{2})! }|r|^{-\frac{m}{2}} + \mathit{O}\big(  |r|^{-\frac{m}{2}-1} \big)   $.

\end{enumerate}

\end{theorem}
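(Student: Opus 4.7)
The plan is to prove both parts simultaneously by strong induction on $m$, with the base case $m=2$ already delivered by Lemma~\ref{LemVar}(III). The workhorse will be the series representation in Theorem~\ref{ThmMomConv}(iv):
$$R_b^{(m)}(r) = \sum_{k=1}^{\infty} V_m\bigl(R_b^{(2)}(r-k),\ldots, R_b^{(m-1)}(r-k)\bigr)\prod_{j=1}^{k-1}\biggl(\frac{1}{b^{m-2}} + U_m\bigl(R_b^{(2)}(r-j),\ldots, R_b^{(m)}(r-j)\bigr)\biggr).$$
Because $U_m$ is a polynomial with no constant term (Prop.~\ref{PropPoly}(i)) and each $R_b^{(j)}(r-j)\to 0$ as $r\to -\infty$ (Thm.~\ref{ThmMomConv}(ii)), the inner factors tend to $1/b^{m-2}<1$ uniformly for $-r$ large, so the series decays geometrically and its tail is negligible.

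For the inductive step I will substitute the induction hypothesis $R_b^{(2j)}(r-k)= c_{2j}^{\star}\bigl(R_b^{(2)}(r-k)\bigr)^{j}+\text{lower order}$ with $c_{2j}^{\star}:=(2j)!/(2^{j}j!)$ (odd-index moments giving only lower-order contributions) into the generating-function identity of Proposition~\ref{PropPoly}(iii). The combinatorial miracle is
$$\sum_{j\ge 0}\frac{c_{2j}^{\star}}{(2j)!}\,t^{2j}=e^{t^{2}/2},$$
so the truncated generating function differs from $e^{t^{2}/2}$ by a series starting at $t^{m}$. Writing $(A-B)^{b^{2}}=A^{b^{2}}-b^{2}A^{b^{2}-1}B+\cdots$ with $A=e^{t^{2}/2}$ and $B$ the tail starting at $t^{m}$, only the first two terms affect the $m$-th Taylor coefficient at the origin. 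A short calculation then gives, with $x:=R_b^{(2)}(r-k)$,
$$V_{m}(R_b^{(2)}(r-k),\ldots,R_b^{(m-1)}(r-k))\;=\;x^{m/2}\,\frac{m!}{2^{m/2}(m/2)!}\bigl(1-b^{2-m}\bigr)\;+\;\text{lower order},$$
for even $m$, while for odd $m$ the same proposition gives $V_{m}=O(x^{(m+1)/2})$, which will propagate directly through the geometric sum to the $O(|r|^{-(m+1)/2})$ bound of part (i).

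For even $m$, summing the leading contribution across $k$ is clean: since $R_b^{(2)}(r-k)\sim \kappa_b^{2}/|r-k|\sim \kappa_b^{2}/|r|$ uniformly for $k=o(|r|)$, and the tail is geometrically damped, the identity
$$\frac{m!}{2^{m/2}(m/2)!}\bigl(1-b^{2-m}\bigr)\cdot\sum_{k\ge 1}\frac{1}{b^{(m-2)(k-1)}}\;=\;\frac{m!}{2^{m/2}(m/2)!}\cdot\frac{b^{m-2}-1}{b^{m-2}}\cdot\frac{b^{m-2}}{b^{m-2}-1}\;=\;\frac{m!}{2^{m/2}(m/2)!}$$
telescopes to exactly the Gaussian moment, yielding $R_b^{(m)}(r)\sim \kappa_{b}^{m}\frac{m!}{2^{m/2}(m/2)!}|r|^{-m/2}$ as required.

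The main obstacle is the error bookkeeping. There are four sources of correction that must each be shown to lie within the claimed $O(|r|^{-m/2-1})$ (respectively $O(|r|^{-(m+1)/2})$) budget: (a) the $O(x)$ remainder in Proposition~\ref{PropPoly}(iii) after substituting the inductive asymptotics for $y_{2j}$; (b) the subleading deviation $R_b^{(2)}(r-k)-\kappa_{b}^{2}/|r-k|$, which contains the logarithmic term $\kappa_{b}^{2}\eta_{b}\log|r-k|/(r-k)^{2}$ from Lemma~\ref{LemVar}(III); (c) the $U_{m}$-factors in the product, which perturb the geometric constant $1/b^{m-2}$; and (d) the $k\gtrsim |r|$ tail, where the uniform approximation $R_b^{(2)}(r-k)\approx \kappa_{b}^{2}/|r|$ fails. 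Each is controllable by the induction hypothesis together with the geometric damping $(1/b^{m-2}+\varepsilon)^{k-1}$ and the polynomial boundedness of the partial derivatives of $U_{m}$ and $V_{m}$, but tracking these uniformly---particularly combining (b) and (d) into a single $O(|r|^{-m/2-1})$ remainder---will be the most delicate part of the argument.
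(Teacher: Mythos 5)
Your core computation coincides with the paper's own proof: the paper also proves part (ii) by strong induction, feeding the Gaussian asymptotics of the lower even moments into the generating-function identity of Proposition~\ref{PropPoly}(iii) to get $V_m\big(R_b^{(2)}(r),\dots,R_b^{(m-1)}(r)\big)=\big(1-b^{2-m}\big)\tfrac{\kappa_b^m\, m!}{2^{m/2}(m/2)!}|r|^{-m/2}+\mathit{O}\big(|r|^{-m/2-1}\big)$, and then sums the series of Theorem~\ref{ThmMomConv}(iv) against the geometric factor $b^{-(m-2)(k-1)}$ so that $(1-b^{2-m})\sum_k b^{-(m-2)(k-1)}=1$; the odd case of (i) is likewise handled there by the weight bound of Proposition~\ref{PropPoly}(ii) pushed through the same series.

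The genuine gap is in your error source (c), which you claim is ``controllable by the induction hypothesis'': the product factors are $\tfrac{1}{b^{m-2}}+U_m\big(R_b^{(2)}(r-j),\dots,R_b^{(m)}(r-j)\big)$, and $U_m$ depends on $R_b^{(m)}$ itself, i.e.\ on the quantity you are trying to estimate, which lies outside your induction hypothesis (indices $<m$). Qualitative smallness of these factors (enough for geometric damping) does follow from Theorem~\ref{ThmMomConv}(ii), but to confine the perturbation of the geometric constant to the $\mathit{O}(|r|^{-m/2-1})$ budget you need a quantitative a priori bound on $R_b^{(m)}(r-j)$. The paper supplies exactly this as the first half of part (i), namely $R_b^{(m)}(r)=\mathit{O}\big(|r|^{-m/2}\big)$ for \emph{all} $m\ge 2$, and it obtains it not from the limit-level series but from the prelimit uniform moment bound of Lemma~\ref{LemMomBounds} (proved in the appendix, uniformly in $n$ and $k$), combined with the convergence of moments. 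Your plan never invokes that lemma, so as written the quantitative control of the $U_m$ factors is unsupported. The omission is repairable inside your framework: since $V_m$ involves only indices below $m$, a first pass through the same series using only the qualitative bound $\tfrac{1}{b^{m-2}}+U_m(\cdots)\le\delta<1$ already yields the crude estimate $R_b^{(m)}(r)=\mathit{O}\big(|r|^{-\lceil m/2\rceil}\big)$, which can then be fed back to quantify $U_m(\cdots(r-j))=\mathit{O}\big(|r-j|^{-1}\big)$ and close the bookkeeping --- but this two-step bootstrap (or an appeal to Lemma~\ref{LemMomBounds}) is a needed ingredient, not a detail, and should be stated explicitly. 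A smaller point to watch in the same bookkeeping: the base case from Lemma~\ref{LemVar}(III) carries a $\log(-r)/r^2$ correction, so the subleading terms you propagate in (b) are only $\mathit{O}\big(\log|r|\cdot|r|^{-m/2-1}\big)$ unless handled with care.
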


\begin{proof} Part (i): By Theorem~\ref{ThmMomConv}, I know that for any $r\in \R$ and $m\in \mathbb{N}$ as $n\rightarrow \infty$
$$\varrho_{n}^{(m)}\big(\beta_{n,r}^{(b)}\big)\quad \longrightarrow \quad R^{(m)}_b(r) \,.  $$
 From Lemma~\ref{LemMomBounds}, for any $m\in \{2,3,4,\cdots\}$ there is a $C>0$ such that for any $r<-1$ and large enough $n$
\begin{align}
\big|\varrho_{n}^{(m)}\big(\beta_{n,r}^{(b)}\big)\big|\, \leq  \, C \Big(\varrho_ {n}^{(2)}\big(\beta_{n,r}^{(b)} \big)\Big)^{\frac{m}{2}} \,.
\end{align}
By Lemma~\ref{LemVar} I have $R^{(2)}_b(r) = -\frac{\kappa_b^2}{r}+\mathit{o}\big( |r|^{-\frac{3}{2}}  \big)$ for $-r\gg 1$, and thus $R^{(m)}_b(r)\,=\,\mathit{O}\big( |r|^{-\frac{m}{2} } \big)$ for $m\geq 2$.

For $m$ odd I will employ a strong induction argument. Suppose for the sake of induction that there is a $c>0$ such that for all $r<-1$ and $i\in \{1,\cdots, m-1\}$
\begin{align*}
R_b^{(i)}(r)\,\leq \, \frac{c}{  |r|^{\lceil i \rceil}  }  \,.
\end{align*}
 By part (iii) of Theorem~\ref{ThmMomConv}, I can represent $ R_b^{(m)}(r)$ in the series form
\begin{align}\label{PertSeries}
 R_b^{(m)}(r)\,=\, \sum_{k=1}^{\infty}&V_m\Big(R_b^{(2)}(r-k),\cdots, R_{b}^{(m-1)}(r-k)\Big)\nonumber \\  &  \times \prod_{j=1}^{k-1}\bigg(\frac{1}{b^{m-2}}\,+\, U_m\Big(R_b^{(2)}(r-j),\cdots, R_b^{(m)}(r-j)\Big)\bigg)\,,
\end{align}
where $U_m$ and $V_m$ are defined as in Proposition~\ref{PropPoly}.  By Proposition~\ref{PropPoly}, $y_mU_m(y_2,\cdots,y_m)$ and $V_m(y_2,\cdots,y_{m-1})$ are linear combinations of monomials $y_{i_1}\cdots y_{i_\ell}$ with $i_{1}+\cdots +i_\ell\geq m+1$ since $m$ is odd.  It follows that there is a $\mathbf{c}>0$ such that
\begin{align}\label{Vikochev}
V_m\Big(R_b^{(2)}(r),\cdots, R_{b}^{(m-1)}(r)\Big)\,\leq \, \mathbf{c}|r|^{-\frac{m+1}{2}}\hspace{.3cm}\text{and}\hspace{.3cm}U_m\Big(R_b^{(2)}(r),\cdots, R_{b}^{(m)}(r')\Big)\,\leq \, \mathbf{c}|r|^{-\frac{m-1}{2}}\,.
\end{align}
I can apply~(\ref{Vikochev}) to~(\ref{PertSeries}) to get the bound
\begin{align*}
 R_b^{(m)}(r) \,\leq \, \sum_{k=1}^{\infty} \frac{\mathbf{c}}{|-r+k|^{ \frac{m+1}{2}} }  \prod_{j=1}^{k-1}\bigg(\frac{1}{b^{m-2}}+ \frac{\mathbf{c}}{|-r+j|^{\frac{m-1}{2}}}   \bigg)\,.
\end{align*}
The above is $\mathit{O}\big(|r|^{-\frac{m+1}{2}}  \big)$, which completes the induction argument.

\vspace{.5cm}

\noindent Part (ii): I will again apply a strong induction argument by assuming that (ii) holds 
for all even $j$ less than $m$. By part (iii) of Proposition~\ref{PropPoly},
\begin{align}
V_m\big(R_b^{(2)}(r),&\cdots, R_b^{(m-1)}(r)\big)\nonumber \\ \,=\,& \frac{1}{b^{m}}\frac{d^m}{dx^m}\Bigg( 1+\sum_{1\leq j < \frac{m}{2}} R_b^{(2j)}(r)\frac{x^{2j}}{(2j)!}  \Bigg)^{b^2}\Bigg|_{x=0}\,+\, \mathit{O}\Big( |r|^{- \frac{m}{2}-1} \Big)\nonumber \\
\,=\,& \frac{1}{b^{m}}\frac{d^m}{dx^m}\Bigg( 1+\sum_{1\leq j < \frac{m}{2}} \kappa_b^{2j}\frac{ (2j)!  }{ 2^j j!|r|^j }\frac{x^{2j}}{(2j)!}  \Bigg)^{b^2}\Bigg|_{x=0}\,+\, \mathit{O}\Big( |r|^{- \frac{m}{2}-1} \Big)\,.\nonumber 
\intertext{Completing the series with  the $j\geq \frac{m}{2}$ terms, yields  a nonzero contribution for only the $j=\frac{m}{2}$ term, so I can write    }
\,=\,& \frac{1}{b^{m}}\frac{d^m}{dx^m}\Bigg( 1+\sum_{j=1}^{\infty} \frac{ \kappa_b^{2j} }{ 2^j j! |r|^j}x^{2j}  \Bigg)^{b^2}\Bigg|_{x=0}\,-\,\frac{1}{b^{m-2}} \kappa_b^{2j}\frac{ m! }{ 2^{\frac{m}{2}} (\frac{m}{2})! |r|^{\frac{m}{2}}}\,+\, \mathit{O}\Big( |r|^{- \frac{m}{2}-1} \Big) \nonumber \\ 
\,=\,& \frac{1}{b^{m}}\frac{d^m}{dx^m} e^{b^2 \frac{\kappa_b^2x^2}{ 2|r|  }   } \Bigg|_{x=0}\,-\,\frac{1}{b^{m-2}} \frac{\kappa_b^{m} m! }{ 2^{\frac{m}{2}} (\frac{m}{2})! }|r|^{-\frac{m}{2}}\,+\, \mathit{O}\Big( |r|^{- \frac{m}{2}-1} \Big)
\nonumber \\
\,=\,&\bigg( 1\,-\,\frac{1}{b^{m-2}}   \bigg)\frac{\kappa_b^{m} m!}{ 2^{\frac{m}{2}} (\frac{m}{2})! |r|^{\frac{m}{2}} }  \,+\, \mathit{O}\Big( |r|^{- \frac{m}{2}-1} \Big)\,.\label{Haffle}
\end{align}
By~(\ref{PertSeries}) and~(\ref{Haffle}), I have the first equality below for $-r\gg 1$
\begin{align*}
 R_b^{(m)}(r)\,=\,& \sum_{k=1}^{\infty}\Bigg( \bigg( 1\,-\,\frac{1}{b^{m-2}}   \bigg)\frac{\kappa_b^{m} m!}{ 2^{\frac{m}{2}} (\frac{m}{2})! |-r+k|^{\frac{m}{2}} }  \,+\, \mathit{O}\Big( |-r+k|^{- \frac{m}{2}-1} \Big)\Bigg)\nonumber \\  &  \times \prod_{j=1}^{k-1}\bigg(\frac{1}{b^{m-2}}\,+\,  \mathit{O}\Big( |-r+j|^{- \frac{m}{2}-1} \Big)\bigg)\,\\
 \,=\,&  \bigg( 1\,-\,\frac{1}{b^{m-2}}   \bigg)\frac{\kappa_b^{m} m!}{ 2^{\frac{m}{2}} (\frac{m}{2})! |r|^{\frac{m}{2}} }\sum_{k=1}^{\infty}\Big(\frac{1}{b^{m-2}}\Big)^{k-1}   \,+\, \mathit{O}\Big( |r|^{- \frac{m}{2}-1} \Big)\\
  \,=\,& \frac{\kappa_b^{m} m!}{ 2^{\frac{m}{2}} (\frac{m}{2})! |r|^{\frac{m}{2}} }  \,+\, \mathit{O}\Big( |r|^{- \frac{m}{2}-1} \Big)\,,
\end{align*}
which completes the proof.

\end{proof}

\begin{appendix}

\section{Proof of Lemma~\ref{LemMomBounds}  }

\begin{proof}[Proof of Lemma~\ref{LemMomBounds}] I will apply a strong induction argument in $\mathbf{m}=2, 3, 4, \cdots$.    Note that the base case $m=2$ holds trivially. Now  let us assume for the  purpose of  induction that the statement of the lemma holds for all $m\in \{2,\dots, \mathbf{m}-1\}$, in other terms, there is a $c\equiv c(r,\mathbf{m})>0$ such that for any $r\in (-\infty,\lambda] $ and $m\in\{2,\cdots,\mathbf{m}\}$ the inequality
\begin{align}\label{Rho}
\big|\varrho_ {k}^{(m)}\big(\beta_{n,r}^{(b)}\big)\big|\, \leq  \, c \Big(\varrho_ {k}^{(2)}\big(\beta_{n,r}^{(b)}\big)\Big)^{\frac{m}{2}} 
 \end{align}
 holds for large enough $n\in \mathbb{N}$ and all $1\leq k\leq n $.

 By part (i) of Proposition~\ref{PropPoly}, I have the recursive inequality
\begin{align}
 \big|\varrho_ {k+1}^{(\mathbf{m})}\big(\beta_{n,r}^{(b)}\big) \big|\, < \,&\frac{1}{b^{\mathbf{m}-2}}\big|\varrho_ {k}^{(\mathbf{m})}\big(\beta_{n,r}^{(b)}\big)\big|\,+\,
 \big|\varrho_ {k}^{(\mathbf{m})}\big(\beta_{n,r}^{(b)}\big) \big|\,\Big|U_\mathbf{m}\Big(  \varrho_ {k}^{(2)}\big(\beta_{n,r}^{(b)}\big), \cdots, \varrho_ {k}^{(\mathbf{m})}\big(\beta_{n,r}^{(b)}\big)  \Big) \Big| \nonumber \\  &\,+\,\Big|V_\mathbf{m}\Big( \varrho_ {k}^{(2)}\big(\beta_{n,r}^{(b)}\big), \cdots,\varrho_ {k}^{(\mathbf{m}-1)}\big(\beta_{n,r}^{(b)}\big)  \Big)\Big|  \,.\label{Hugh}
\end{align}
  Since the variable $\omega$ has finite exponential moments,   I have that  
  \begin{align}\label{Yambell}
  \big|\varrho_ {0}^{(\mathbf{m})}\big(\beta_{n,r}^{(b)}\big)\big| \,=\,\Bigg|\mathbb{E}\Bigg[\bigg(\frac{ \exp\big\{\omega \beta_{n,r}^{(b)} \big\} }{\mathbb{E}\big[ \exp\big\{\omega \beta_{n,r}^{(b)} \big\}   \big]     }-1   \bigg)^{\mathbf{m}}\Bigg]\Bigg| =\mathit{O}\Big(\frac{1}{n^{\frac{\mathbf{m}}{2}}}   \Big)
  \end{align}
   is small with large $n$.   Since $\varrho_ {n}^{(2)}\big(\beta_{n,r}^{(b)}\big)$ converges with large $n$ to $R_b(r)$, and $R_b(r)=-\frac{\kappa_b^2}{r}+\mathit{o}(r^{-1})$ for $-r\gg 1$  by part (iii) of Lemma~\ref{LemVar}, there is a $-\widetilde{\lambda}_1>2\kappa_b^2$ large enough so that when $r\in (-\infty,\widetilde{\lambda}_1]$, then 
  \begin{align}\label{Gorg}
 \varrho_ {n}^{(2)}\big(\beta_{n,r}^{(b)}\big)\,<\,-\frac{2\kappa_b^2}{r}\,<\,1\,
\end{align} 
   holds for large enough $n$.

 By the induction assumption, the term $\big|V_\mathbf{m}\big(  \varrho_ {k}^{(2)}\big(\beta_{n,r}^{(b)}\big), \cdots, \varrho_ {k}^{(\mathbf{m}-1)}\big(\beta_{n,r}^{(b)}\big)   \big)\big| $  has the bound
   \begin{align}\label{zum}
  \Big|V_\mathbf{m}\Big(  \varrho_ {k}^{(2)}\big(\beta_{n,r}^{(b)}\big), \cdots, \varrho_ {k}^{(\mathbf{m}-1)}\big(\beta_{n,r}^{(b)}\big)   \Big)\Big| \,\leq &\,V_\mathbf{m}\bigg(   \varrho_{k}^{(2)}\big(\beta_{n,r}^{(b)}\big), \cdots,  \Big(\varrho_{k}^{(2)}\big(\beta_{n,r}^{(b)}\big)\Big)^{\frac{\mathbf{m-1}}{2}} \bigg) \nonumber \\  \,\leq  &\, \widehat{c}\big[\varrho_{k}^{(2)}\big(\beta_{n,r}^{(b)}\big)\big]^{\frac{\mathbf{m}}{2}} \,.
  \end{align}
 If $r\in (-\infty,\widetilde{\lambda}_1]$, then $\varrho_{k}^{(2)}\big(\beta_{n,r}^{(b)}\big)<1$ for large $n$ and the above  is bounded by a constant multiple  $\widehat{c} $ of $\big[\varrho_{k}^{(2)}\big(\beta_{n,r}^{(b)}\big)\big]^{\mathbf{m}/2}$ as a consequence of part (ii) of Proposition~\ref{PropPoly}.  
  
    If $ \big|\varrho_ {k}^{(\mathbf{m})}\big(\beta_{n,r}^{(b)}\big) \big|   \leq  \mathbf{x} $ for some $\mathbf{x}<1$,  the factor $\big|U_\mathbf{m}\big(  \varrho_ {k}^{(2)}\big(\beta_{n,r}^{(b)}\big), \cdots, \varrho_ {k}^{(\mathbf{m})}\big(\beta_{n,r}^{(b)}\big)  \big)\big|$ in~(\ref{Hugh}) has a bound of the form
\begin{align}\label{UINEQ}
\Big|U_\mathbf{m}\Big(  \varrho_ {k}^{(2)}\big(\beta_{n,r}^{(b)}\big), \cdots, \varrho_ {k}^{(\mathbf{m})}\big(\beta_{n,r}^{(b)}\big) \Big)\Big|\,\leq \, U_\mathbf{m}\bigg(   \frac{2\kappa_b^2}{|r|}, \cdots,  \Big(\frac{2\kappa_b^{2}}{|r|}\Big)^{\frac{\mathbf{m}-1}{2}}, \mathbf{x}  \bigg)  \,\leq \,\mathbf{c}\mathbf{x} \,+\, \frac{\mathbf{c}}{|r|}\,,
\end{align}
where the first inequality holds by~(\ref{Gorg}) for sufficiently large $n\in \mathbb{N}$ and all $1\leq k \leq n$ when $r\leq \widetilde{\lambda}_2$.  The second inequality holds for some $\mathbf{c}>0$ and all $\mathbf{x}<1$ and $r\in (-\infty, \widetilde{\lambda}_2]$ since the polynomial $U_{\mathbf{m}}$ has no constant term
 by (i) of Proposition~\ref{PropPoly}.  Pick some $\mathbf{x}$ small enough so  that 
\begin{align*}
\delta\,:=\, \mathbf{c}\mathbf{x}+\frac{1}{b^{\mathbf{m}-2}} \,<\,1 \,. 
\end{align*}
Let $\widehat{k}_n\in \mathbb{N}$ be the smallest value  such that $\big| \varrho_ {\widehat{k}_n}^{(\mathbf{m})}\big(\beta_{n,r}^{(b)}\big)\big|> \mathbf{x} $. 

  Define $\lambda= \min( \widetilde{\lambda}_1, \widetilde{\lambda}_2 )$. By the  bounds~(\ref{zum}) and~(\ref{UINEQ}), there is a $\delta\in (0,1)$ and a $C'>0$ such that for all $r\in (-\infty, \lambda)$ there is an  $n\in \mathbb{N}$ large enough so that for all $k<\min(n,\widehat{k}_n)$ 
\begin{align}\label{BowWow}
 \big| \varrho_ {k+1}^{(\mathbf{m})}\big(\beta_{n,r}^{(b)}\big)\big| \, \leq  \, \delta  \big|\varrho_ {k}^{(\mathbf{m})}\big(\beta_{n,r}^{(b)}\big)\big|\,+\,C'\Big(\varrho_{k}^{(2)}\big(\beta_{n,r}^{(b)}\big)\Big)^{\frac{\mathbf{m}}{2}}\,.
\end{align}
   Using~(\ref{BowWow}) recursively,  it follows that for $k\leq \min(n,\widehat{k}_n)$
\begin{align}
 \big|\varrho_ {k}^{(\mathbf{m})}\big(\beta_{n,r}^{(b)}\big)\big| \, \leq  & \, \delta^{k}\big|\varrho_ {0}^{(\mathbf{m})}\big(\beta_{n,r}^{(b)}\big)\big|\,+\,C' \sum_{j=0}^{k-1}\delta^{k-1-j}\Big(\varrho_{j}^{(2)}\big(\beta_{n,r}^{(b)}\big)\Big)^{\frac{\mathbf{m}}{2}}\nonumber \\  \,\leq & \, \mathit{O}\Big(\frac{1}{n^{\frac{\mathbf{m}}{2}}}   \Big)  \,+\,\frac{C'}{1-\delta}\left(\varrho_{k}^{(2)}\big(\beta_{n,r}^{(b)}\big)\right)^{\frac{\mathbf{m}}{2}}\nonumber \\ \,\leq &\, C\Big(\varrho_{k}^{(2)}\big(\beta_{n,r}^{(b)}\big)\Big)^{\frac{\mathbf{m}}{2}}\,.\label{HERE}
\end{align}
The second inequality holds by~(\ref{Yambell}) for the first term and  since $\varrho_{k}^{(2)}\big(\beta_{n,r}^{(b)}\big) $ is increasing with $k\in \mathbb{N}$ for the second term.  The third inequality holds for some $C>0$ since a multiple of $\big(\varrho_{k}^{(2)}\big(\beta_{n,r}^{(b)}\big)\big)^{\frac{\mathbf{m}}{2}}$ can be used to cover the error $\mathit{O}\big(n^{-\frac{\mathbf{m}}{2}}  \big) $.  To understand this, recall that $\varrho_{0}^{(2)}\big(\beta_{n,r}^{(b)}\big)=\kappa_b^2/n+\mathit{o}(1/n)$ for $n\gg1$ and, again, invoke that $\varrho_{k}^{(2)}\big(\beta_{n,r}^{(b)}\big) $ is increasing with $k$.  If $-\lambda>0 $ is large enough so that 
\begin{align}\label{DELTA}
\delta\,:=\, \mathbf{c}C\big(R_b(\lambda)\big)^{\frac{\mathbf{m}}{2}}+\frac{1}{b^{\mathbf{m}-2}} \,<\,1 \,, 
\end{align}
then for all $r\in (-\infty,\lambda)$ the inequality $\widehat{k}_n>n$ will hold for sufficiently large $n$ since $\varrho_{k}^{(2)}\big(\beta_{n,r}^{(b)}\big)$ converges to   $ R_b(\lambda)$ as $n\rightarrow \infty$.  Hence,~(\ref{HERE}) will hold for all $k\leq n$ when $n$ is large enough.

The above argument  proves the statement of the lemma is true for $\lambda\in \R$ sufficiently far in the negative direction so that~(\ref{DELTA}) holds.   For arbitrary $\lambda'\in \R$, pick $N\in \mathbb{N}$ large enough so that $\lambda =\lambda'-N $ satisfies~(\ref{DELTA}).  Then the reasoning above applies in the same way to show that $\widehat{k}_n> n-N$   for large $n$, and thus~(\ref{HERE}) will hold for all $1\leq k\leq n-N$.   The inequality can then be extended to $n-N< k\leq n$     through~(\ref{Hugh}).

\end{proof}

\end{appendix}

\end{document}